\def\section{\@startsection{section}{1}%
 \z@{.7\linespacing\@plus\linespacing}{.5\linespacing}%
 {\normalfont\bfseries\scshape\centering}}
\def\subsection{\@startsection{subsection}{2}%
  \z@{.5\linespacing\@plus\linespacing}{.5\linespacing}%
  {\normalfont\bfseries\scshape}}
\def\subsubsection{\@startsection{subsubsection}{3}%
 \z@{.5\linespacing\@plus\linespacing}{-.5em}
 {\normalfont\bfseries}}
\newtheorem{Theorem}{Theorem}[section]
\newtheorem{Proposition}[Theorem]{Proposition}
\newtheorem{Definition}[Theorem]{Definition}
\newtheorem{Corollary}[Theorem]{Corollary}
\def\qed{$\hfill{\vrule height 3pt width 5pt depth 2pt}$}
\def\qschoose{\atopwithdelims\llbracket \rrbracket}
\newcommand\bi[2]{{{#1}\atopwithdelims(){#2}}}
\newfont{\bbold}{msbm10 scaled \magstep1}
\newfont{\bbolds}{msbm7 scaled \magstep1}
\newcommand{\ns}{\mathbb{N}}
\newcommand{\qs}{\mathbb{Q}}
\newcommand{\HH}{H}
\newcommand{\JJ}{J}
\newcommand{\KK}{K}
\newcommand{\JI}{\mathcal J}
\newcommand{\KI}{\mathcal K}
\newcommand{\HI}{{\JI}_e}
\newcommand{\llp}{(\!(}
\newcommand{\rrp}{)\!)}
\newcommand{\ctA}{\tilde{\mathcal A}}
\newcommand{\ctB}{\tilde{\mathcal B}}
\newcommand{\ctC}{\tilde{\mathcal C}}
\newcommand{\ctD}{\tilde{\mathcal D}}
\newcommand{\cA}{\mathcal A}
\newcommand{\cB}{\mathcal B}
\newcommand{\cC}{\mathcal C}
\newcommand{\cD}{\mathcal D}
\newcommand{\cM}{\mathcal M}
\newcommand{\cN}{\mathcal N}
\newcommand{\cP}{\mathcal P}
\newcommand{\cS}{\mathcal S}
\newcommand{\cT}{\mathcal T}
\newcommand{\cW}{\mathcal W}
\newcommand{\cR}{\mathcal R}
\newcommand{\cL}{\mathcal L}
\DeclareMathOperator{\id}{id}
\DeclareMathOperator{\Pol}{Pol}
\newcommand{\beq}{\begin{equation}}
\newcommand{\eeq}{\end{equation}}
\newcommand{\gf}{generating function}
\newcommand{\gfs}{generating functions}
\newcommand{\al}{\alpha}
\newcommand{\be}{\beta}
\newcommand{\Ba}{B^{(\rm a)}}
\newcommand{\cBa}{\cB^{(\rm a)}}
\newcommand{\tBa}{\hat B^{(\rm a)}}
\newcommand{\Bna}{B^{(\rm na)}}
\newcommand{\cBna}{\cB^{(\rm na)}}
\def\emm#1,{{\em #1}}
\newcommand{\vO}{\check{O}}
\newcommand{\vcO}{\check{\mathcal{O}}}
\newcommand{\tA}{\tilde A}
\newcommand{\tB}{\tilde B}
\newcommand{\tC}{\tilde C}
\newcommand{\tD}{\tilde D}
\newcommand{\AL}{L}
\newcommand{\Ac}{A^{(\rm c)}}
\def\qchoose{\atopwithdelims[]}
\newcommand{\mbmChange}[1]{{{#1}}}
\newcommand{\bjnChange}[1]{{{#1}}}
\newcommand{\mbmChangenew}[1]{{{#1}}}
\begin{document}
\title[Length enumeration of fully commutative elements]
{Length enumeration of fully commutative elements\\in 
 finite and affine Coxeter groups}

\author[R. Biagioli]{Riccardo Biagioli}
\address{R. Biagioli, F. Jouhet, and P. Nadeau: Univ Lyon, Universit\'e Claude Bernard Lyon 1, CNRS UMR 5208, Institut Camille Jordan, F-69622 Villeurbanne Cedex, France}
\email{biagioli, jouhet, nadeau@math.univ-lyon1.fr}

\author[M. Bousquet-M\'elou]{Mireille Bousquet-M\'elou}
\address{M. Bousquet-M\'elou: CNRS, LaBRI, Universit\'e de Bordeaux, 
351 cours de la Lib\'eration, 33405 Talence, France}
\email{mireille.bousquet@labri.fr}

\author[F. Jouhet]{Frédéric Jouhet}

\author[P. Nadeau]{Philippe Nadeau}

\begin{abstract} An element $w$ of a Coxeter group $W$ is said to be \emm
fully commutative, if any reduced expression of $w$
can be obtained from any other by transposing adjacent pairs of
generators. These elements were described in 1996 by Stembridge in the
case of finite irreducible groups, and  more recently by Biagioli, Jouhet and Nadeau (BJN)
in the affine cases. We {focus here on}  the length enumeration of these
elements. Using a recursive description, BJN established for the
associated \gfs\  systems of non-linear $q$-equations. Here, we show
that an alternative recursive description leads to  explicit
expressions for these \gfs.
\end{abstract}
\date{\today}

\maketitle


\section{Introduction}
Let $(W,S)$ be a Coxeter system. An element $w$ of $W$ is said to be \emm
fully commutative, (or fc for short) if any reduced expression of $w$
can be obtained from any other by transposing adjacent pairs of
generators.  {For instance, in the finite or affine symmetric group,
fc elements  coincide with 321-avoiding
permutations~\cite{BJS,Gre321}.}
The description and enumeration of fully commutative
elements has been of interest in the algebraic combinatorics
literature for about 20 years, starting with the work of
Stembridge~\cite{St1,St2,St3}. 
We refer to the above papers and
 to~\cite{BJN-long} for {motivations of this topic}.
Stembridge classified 
 Coxeter groups having finitely many fc elements, and was able to count those elements in each case~\cite{St1,St3}. 

More recently, several authors got interested, not only in the number
of such elements, but also in their $q$-enumeration, where the
variable $q$ records  their Coxeter
length~\cite{BJN-long,HanJon}. For instance, in the symmetric group
$A_2$, all elements except the maximal permutation are fc, and their
length enumeration yields the polynomial
$$
A_2^{FC}(q)=1+2q+2q^2.
$$
This point of view naturally 
extends the study to {arbitrary} Coxeter groups $W$ (having
{possibly} infinitely many fc elements), since
they still have finitely many elements of given length. In this case  the length
enumeration of fc elements in $W$ gives rise to a power series rather than a
polynomial. We denote this series by $W^{FC}(q)$.

 In particular, three of the authors of the present paper (BJN) were able to
characterize, for all families of  classical finite or affine Coxeter groups
($A_n$, $\tA_n$, $B_n$, etc.), the series $W_n^{FC}(q)$, and in fact,
the bivariate \gf\ 
\beq\label{Wxq}
W(x,q):=
\sum_{n} W_n^{FC}(q) x^n,
\eeq
{by} systems of non-linear
$q$-equations~\cite{BJN-long}. For instance, in the 
 $A$-case, the system
can be reduced to a single {quadratic} $q$-equation for a series denoted
$M^*(x)\equiv M^*(x,q)$:
$$
M^*(x)=1+xM^*(x)+xq(M^*(x)-1)M^*(xq).
$$
Using these systems of equations, 
{BJN}  were able to prove that
$W_n^{FC}(q)$ is always a rational function of $q$, with simple poles at
roots of unity: this means that the coefficients of these series are
ultimately periodic~\cite{BJN-long,JN-periods}. This was first proved in the $\tA$-case by
Hanusa and Jones~\cite{HanJon}. {Subsequently, one of us (N)}
showed  that $W^{FC}(q)$ is in fact rational for any
{Coxeter group}~$W$, and determined when its coefficients are ultimately periodic~\cite{nadeau}.

The aim of this paper is to provide closed form expressions for 
\gfs\ of the form~\eqref{Wxq} for all classical Coxeter groups.
Some of them turn out to be particularly
elegant. Here are, for instance, our results in the $A$- and $\tA$-cases.
 For $n\ge 0$, we denote 
\beq\label{q-fact}
(x)_n\equiv(x;q)_n= (1-x)(1-xq)\cdots(1-xq^{n-1}).
\eeq

\begin{Theorem}\label{thm:A-tA}
Let  $A(x,q)\equiv A$ and $\tA(x,q)\equiv \tA$ be the \gfs\ of fully
commutative elements of type $A$ and $\tA$, respectively defined by
$$
A=\sum_{n\ge 0} A_n^{FC}(q) x^n \qquad\hbox{and} \qquad \tA=
\sum_{n\ge 1} \tA_{n-1}^{FC}(q) x^n. 
$$
Then 
$$
A= \frac 1 {1-xq}\frac{\JJ(xq)}{\JJ(x)}
\qquad\hbox{and} \qquad \tA=- x \frac{\JJ'(x)}{\JJ(x)}- \sum_{n\ge 1}
 \frac{x^n  q^n}{1-q^n},
$$
where $\JJ(x)$ is the following series:
$$
\JJ(x)=\sum_{n\ge 0} \frac{(-x)^n q^{n\choose 2}}{(q)_n(xq)_n}.
$$
\end{Theorem}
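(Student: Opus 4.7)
The plan is to exploit an alternative recursive description of fc heaps in type $A$ (as hinted in the abstract) to derive, via the substitution $A(x,q) = g(xq)/g(x)$, a \emph{linear} three-term $q$-difference equation for a companion series $g$, which can then be solved in closed form. Concretely, rewriting BJN's quadratic equation for $M^*$ using $M^*(x) = 1 + xA(x,q)$ gives the simpler relation
\[
(1 - x - xq)\, A(x) \;=\; 1 + x^2 q^2\, A(x)\, A(xq);
\]
under the substitution $A(x) = g(xq)/g(x)$ the quadratic cross-term collapses, since $A(x) A(xq) = g(xq^2)/g(x)$, and one obtains
\[
g(x) \;=\; (1 - x - xq)\, g(xq) \;-\; x^2 q^2\, g(xq^2), \qquad g(0) = 1.
\]

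The core of the proof is identifying the solution as $g(x) = (xq;q)_\infty\, J(x)$, which immediately yields $A(x,q) = J(xq) / ((1-xq)\, J(x))$. Substituting this candidate into the linear recursion and factoring out $(xq;q)_\infty$ reduces the task to the $q$-contiguous identity
\[
(1-xq)(1-xq^2)\, J(x) \;=\; (1-x-xq)(1-xq^2)\, J(xq) \;-\; x^2 q^2\, J(xq^2).
\]
I would prove this identity term-by-term from the defining sum $J(x)=\sum_n a_n(x)$ with $a_n(x) = (-x)^n q^{\binom{n}{2}} / ((q)_n (xq;q)_n)$, using the elementary recurrence $(1-q^n)(1-xq^n)\, a_n(x) = -xq^{n-1}\, a_{n-1}(x)$: after telescoping cancellations both sides of the identity reduce to the same series. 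This is the main technical step.

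For the affine case $\tilde A$, the strategy is a cycle-lemma / logarithmic-derivative argument. Telescoping $A(xq^k) = g(xq^{k+1})/g(xq^k)$ over $k\ge 0$ yields the product formula
\[
\prod_{k\ge 0} A(xq^k) \;=\; \frac{1}{g(x)} \;=\; \frac{1}{(xq;q)_\infty\, J(x)}.
\]
Combined with the cyclic symmetry of the Dynkin diagram of $\tilde A_{n-1}$ (interpret each fc element as a cyclic heap of perimeter $n$; cutting at any of the $n$ positions produces a linear heap counted by $A$), a standard cycle-lemma computation produces, modulo a Burnside-type correction for fc heaps invariant under nontrivial cyclic rotation, the logarithmic-derivative expression
\[
\tilde A(x,q) \;=\; -x\,\frac{d}{dx}\log \frac{J(x)}{(xq;q)_\infty}.
\]
Expanding via $-x\, \frac{d}{dx} \log(xq;q)_\infty = \sum_{k\ge 1} xq^k / (1-xq^k) = \sum_{n\ge 1} x^n q^n / (1-q^n)$ rearranges this into the stated formula.

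The main obstacles are thus: (i) the $q$-contiguous identity for $J$, which is elementary in principle but whose cancellations must be tracked carefully; and (ii) for $\tilde A$, correctly accounting for periodic fc heaps in the cycle-lemma, which contribute precisely the correction term $-\sum_{n\ge 1} x^n q^n / (1-q^n) = x\,\frac{d}{dx}\log(xq;q)_\infty$ and explain why the answer is not simply $-xJ'(x)/J(x)$.
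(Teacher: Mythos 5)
Your finite-type argument is sound, and it takes a genuinely different route from the paper. You verify the closed form against the quadratic $q$-equation for $M^*$ quoted from~\cite{BJN-long}: the relation $(1-x-xq)A(x)=1+x^2q^2A(x)A(xq)$ does follow from $M^*=1+xA$, this equation determines $A$ uniquely as a power series in $x$ (say this explicitly: extracting $[x^n]$ gives $a_n$ in terms of $a_0,\dots,a_{n-1}$), and the substitution $A(x)=g(xq)/g(x)$ linearizes it correctly. The contiguous identity you need for $g(x)=(xq;q)_\infty\JJ(x)$ is exactly~\eqref{eqJ-lin} multiplied by $(1-xq^2)$, and it can indeed be proved by elementary manipulation of the defining sum (the paper does this via the auxiliary series $\HH$, through~\eqref{eq-JH} and~\eqref{eq-HJ-alt}). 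By contrast, the paper proves~\eqref{A-expr} by peeling the rightmost diagonal of the heaps and iterating, which \emph{constructs} $\JJ$ rather than checking a guessed answer; your route is shorter but presupposes both the answer and the $q$-equation of~\cite{BJN-long}. Either is legitimate --- the authors themselves note after Proposition~\ref{prop:MQ} that checking their equations is an admissible alternative.

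The affine half, however, is not a proof. The claim that cutting an alternating heap over the $n$-point cycle at any of its $n$ positions ``produces a linear heap counted by $A$'' is false: $A$ counts alternating heaps over a path with \emph{at most one point in the first and last columns}, whereas a cut cyclic heap has unrestricted end columns (these are counted by the series $\AL$ of Section~\ref{sec:column}), and the alternation condition between the two columns adjacent to the cut is an extra constraint that a bare cut forgets. With that, the ``standard cycle-lemma computation'' has no precise content as stated; moreover $\tA$ counts elements, not rotation orbits, so a Burnside-type correction is not obviously the right mechanism, and you give no argument that the rotation-periodic heaps contribute exactly $-\sum_{n\ge1}x^nq^n/(1-q^n)$, nor that the main term is $-x\,\frac{d}{dx}\log\bigl(\JJ(x)/(xq;q)_\infty\bigr)$. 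The telescoped product $\prod_{k\ge0}A(xq^k)=1/g(x)$ is suggestive but proves nothing about cyclic heaps. This missing step is precisely the hard part: in the paper, $\tA$ is obtained not combinatorially but by expressing $\vO$ and $\vO^*$ through $M$ and $M^*$, rewriting them via the $A$-result and the linear equation~\eqref{eqH-lin} for $\HH$, and then iterating the relation for $\tA(x)-\tA(xq)$ from~\cite{BJN-long}; a bijective explanation of the simple final form is deferred to~\cite{BBJN-bij}. To complete your proof you would need either a genuine transfer-matrix/trace argument (explicit column state space, the wrap-around alternation handled, the determinant identified with $(xq;q)_\infty\JJ(x)$), or simply to follow the paper's computational route for this half.
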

The $A$-result was already obtained by Barcucci \emm et al., in terms
of pattern avoiding permutations~\cite{barcucci}. The $\tA$-result is
new, though another
(much more complicated) expression of this series was given by Hanusa and
Jones~\cite{HanJon}.

We obtain similar results for fc involutions, already considered by
Stembridge in 1998~\cite{St3}, and for which non-linear $q$-equations
were given in~\cite{BJN-inv}. We denote by $\cW^{FC}(q)$ (with a
calligraphic~$\cW$) the
length \gf \ of fc involutions in a Coxeter group $W$, and for a
family $W_n$ of such groups, we consider the \gf
$$
\cW(x,q)=
\sum_n  \cW_n^{FC}(q) x^n.
$$
For $n\ge 0$, we denote
 
\beq\label{double}
\llp x \rrp _n\equiv (x;q^2)_n=(1-x)(1-xq^2) \cdots (1-xq^{2n-2}).
\eeq

\begin{Theorem}\label{thm:A-tA-inv}
Let  $\cA(x,q)\equiv \cA$ and $\ctA(x,q)\equiv \ctA$ be the \gfs\ of fully
commutative involutions of type $A$ and $\tA$, respectively defined by
$$
\cA=\sum_{n\ge 0} \cA_n^{FC}(q) x^n \qquad\hbox{and} \qquad \ctA=
\sum_{n\ge 1} \ctA_{n-1}^{FC}(q) x^n. 
$$
Then
$$
\cA
=\frac{\JI(-xq)}{\JI(x)}
\qquad\hbox{and} \qquad 
\ctA= -x\,
\frac{ \JI'(x)}{\JI(x)},
$$
with
$$
\JI(x)=\sum_{n\geq0}\frac{(-1)^{\lceil n/2\rceil} x^nq^{n \choose
    2}}
{\llp q^2 \rrp_{\lfloor n/2\rfloor}}.
$$
\end{Theorem}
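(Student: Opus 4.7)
The plan is to adapt the strategy used in this paper for Theorem~\ref{thm:A-tA} to the involution setting. The first step is to find an alternative recursive decomposition of fully commutative involutions in $A_n$, distinct from the one used by BJN in their involution paper, which produced a system of non-linear $q$-equations. By conditioning on a single well-chosen statistic — for instance the last nontrivial 2-cycle of the underlying 321-avoiding involution, or the position of a distinguished fixed point — I expect to obtain a \emph{scalar} $q$-difference equation for $\cA(x,q)$. Because involutions naturally contribute length in pairs (a 2-cycle produces a symmetric pair of simple reflections), the correct substitution is likely $x \mapsto xq^2$, yielding an equation of the shape
$$
\cA(x,q) = P(x,q) + Q(x,q)\,\cA(xq^2,q),
$$
with explicit polynomial coefficients $P$ and $Q$.

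The second step is to verify that the candidate $\JI(-xq)/\JI(x)$ satisfies the same equation. For this I would work from the series definition of $\JI$, splitting the sum by parity of $n$ to handle simultaneously the factor $\llp q^2 \rrp_{\lfloor n/2\rfloor}$ in the denominator and the sign pattern $(-1)^{\lceil n/2\rceil}$ in the numerator. This should produce a contiguous relation of the form
$$
\JI(x) = R(x,q)\,\JI(xq^2) + S(x,q)\,\JI(-xq),
$$
from which the functional equation for the ratio $\JI(-xq)/\JI(x)$ follows by a short manipulation. Since $\cA(0,q)=1$ pins down a unique formal power series solution, this identifies $\cA$ with $\JI(-xq)/\JI(x)$.

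For the $\tA$ case, the shape $-x\,\JI'(x)/\JI(x) = -x\,\frac{d}{dx}\log \JI(x)$ is strongly suggestive of an affine or cyclic decomposition. Fully commutative involutions in $\tA_{n-1}$ should admit a decomposition based on a distinguished ``cut'' of the affine diagram, which at the generating-function level produces a logarithmic derivative — the cut records the cyclic symmetry that characterizes the passage from a finite type~$A$ enumeration to an affine one, much as \textbf{necklace}-type arguments produce $x\,d\log$ from their linear counterparts. I would then establish a functional identity linking $\ctA$ to $\cA$ and its derivative, which, once combined with the expression for $\cA$ obtained in the first two steps, yields the stated formula for $\ctA$. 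An alternative route is to derive a $q$-equation for $\ctA$ directly from a similar recursive decomposition in $\tA_{n-1}$ and verify it against $-x\JI'(x)/\JI(x)$ using the contiguous relation for $\JI$.

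The main obstacle is the first step: finding a recursive decomposition that produces a scalar (rather than system) $q$-difference equation \emph{and} whose structure is compatible with the parity alternation encoded in $\JI$. Once such a decomposition is in hand, the remainder reduces to standard manipulations of $q$-hypergeometric series and their contiguous relations, and the $\tA$ case becomes a short deduction from the $A$ case.
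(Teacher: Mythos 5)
Your plan has a genuine gap at its very first step, and that step is the whole content of the theorem. You assume that conditioning on the last nontrivial 2-cycle (or a distinguished fixed point) of a 321-avoiding involution yields a \emph{scalar} first-order equation $\cA(x,q)=P(x,q)+Q(x,q)\,\cA(xq^2,q)$. Decompositions of that kind split the involution (equivalently, its self-dual heap) at a point into two independent pieces, and therefore produce \emph{quadratic} relations --- exactly the non-linear system of~\cite{BJN-inv}, e.g.\ $\cM(x)=1+x^2q\,\cM(x)\cM(xq)$ --- which is precisely what one needs to escape from. Nothing in your proposal produces the linearization, and it is not even clear that rational $P,Q$ of the shape you posit exist: $\JI(-xq)/\JI(x)$ is a ratio of solutions of a second-order linear $q$-equation, so a priori it only satisfies a Riccati-type (non-linear) first-order relation, and your hoped-for ``contiguous relation'' $\JI(x)=R\,\JI(xq^2)+S\,\JI(-xq)$ with the required rationality is asserted, not derived. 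What the paper actually does is keep a catalytic variable $s$ marking the size of the rightmost NW--SE diagonal and peel the two extreme diagonals simultaneously (to preserve self-duality); this yields the \emph{linear} equation~\eqref{eq-A-inv}, in which the shift is $s\mapsto sq^2$ with $x$ fixed and in which the specialization $\cA(1)$ also occurs as an unknown. Iterating in $s$ and then setting $s=1$ gives $\JI(x)\,\cA=\JI(-xq)$ at once; no uniqueness-of-solution argument of the kind you invoke is needed, and none would apply to an equation containing $\cA(1)$.

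For the affine part, the necklace/$x\,\frac{d}{dx}\log$ heuristic is a guess rather than an argument, and the paper explicitly notes that the recursive (peeling) approach seems harder to implement on a cycle. Its proof of $\ctA=-x\,\JI'(x)/\JI(x)$ is computational: it expresses the auxiliary series $\vcO$ and $\vcO^*$ of~\cite{BJN-inv} through $\cM(x)=\HI(xq)/\HI(x)$ (Proposition~\ref{prop:Dyckwalks}) and the linear $q$-equation~\eqref{HI-eq}, and then telescopes the relation $\ctA(x)-\ctA(xq)=\vcO(xq)-1+\vcO^*(x)-\vcO^*(xq)$ coming from~\cite[Prop.~3.3]{BJN-inv}; the logarithmic derivative emerges from the collapse of this telescoping sum, not from a cyclic decomposition (a bijective explanation is deferred to~\cite{BBJN-bij}). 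So your claim that the $\tA$ case ``becomes a short deduction from the $A$ case'' is unsupported: you must either import and solve the path encodings of~\cite{BJN-inv}, as the paper does, or actually construct the cyclic decomposition you gesture at, which would be genuinely new work.
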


The story of this paper, and of the tools that we use, parallels the story
of the area enumeration of \emm convex polyominoes, (examples of such
objects appear in Figure~\ref{fig:fs}): early results on this
topic involved systems of non-linear $q$-equations arising from a
certain recursive description of these objects~\cite{bousquet-eq,delest-fedou,delest-fedou-diag-conv-dir}. These equations were solved
--- when they were solved --- by guessing and checking~\cite{delest-fedou,bousquet-fedou}. Then came a
method that could linearize some of them~\cite{prellberg-brak}. Finally, an alternative
recursive {description},
 pioneered as early as 1974 but overlooked for
two decades~\cite{klarner-rivest-conv}, led directly, in a constructive fashion, to
closed form
expressions~\cite{bousquet-vcd,owczarek-prellberg-sos}. This
{approach}  had been previously used to obtain \emm perimeter, (rather than area) \gfs\ of
polyominoes~\cite{temperley}. As we
shall see, fc elements of type 
$A$ are in fact very directly related to a  simple class of convex
polyominoes, called staircase (or: parallelogram) polyominoes.

\medskip
Here is now an outline of the paper. In Section~\ref{sec:heaps}, we
 recall Stembridge's description of fc elements in terms of \emm
heaps, (or partially commutative words), in the sense of
Viennot~\cite{viennot1}. 
Then the key point in the enumeration of
fc elements in families of irreducible finite or affine Coxeter groups
becomes the enumeration of the so-called \emm alternating heaps, over a path or a
cycle (Figure~\ref{fig:Alter}). 

In Section~\ref{sec:finite} we state our results for the finite
groups $A_n$, $B_n$ and $D_n$. The forms of the $B$- and $D$-series
are similar to those of the $A$-series shown in
{Theorems}~\ref{thm:A-tA} and~\ref{thm:A-tA-inv}. To obtain these results, the key
 problem is to count alternating heaps over a path \emm having
at most one piece in the rightmost column,. We solve this problem in
Section~\ref{sec:recursive} using a recursive approach, and this
yields the finite type results
stated in Section~\ref{sec:finite}.

In Section~\ref{sec:At} we address the $\tA$-case, which boils down to
counting alternating heaps over a cycle. There, our recursive
approach seems harder to implement. Instead, we start from the
$q$-equations of~\cite{BJN-long} describing  the $A$- and $\tA$-cases,
and use our results
for the $A$-case to solve them. We thus obtain the nice expressions of
{Theorems}~\ref{thm:A-tA} and~\ref{thm:A-tA-inv}. 

The other affine cases ($\tB, \tC$ and $\tD$) require to count
alternating heaps over a path, this time with no condition on the rightmost
column. The recursive approach of Section~\ref{sec:recursive} would
yield very heavy 
expressions. We use instead an alternative one, described in
Section~\ref{sec:column}: it yields simpler expressions, but they involve
positive \emm and negative, powers of $q$. This phenomenon has in fact
been witnessed in polyomino
enumeration already~(see~\cite{feretic2,feretic} and~\cite[Ex.~5.5.2]{goulden-jackson}).
We then derive from our results on alternating heaps the \gfs\ of fc
elements in the affine cases $\tB, \tC$ and $\tD$ (Section~\ref{sec:infinite}).

{We have checked all our expressions using the package {\sc
    GAP}. Details on this procedure are given after {Theorem}~\ref{thm:ABD}.}

\section{Fully commutative elements and heaps of generators}
\label{sec:heaps}

 Let $M$ be a square symmetric matrix indexed by a finite set $S$,
 satisfying $m_{ss}=1$ and, for $s\neq t$,
 $m_{st}=m_{ts}\in\{2,3,\ldots\}\cup\{\infty\}$. The {\em Coxeter
   group} $W$ associated with the  
 matrix $M$ is defined by
 {its set $S$ of} generators and by the following
 relations: all generators are reflections ($s^2=1$ for all $s\in S$),
and they satisfy 
 \emph{braid relations}:
\beq\label{braid}
\underbrace{sts\cdots}_{m_{st}}  = \underbrace{tst\cdots}_{m_{st}} \quad\text{if } m_{st}<\infty.
\eeq
 When $m_{st}=2$, the braid relation reduces to a  \emph{commutation
   relation} $st=ts$.  {Note that what we mean here by ``Coxeter group''
 is $W$ plus its presentation in terms of  $S$ and $M$. We refer
 to~\cite{bjorner-brenti-book,humphreys} for classical textbooks on this topic.}

The {\em Coxeter graph} 
 associated to $W$
 is the graph $\Gamma$ with vertex set $S$ and, for each pair $\{s,t\}$
with $m_{st}\geq 3$, an edge between $s$ and $t$. This edge is  labeled
by $m_{st}$ if $m_{st}>3$  and left unlabeled if $m_{st}=3$. Two
generators that are not joined by an edge commute. Note that
the graph completely characterizes the group. We will
consider here the main infinite families of finite and affine Coxeter
groups, whose 
Coxeter graphs are shown in Figure~\ref{fig:dynkin} 

\begin{figure}[!ht]
\includegraphics[width=\textwidth]{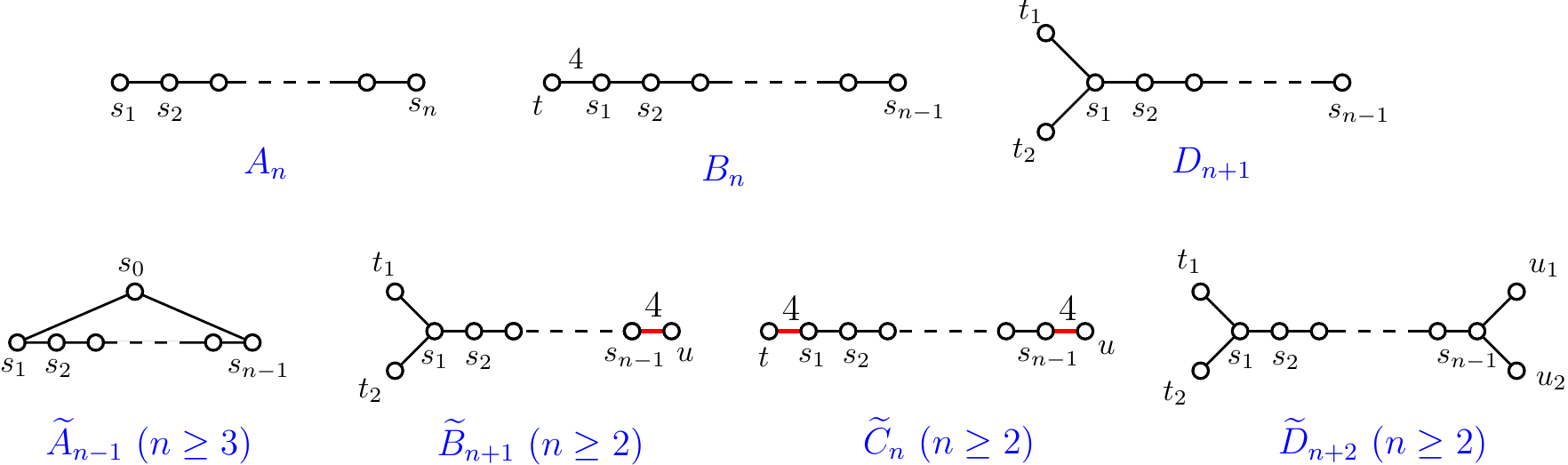}
\caption{Coxeter graphs for classical types.\label{fig:dynkin}}
\end{figure}

For $w\in W$, the {\em length} of $w$, denoted by $\ell(w)$, is the
minimum length $l$ of any expression $w=s_1\cdots s_l$ with $s_i\in
S$. The expressions of $w$ of length $\ell(w)$ are called
\emph{reduced}.
 A fundamental result in Coxeter group theory, sometimes called
the {\em Matsumoto property},  is that any reduced expression of $w$
 can be obtained from any other 
 using only the braid relations~\eqref{braid}. 

\begin{Definition}
An element $w\in W$ is \emph{fully commutative} (fc) if any reduced
expression 
of  $w$ can be obtained from any other 
 by using only commutation relations.
\end{Definition}

The set of words on the alphabet $S$, quotiented by the commutation
relations $st=ts$ for $s$ and $t$ such that $m_{st}=2$, forms a \emm partially commutative
monoid, in the sense of Cartier and Foata~ \cite{cartierfoata}. Its
elements, which are the commutation classes,  can be represented as special posets called, in Viennot's terminology~\cite{viennot1},
 \emm heaps over the Coxeter graph $\Gamma$,. For instance, the first
 heap of Figure~\ref{fig:Alter} represents the commutation class of the word
 $s_4s_3s_1s_2s_1s_2 s_5s_5s_5s_4s_3s_5s_2s_4s_6s_6s_1$. The
 correspondence between the word and the poset is rather intuitive,
 but let us still recall the
 definition   of these heaps~\cite[Def.~2.1]{viennot1}.

\begin{Definition} \label{defheap} 
A heap $(H,\prec,\epsilon)$ over the graph $\Gamma$ is a partially ordered set (poset) $(H,\prec)$ and
a labeling function $\epsilon:H\to S$ such that elements of $H$ labeled $s$
(resp. labeled $s$ or $t$) form a chain  for any $s\in S$
(resp. for any $s,t$ that are adjacent in $\Gamma$). \mbmChangenew{This
  chain is denoted $H_s$ (resp. $H_{st}$)}.
 Moreover these chains must
induce the ordering $\prec$ of~$H$. 

We consider heaps up to isomorphism, that is to say poset isomorphisms
which preserve the labeling functions.
\end{Definition}

We will call  \emph{points} the elements of $H$, each point being
labeled with a generator  $s$ of $S$. In our figures, all points with
label $s$ are placed on a vertical line above $s$ (Figure~\ref{fig:Alter}). We let $|H|_s$ denote the number of
points labeled $s$, and $|H|=\sum_{s\in S}|H|_s$  the total number
of points, also called \emm size, of $H$. Of course, this is the
length of the associated word on $S$.

In order to discuss  fc involutions, we will need to define {\em self-dual heaps}: a heap $(H,\prec,\epsilon)$ is
self-dual if it is isomorphic to $(H,\succ,\epsilon)$. {An
  example is shown on Figure~\ref{fig:A-inv}.}

Now take $w$ in the group $W$. Saying that $w$ is fc means
  precisely that all its reduced expressions correspond to the same
  heap $H$. We will often say that $H$ itself is fully commutative,
  and identify $w$  with the  heap $H$. 
 A characterization of fc heaps for an arbitrary Coxeter graph was
 given by  Stembridge~\cite[Prop.~3.3]{St1}. Underlying  this characterization is the notion of
 \emph{alternating heaps}.

\begin{Definition}\label{def:altheaps}
A heap $H$ over $\Gamma$ is \emm alternating, if, for all $s$ and $t$ that are adjacent in
$\Gamma$, the points of  the chain
$H_{st}$  are \emm alternatingly, labeled $s$ and $t$.
\end{Definition}

\begin{figure}[ht]
\begin{center}
\includegraphics[scale=0.7]{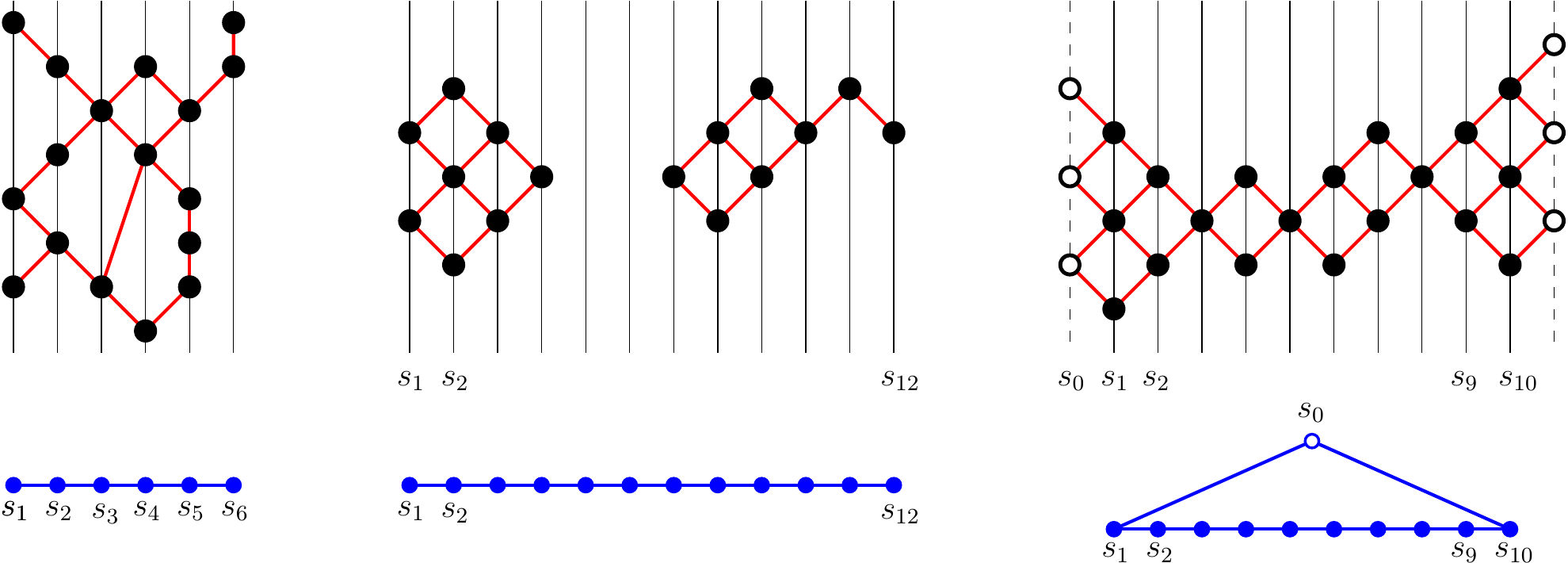}
\caption{Left: A heap over the 6-point path. Center: An alternating
  heap over the 12-point path. Right: an alternating heap over the 11-point cycle.} 
\label{fig:Alter}
\end{center}
\end{figure}

In this paper we  will consider alternating heaps over a path and
over a cycle (Figure~\ref{fig:Alter}, center and right). Then the chains
$H_{st}$  form increasing zigzag paths between the points of
neighboring columns. In the following sections, we do not draw these
paths, since they can be read off from the positions of points; see
for instance Figure~\ref{fig:A}.

\medskip
{We finish this section with some standard algebraic
  notation. For a ring $R$, we denote by $R[x]$ 
the ring of
polynomials 
in $x$ with coefficients in
$R$. If $R$ is a field, then $R(x)$ stands for the field of rational functions
in $x$. This notation is generalized to several variables in the usual
way.}

\section{Generating functions for finite types}
\label{sec:finite}
In this section, we state our results for fully commutative elements,
and then for fully commutative involutions, in Coxeter groups of finite
type. They will be proved in the next section.

\subsection{All fully commutative elements}
{Their} \gfs\ will be expressed in terms of two series $\JJ(x)$ and  $\KK(x)$,
defined~by
\beq\label{JK-def}
\JJ(x)=\sum_{n\ge 0} \frac{(-x)^n q^{n\choose 2}}{(q)_n(xq)_n}, \qquad\qquad
\KK(x)= \sum_{n\ge 0} \frac{x^n q^{n+1\choose 2} }{(xq)_{n}}\sum_{k=0}^{n}\frac{(-1)^k}{(q)_k},
\eeq  
{where the notation $(x)_n$ is defined by~\eqref{q-fact}.}

\begin{Theorem}\label{thm:ABD}
  Let $A(x,q)\equiv A$, $B(x,q)\equiv B$, and $D(x,q)\equiv D $ be the
  \gfs\  of fully commutative elements of types $A$, $B$ and $D$, defined
  respectively by:
$$
A=\sum_{n\ge 0} A_n^{FC}(q) x^n, \qquad B=\sum_{n\ge 0} B_n^{FC}(q)
x^n, \qquad D=\sum_{n\ge 0} D_{n+1 }^{FC}(q) x^n.
$$
Then:
\beq\label{A-expr}
A= \frac 1 {1-xq}\frac{\JJ(xq)}{\JJ(x)},
\eeq
$$
B= \frac {\KK(x)}{\JJ(x)} + \frac{xq^2(1-x)}{(1-xq)(1-xq^2)} \frac{\JJ(xq)}{\JJ(x)}- \frac{xq^2}{1-xq^2},
$$
and
$$
D= 
\frac {2\KK(x)}{\JJ(x)} + \frac{q-(1+q)x+x^2q^2}{(1-xq)(1-xq^2)}
\frac{\JJ(xq)}{\JJ(x)}- \frac{q}{1-xq^2}-1, 
$$
where the series $\JJ$ and $\KK$ are defined by~\eqref{JK-def}.
\end{Theorem}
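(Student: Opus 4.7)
My strategy is to pass through the heap interpretation of Section~\ref{sec:heaps}. For each of types $A_n$, $B_n$, $D_n$, Stembridge's characterization of fc heaps together with the Coxeter graphs of Figure~\ref{fig:dynkin} should reduce fc elements to alternating heaps over the path $P_n$ on $n$ vertices, possibly equipped with a controlled perturbation at the distinguished end of the diagram (the edge labeled~$4$ for $B_n$, the branching for $D_n$). Since the Coxeter length of an element equals the size of its heap, computing $A$, $B$, $D$ reduces to length enumeration of such alternating heaps.

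The central technical task, as announced in the introduction, is to count alternating heaps over $P_n$ with at most one piece in the rightmost column. I would introduce generating functions $\M^{(k)}(x,q)$ for $k\in\{0,1\}$, where $x$ marks the number of columns, $q$ the total size, and $(k)$ fixes the rightmost column to have exactly $k$ points. Appending a new rightmost column to a heap whose current rightmost column has $l$ points requires the new points to interlace with the old ones, which forces the new column to have between $0$ and $l+1$ points with a $q$-weight coming from the relative heights. Restricting the added column to contain at most one point gives a $q$-linear recursion on the $\M^{(k)}$ of kernel-method flavor, whose unknown involves $\M^{(l)}$ for all $l$. Iterating this recursion, in the spirit of the ``alternative recursive description'' for convex polyominoes alluded to in the introduction, should produce the two basic series $\JJ(x)$ and $\KK(x)$ of~\eqref{JK-def}: $\JJ(xq)/\JJ(x)$ captures $\M^{(0)}$ and $\KK(x)/\JJ(x)$ captures $\M^{(1)}$. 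A final geometric summation over a distinguished piece placed in the rightmost column then yields formula~\eqref{A-expr} for $A$.

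For types $B$ and $D$ the same building blocks $\JJ$ and $\KK$ should suffice, the expressions in the theorem being obtained by combining the $\M^{(k)}$'s with explicit boundary contributions at the distinguished end of the diagram. In type $B_n$, the edge labeled~$4$ tolerates one extra alternation at that end, which would produce the rational prefactor $\frac{xq^2(1-x)}{(1-xq)(1-xq^2)}$ in front of $\JJ(xq)/\JJ(x)$; the term $\KK(x)/\JJ(x)$ accounts for heaps whose rightmost column at the generic end contains at least one piece, while $-xq^2/(1-xq^2)$ removes the overcounting of heaps supported on a strictly shorter path. Type $D_n$ should admit a parallel decomposition, the factor~$2$ in front of $\KK(x)/\JJ(x)$ coming from the two symmetric choices at the branching vertex, and the constant $-1$ correcting the identity element. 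The main obstacle I foresee is identifying the right column-by-column recursion that both solves in closed form and is flexible enough to accommodate the boundary perturbations of $B$ and $D$: this is the analogue, for alternating heaps, of the recursive construction that revolutionized the area enumeration of convex polyominoes. Once the recursion is in hand, the rest is routine $q$-series manipulation, and all final formulas can be verified at low rank against direct computation with {\sc GAP}, as the authors indicate.
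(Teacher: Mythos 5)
Your high-level skeleton (heaps, reduction to alternating heaps over a path with at most one point in the rightmost column, a $q$-functional equation solved by iteration) is the same as the paper's, but the specific recursion you propose is not the one that produces $\JJ$ and $\KK$, and as stated it would not even count the right objects. You build the heap column by column, with a catalytic statistic equal to the number of points in the current rightmost column; this is the paper's \emph{second} approach (Section~\ref{sec:column}), and iterating the resulting equation yields the expressions of Theorem~\ref{thm:A-neg}, whose coefficients are Laurent polynomials involving negative powers of $q$ --- not $\frac{1}{1-xq}\,\JJ(xq)/\JJ(x)$ and $\KK(x)/\JJ(x)$. Asserting that this iteration ``should produce'' $\JJ$ and $\KK$ hides a genuinely nontrivial identification of two different-looking $q$-series which you never address. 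Moreover, ``restricting the added column to contain at most one point'' at every step would confine you to heaps all of whose columns have at most one point, whereas in type $A$ only the two extreme columns are restricted; the constraint can only be imposed at the boundary, so the full catalytic variable must be carried throughout. The paper's proof of \eqref{A-expr} instead peels the rightmost NW--SE \emph{diagonal}, the catalytic statistic being the size $i$ of the largest right factor $s_{n-i+1}\cdots s_n$; this gives \eqref{A-eq}, whose iteration constructs $\JJ$ directly, and (with an extra series $T(s)$ for the case where the diagonal is maximal) constructs $\KK$ in type $B$. That choice of statistic is the key missing idea.

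Your bookkeeping for $B$ and $D$ also does not match the actual combinatorics, and you would need structural inputs you do not supply. In type $B$, the term $\KK(x)/\JJ(x)$ is the \gf\ of \emph{all} alternating heaps with at most one point in the last column (the last column may be empty, and the $t$-column may contain arbitrarily many points), while the remaining two terms together are the \gf\ \eqref{Bna-A} of the \emph{non-alternating} fc elements, obtained by inflating a single point in the $s_j$-column into the $<$-shaped word $s_j s_{j-1}\cdots s_1 t s_1\cdots s_{j-1}s_j$; it is this inflation, not ``one extra alternation at the 4-labeled edge'', that produces the factor $xq^2/(1-xq^2)$, and the subtracted term is part of that same contribution rather than a correction for shorter supports. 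In type $D$ the formula is not obtained by a fresh decomposition at the branching vertex but from the identity \eqref{D-sol}, $D=2\Ba-1-xA+\Bna/(xq)$, imported from Proposition~4.6 of~\cite{BJN-long}; the factor $2$ and the corrective terms $-1-xA$ come from that identity, whose justification requires the structure theory of fc heaps of type $D$, not just the symmetry of the two branch generators. Both the recursion of Section~\ref{sec:recursive} and these $B$/$D$ reductions have to be in place before your plan yields the stated formulas.
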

\noindent{\bf Remarks}\\
1. Above, we have adopted the convention that
$B_0=A_0=\{\id\}$ and $B_1\,  {\cong}\, 
A_1=\{\id, s_1\}$, so that
$B_0^{FC}(q)= A_0^{FC}(q)=1 $ and $B_1^{FC}(q)= A_1^{FC}(q)=1+q $. For
$n=0$ or 1, the group $D_{n+1}$ is not well defined, and the coefficients
of $x^0$ and $x^1$ in $D(x,q)$ are irrelevant. {We could
  thus drop the term $-1$ in the expression of $D$. The only reason
  why it is there is to fit with the series $D^{FC}$
  of~\cite[Prop.~4.6]{BJN-long}. With this convention, $D_1^{FC}(q)=1$
  and $D_2^{FC}(q)=(1+q)^2$.}

\smallskip \noindent
{2. One can feed a computer algebra system with the above
  expressions and expand them in $x$ to compute the polynomials
  $W_n^{FC}(q)$ for small values of $n$. This gives for instance the
  values of Table~\ref{tab:finite}.

This also allows us to check our results, since we can compute
independently
 the series $W_n^{FC}(q)$ (or at least its first
coefficients) for  any given
  Coxeter group $W_n$:  according to~\cite[Sec.~6]{BJN-long}, fc elements of $W_n$ index a basis 
  of the so-called \emm nil-Temperley--Lieb algebra, associated with
  $W_n$. This algebra is graded, and in fact $W_n^{FC}(q)$ is its
  Hilbert series.  For small values of $n$ and $\ell$,  the package {\sc{GBNP}} of
  {\sc{GAP}} can compute a basis of the grade-$\ell$ component of the
  algebra: this is nothing but the list of fc elements of length $\ell$
  in $W_n$, and the number of such elements is the coefficient of
  $q^\ell$ in $W_n^{FC}(q)$. We can then select the involutions among
  these elements to
  determine the first terms of $\cW_n^{FC}(q)$.}

\begin{table}[htb]
  \centering
  \begin{tabular}{|c|ccc|}
 $n$ & 2 &3 & 4 \\
\hline &&&\\
$A_{n}^{FC}$    &[1, 2, 2] &[1, 3, 5, 4, 1]&[1, 4, 9, 12, 10, 4, 2]\\
$B_{n}^{FC}$ & [1, 2, 2, 2]&[1, 3, 5, 6, 5, 3, 1]&[1, 4, 9, 14, 16, 15, 11, 7, 3, 2, 1]\\
$D_{n+1}^{FC}$&[1, 3, 5, 4, 1]&[1, 4, 9, 13, 11, 7, 3]&[1, 5, 14, 26, 34, 32, 25, 17, 7, 4, 2]
  \end{tabular}
\vskip 3mm
  \caption{Length generating functions of fc elements in finite Coxeter
    groups. The list $[a_0, \ldots, a_k]$ stands for the polynomial
    $a_0+a_1q+\cdots + a_k q^k$.}
  \label{tab:finite}
\end{table}

\smallskip
\noindent {3. In the series $\JJ(x)$ and $\KK(x)$, the coefficient of $x^n$ is a
\emm rational function, of $q$ whose poles are roots of unity, whereas
in $A, B$ and $D$, the coefficient of $x^n$ is a \emm
polynomial, in $q$ (because there are finitely many fc elements in each
group $A_n, B_n$ and $D_n$). In Section~\ref{sec:column} we derive alternative
expressions of $A, B$ and $C$, in which the coefficient of $x^n$ appears
as a \emm Laurent polynomial, in $q$. See for instance
{Theorem}~\ref{thm:A-neg} for the $A$-case.}

\medskip
The next proposition clarifies the algebraic properties of the series
$\JJ$ and $\KK$. Both satisfy linear $q$-equations, of respective
orders 2 and 3.  By analogy
with the theory of D-finite series~\cite[Chap.~6]{stanley-vol2}, we
could say that $J$ and $K$  are \emm $q$-finite,. We refer to~\cite{chyzak-salvy}
for an algebraic treatment of such series in terms of Ore
algebras, {where the \emm $q$-shift, $F(x) \mapsto F(xq)$
  plays the role of a derivation. Thanks to the following
proposition,  one can conclude 
that, if $G(x)$ is any of the above \gf\ of fc elements, $G(x)\JJ(x)$ belongs to
a 3-dimensional vector space over $\qs(x,q)$ closed under the $q$-shift.}

To study the series $J$ and $K$,
it is convenient to introduce another series $\HH$,
closely related to~$\JJ$:
\beq\label{H-def}
\HH(x)= \sum_{n \ge 0} \frac{(-x)^n q^{n\choose 2}}{(q)_n(x)_n}.
\eeq

\begin{Proposition}\label{prop:JHK}
 The following identities hold:
\beq\label{eq-JH}
\JJ(x)=  ( 1-x ) H  ( x ) +xH  ( xq ),
\eeq
and
\beq\label{eq-HJ-alt}
\JJ(x)+\frac x{1-xq}\JJ(xq)=\HH(xq).
\eeq
Consequently, 
\beq\label{eq-HJ}
H  ( x ) =\JJ(x)- {\frac {{x}^{2}J  ( xq ) }{  ( 1-x ) (1- xq
 )   }},
\eeq
and the series $\HH(x)$ and $\JJ(x)$  satisfy
 second order linear $q$-equations:
\beq\label{eqH-lin}
( 1-x )  H ( x ) - ( 1-2x ) H ( xq ) +\frac{{x}^{2}q}{1-xq}H( x{q}^{2} ) =0,
\eeq
\beq\label{eqJ-lin}
  (1-xq)\JJ(x)-(1-x(1+q))\JJ(xq)+\frac{x^2q^2}{1-xq^2}\JJ(xq^2)=0.
\eeq
 The vector space (over $\qs(x,q)$) spanned by $\JJ(x)$
and its $q$-shifts $\JJ(xq), \JJ(xq^2), \ldots$ has dimension~$2$, and
coincides with the space spanned by all series $H(xq^i)$.

We also have
\beq\label{eq-KH}
K(x)-\frac{xq}{1-xq} K(xq)= H(xq),
\eeq
so that
 $K(x)$ satisfies a linear $q$-equation of order $3$:
$$
\KK(x)-\KK(xq)+\frac{xq^2}{1-xq^2}\KK(xq^2)-\frac{x^3q^6}{(1-xq)(1-xq^2)(1-xq^3)}
\KK(xq^3)=0.
$$ 
The vector space
spanned by the series $\KK(xq^i)$ over $\qs(x,q)$ has dimension $3$, and
contains all $q$-shifts of $\JJ$ and $H$ by~\eqref{eq-KH}.
\end{Proposition}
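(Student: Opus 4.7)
The proposition combines three kinds of claims: the four explicit identities \eqref{eq-JH}--\eqref{eq-HJ} and \eqref{eq-KH} relating $J$, $H$ and $K$; linear $q$-equations satisfied by $H$, $J$ and $K$; and dimension claims for the corresponding vector spaces over $\qs(x,q)$. The plan is to establish the identities first by direct term-by-term manipulation of the defining $q$-series, and then deduce the $q$-equations and dimension statements as algebraic consequences.

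For \eqref{eq-JH}, I compute $J(x) - (1-x)H(x)$ term by term. The relation $(x)_n = (1-x)(xq)_{n-1}$ (for $n \ge 1$) puts the two sums on a common denominator, and the elementary identity $\frac{1}{(xq)_n} - \frac{1}{(xq)_{n-1}} = \frac{xq^n}{(xq)_n}$ simplifies the $n$-th term. Combined with $n + \binom{n}{2} = \binom{n+1}{2}$ and the leading $x$ from the $n = 0$ case, this yields $J(x) - (1-x)H(x) = x\sum_{n \ge 0} \frac{(-x)^n q^{\binom{n+1}{2}}}{(q)_n (xq)_n} = xH(xq)$. Identity \eqref{eq-HJ-alt} is established similarly: the $n$-th term of $H(xq) - J(x)$ carries a factor $q^n-1$ which combines with $1/(q)_n$ to give $-1/(q)_{n-1}$ for $n \ge 1$; reindexing $n \to n+1$ and using $(xq)_{n+1} = (1-xq)(xq^2)_n$ yields $\frac{x}{1-xq}J(xq)$. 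Identity \eqref{eq-HJ} is then algebraic, and the linear $q$-equations follow by elimination: substituting the $q$-shift of \eqref{eq-JH}, namely $J(xq) = (1-xq)H(xq) + xq\,H(xq^2)$, into \eqref{eq-HJ-alt} and collecting terms in $H(x)$, $H(xq)$, $H(xq^2)$ yields \eqref{eqH-lin}, while equating the two expressions for $H(xq)$ provided by \eqref{eq-HJ-alt} and by the $q$-shift of \eqref{eq-HJ} yields \eqref{eqJ-lin}.

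For \eqref{eq-KH}, set $S_n := \sum_{k=0}^n (-1)^k/(q)_k$, so that $S_n - S_{n-1} = (-1)^n/(q)_n$. Shifting the summation index by one in $\frac{xq}{1-xq}K(xq)$, and using $(1-xq)(xq^2)_{n-1} = (xq)_n$ together with $n + \binom{n}{2} = \binom{n+1}{2}$, rewrites this series as $\sum_{n \ge 1} \frac{x^n q^{\binom{n+1}{2}}}{(xq)_n} S_{n-1}$. Subtracting from $K(x)$ telescopes the $S$-factors and leaves $\sum_{n \ge 0} \frac{(-x)^n q^{\binom{n+1}{2}}}{(q)_n (xq)_n}$, which one recognizes (after $(-xq)^n = (-x)^n q^n$) as $H(xq)$. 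The order-three equation for $K$ then follows by applying \eqref{eqH-lin} with $x \mapsto xq$ to the three relations $H(xq^{i+1}) = K(xq^i) - \frac{xq^{i+1}}{1-xq^{i+1}}K(xq^{i+1})$ for $i = 0, 1, 2$, collecting the resulting coefficients of $K(xq^j)$ for $j = 0, 1, 2, 3$, and dividing through by the common factor $(1-xq)$.

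Finally, the upper bounds on the dimensions ($2$ for $H$ and $J$, $3$ for $K$) are immediate from the respective $q$-equations, while matching lower bounds follow from direct linear-independence checks on the leading coefficients of the power series expansions in $x$ (two independent initial coefficients suffice for $H$ and $J$, three for $K$). Equality of the $J$- and $H$-spans comes from \eqref{eq-JH} and \eqref{eq-HJ}, each of which expresses a generator of one basis in terms of generators of the other; the containment of every $H(xq^i)$ in the $K$-span is immediate from \eqref{eq-KH}. The only delicate step I anticipate is the bookkeeping in the proofs of \eqref{eq-JH} and especially \eqref{eq-KH}, where the $n = 0$ boundary term must be tracked carefully through the various index shifts, but no identity beyond the elementary $q$-Pochhammer relations used above is required.
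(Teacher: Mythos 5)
Your treatment of the identities and of the linear $q$-equations is correct and is essentially the paper's own route: \eqref{eq-JH}, \eqref{eq-HJ-alt} and \eqref{eq-KH} by termwise manipulation of the $q$-series, then \eqref{eq-HJ}, \eqref{eqH-lin}, \eqref{eqJ-lin} and the third-order equation for $\KK$ by elimination, exactly as in the paper. The problem is the last step. You claim the dimension \emph{lower} bounds "follow from direct linear-independence checks on the leading coefficients of the power series expansions in $x$". This does not work, because the linear independence in question is over $\qs(x,q)$, not over $\qs(q)$: a dependence relation may have coefficients that are arbitrary rational functions of $x$, and any finite number of initial coefficients of, say, $\JJ(x)$ and $\JJ(xq)$ can be matched by a suitable rational (indeed polynomial) multiplier, so no finite coefficient check can rule out such a relation. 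Concretely, dimension $1$ for the $\JJ$-span means precisely that $\JJ(x)/\JJ(xq)$ is a rational function of $x$ and $q$, and refuting this requires a genuine argument: the paper writes $(1-xq)\JJ(x)/\JJ(xq)=N(x)/D(x)$ in lowest terms with $N(0)=D(0)=1$, plugs this into \eqref{eqJ-lin}, uses coprimality to force $D(x)=N(xq)$, and then a degree count shows the resulting polynomial $q$-equation $N(x)+x^2q^2N(xq^2)=(1-x(1+q))N(xq)$ has only the zero solution.

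The gap is even more serious for $\KK$. If the $\KK$-span had dimension $2$ (it contains the $\HH$-span by \eqref{eq-KH}, so dimension $\le 2$ would force exactly $2$), one would have $\KK(x)=\alpha(x)\HH(x)+\beta(x)\HH(xq)$ with $\alpha,\beta$ rational; combining \eqref{eq-KH} with \eqref{eqH-lin} and using the already-established independence of $\HH(x),\HH(xq)$ yields a linear $q$-difference equation for $\beta$, and one must prove it has \emph{no rational solution}. This is not a coefficient check either; the paper invokes Abramov's algorithm for rational solutions of linear $q$-difference equations. So your proposal is missing the entire mechanism for the lower bounds, which is the only genuinely delicate part of the proposition; everything before it is routine and you have it right.
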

\begin{proof}
 We begin with the first two identities:
\begin{align*}
  \JJ(x)-(1-x)\HH(x) &=  \sum_{n\ge 0}\frac{(-x)^nq^{n\choose
      2}}{(q)_n(xq)_n} - \sum_{n\ge 0}{ \frac{(-x)^{n}q^{n\choose
      2}(1-x)}{(q)_n(x)_{n}}}\\
&=  \sum_{n\ge 0}\frac{(-x)^nq^{n\choose
      2}}{(q)_n(xq)_n} \left(1-(1-xq^n)\right) = x \HH(xq),
\end{align*}
and
\begin{align*}
  \JJ(x)+\frac x{1-xq}\JJ(xq) &= \sum_{n\ge 0}\frac{(-x)^nq^{n\choose
      2}}{(q)_n(xq)_n} - \sum_{n\ge 0} \frac{(-x)^{n+1}q^{n+1\choose
      2}}{(q)_n(xq)_{n+1}}\\
&= \sum_{n\ge 0}\frac{(-x)^nq^{n\choose
      2}}{(q)_n(xq)_n} - \sum_{n\ge 1} \frac{(-x)^{n}q^{n\choose
      2}}{(q)_{n-1}(xq)_{n}}\\
&= \sum_{n\ge 0}\frac{(-x)^nq^{n\choose
      2}}{(q)_n(xq)_n} \left(1 -(1-q^n)\right) = \HH(xq).
\end{align*}
By eliminating $H(xq)$ between~\eqref{eq-JH} and~\eqref{eq-HJ-alt}
 we obtain~\eqref{eq-HJ}. Also, by eliminating $J(x)$ and $J(xq)$
 between~\eqref{eq-JH},~\eqref{eq-HJ-alt}, and the shifted version
 of~\eqref{eq-JH}, we obtain  the second order equation~\eqref{eqH-lin} satisfied by
 $H$. 
 By eliminating $H(xq)$ between~\eqref{eq-HJ-alt} and
 the shifted version of~\eqref{eq-HJ}, we obtain the linear equation~\eqref{eqJ-lin}
 satisfied by $J$.

 Let us now prove~\eqref{eq-KH}: 
\begin{align*}
 K(x)-\frac{xq}{1-xq} K(xq)&= \sum_{n\ge 0} \frac{x^n q^{n+1\choose
    2}}{(xq)_n} \sum_{k=0}^n \frac{(-1)^k}{(q)_k} 
- \sum_{n\ge 1} \frac{x^n q^{n+1\choose
    2}}{(xq)_n} \sum_{k=0}^{n-1} \frac{(-1)^k}{(q)_k} \\
&= \sum_{n\ge 0} \frac{(-x)^n q^{n+1\choose
    2}}{(xq)_n(q)_n}= H(xq).
\end{align*}
{Finally,} combining~\eqref{eq-KH} with {the shifted version of}~\eqref{eqH-lin} gives the third order equation
satisfied by $K$.

It remains to prove our results about dimensions.  If the space
spanned by the $q$-shifts of $J(x)$ had dimension 1 only,
$J$ would satisfy a first order linear equation, so
that $J(x)/J(xq)$ would be a rational function. Assume this is
the case, and write
$$
(1-xq) \frac{J(x)}{J(xq)}= \frac{N(x)}{D(x)},
$$
where $N$ and $D$ are polynomials in $x$ with coefficients in
$\qs(q)$,  with no common
factor. Dividing~\eqref{eqJ-lin} by $J(xq)$ then gives
$$
\frac{N(x)}{D(x)}+ x^2q^2 \frac{D(xq)}{N(xq)}= 1-x(1+q).
$$
{Given that $J(x)=1+O(x)$, we {can} normalize $N$ and $D$ by fixing
$N(0)=D(0)=1$.  {As $N$ and $D$ are relatively prime}, the above identity then implies that}
$D(x)=N(xq)$. Thus the 
polynomial $N(x)$ must satisfy
$$
N(x)+x^2q^2N(xq^2)= (1-x(1+q)) N(xq),
$$
but considering the degree in $x$ shows that this equation has no polynomial
solution, except $N(x)=0$. {Hence $J(x)$ and its $q$-shifts span a
2-dimensional space, and the same holds for $H(x)$ thanks to~\eqref{eq-HJ}
and~\eqref{eq-JH}.} 

\medskip
Finally, let us prove that 
{the space spanned by the shifts of $K$ cannot have dimension less than
3. In that case, it would have dimension 2 
(since it contains $H(x)$ and its $q$-shifts, by~\eqref{eq-KH}) and there
would exist rational functions $\alpha$ and $\beta$ such that}
$$
K(x)= \alpha(x) H(x) + \beta(x) H(xq).
$$
By combining~\eqref{eq-KH} and~\eqref{eqH-lin}, this would imply
$$
H(x)\left( \al(x)+{\frac{\beta(xq)(1-x)}x} \right)+ H(xq)\left( \be(x)
  -\frac{xq\, \al(xq)}{1-xq} -\frac{\be(xq)(1-2x)}x-1\right)=0,
$$
from which we derive 
{$\alpha(x)=-(1-x)\beta(xq)/x$ and}
$$
\be(x) {+\be(xq^2)}
-\frac{\be(xq)(1-2x)}x-1=0.
$$
It remains to apply Abramov's algorithm~\cite{abramov}, which determines all rational
solutions of a linear $q$-equation, to conclude that such a $\beta$ does
not exist  (we have used the {\sc Maple} implementation of Abramov's
algorithm, via the 
{\tt RationalSolution} command of the {\tt QDifferenceEquations}
package).
\end{proof}

\subsection{Fully commutative involutions}\label{sec:fci}
We now state analogous results for fc involutions. With the
notation~\eqref{double}, the main two series are now: 
\beq\label{DD-def}
\JI(x)=\sum_{n\geq0}\frac{(-1)^{\lceil n/2\rceil} x^nq^{n \choose
    2}}
{\llp q^2 \rrp _{\lfloor n/2\rfloor}},
\eeq
and
\beq\label{UV-def}
\KI(x)= \sum_{n\geq0}x^{n}q^{n+1\choose 2}\sum_{k=0}^{\lfloor n/2
\rfloor}\frac{(-1)^k} {\llp q^2\rrp_k}.
\eeq
For any  series $F(x)$, we denote by $F_e(x)$ and $F_o(x)$ its even
and odd parts in $x$:
\beq\label{eop}
F_e(x)= \frac 1 2 \left( F(x)+F(-x)\right), \qquad 
F_o(x)= \frac 1 2 \left( F(x)-F(-x)\right).
\eeq

\begin{Theorem}\label{thm:ABD-inv}
  Let $\cA(x,q)\equiv \cA$, $\cB(x,q)\equiv \cB$, and $\cD(x,q)\equiv \cD $ be the
  \gfs\  of fully commutative involutions of types $A$, $B$ and $D$, defined
  respectively by:
$$
\cA=\sum_{n\ge 0} \cA_n^{FC}(q) x^n, \qquad \cB=\sum_{n\ge 0} \cB_n^{FC}(q)
x^n, \qquad \cD=\sum_{n\ge 0} \cD_{n+1 }^{FC}(q) x^n.
$$ 
Then:
\beq\label{cA-expr}
\cA= \frac{\JI(-xq)}{\JI(x)},
\eeq
$$
\cB
=
\frac{\KI(x)}{\JI(x)}+\frac{xq^2(1-x)}{1-xq^2}
\frac{\JI(-xq)}{\JI(x)} - \frac{xq^2}{1-xq^2},
$$
and
$$
\cD=
\frac{2xq \KI_e(xq)}{\JI(x)}+  
 \frac{q+x(1-q)-x^2q^2}{1-xq^2}\frac{\JI(-xq)}
{\JI(x)}- { \frac {q} {1-xq^2} +1},
$$
where the series $\JI$ and $\KI$ are defined by~\eqref{DD-def} and~\eqref{UV-def}.
\end{Theorem}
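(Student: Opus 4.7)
The plan is to adapt the approach leading to Theorem~\ref{thm:ABD}, replacing fc elements by fc involutions at every step. I would begin by recalling from Section~\ref{sec:heaps} that an fc element of $W$ is an involution if and only if its associated fc heap is self-dual. Via the reduction carried out in Section~\ref{sec:finite} for the non-involutive case, the length enumeration of $\cA_n^{FC}$, $\cB_n^{FC}$ and $\cD_{n+1}^{FC}$ then reduces to the enumeration of self-dual alternating heaps over a path, respectively with the constraints on the rightmost column that were used for $A$, $B$ and $D$ in Theorem~\ref{thm:ABD}.

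The central observation is that a self-dual alternating heap is determined by one ``half'' of it together with the labels of its fixed points under the duality. Each orbit-pair contributes a factor $q^2$ to the length, while each fixed point contributes only $q$. This replaces the underlying $q$-grading by a $q^2$-grading with twisted axis contributions, and explains why the series $\JI(x)$ and $\KI(x)$ in~\eqref{DD-def}--\eqref{UV-def} involve denominators $\llp q^2 \rrp_{\lfloor n/2 \rfloor}$ and signs $(-1)^{\lceil n/2 \rceil}$ tracking the parity of the axis contribution.

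With this structural decomposition in hand, I would run the column-by-column recursive procedure of Section~\ref{sec:recursive} on the half-heaps. This should produce a linear $q$-equation for $\JI$ analogous to~\eqref{eqJ-lin}, and one of order three for $\KI$, in which $q$ is effectively replaced by $q^2$; I would also establish identities parallel to~\eqref{eq-JH}, \eqref{eq-HJ-alt} and~\eqref{eq-KH}. In particular I expect an identity relating $\JI(x)$ to $\JI(-xq)$, the minus sign reflecting the alternation on the axis; from this, the formula $\cA = \JI(-xq)/\JI(x)$ will follow by a quotient manipulation of the same form as the one producing $A = \JJ(xq)/[(1-xq)\JJ(x)]$. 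The formulas for $\cB$ and $\cD$ should then be obtained by tracking the contribution of the additional generators at the left end of the Coxeter graph, exactly as the $B$- and $D$-formulas are extracted from the $A$-formula in Theorem~\ref{thm:ABD}.

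The main obstacle will be type $\cD$: the branching at the left of the Coxeter graph interacts non-trivially with the self-duality, since the two fork generators can either be exchanged by the involution or fixed independently. Bookkeeping this carefully is what produces the factor $2xq\,\KI_e(xq)$ (with only the even part of $\KI$ appearing, as odd contributions cancel by the duality) and the modified rational correction $q + x(1-q) - x^2 q^2$ at the numerator. The final expressions can be cross-checked against the non-linear $q$-equations of~\cite{BJN-inv}, and numerically against polynomial expansions produced by the {\sc GAP} procedure described after Theorem~\ref{thm:ABD}.
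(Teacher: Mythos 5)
Your overall direction (reduce to self-dual alternating heaps, then adapt the recursive enumeration) is the paper's, but the crucial step is missing: you never specify a recursion that actually preserves self-duality. The paper's device is to peel the rightmost NW--SE \emph{and} SW--NE diagonals simultaneously, which keeps the heap self-dual and yields the functional equation~\eqref{eq-A-inv} for the refined series $\cA(s)$, in which the argument is shifted by $q^2$ and the number of generators drops by $2$; iterating and setting $s=1$ gives $\cA(1)=\JI(-xq)-(\JI(x)-1)\cA(1)$, whence~\eqref{cA-expr}. Your substitute, a ``half-heap'' decomposition with pairs weighted $q^2$ and fixed points weighted $q$, is only a heuristic: you do not say what the half of a self-dual alternating heap is, what boundary data it carries, or what equation it satisfies, and every announced identity (``I expect an identity relating $\JI(x)$ to $\JI(-xq)$'', ``will follow by a quotient manipulation'') is asserted rather than derived. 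Note also that $\JI$ is not obtained via a linear $q$-equation analogous to~\eqref{eqJ-lin}: it is \emph{constructed} by iterating the equation in the catalytic variable $s$ (and, incidentally, Section~\ref{sec:recursive} peels diagonals, not columns; column peeling is Section~\ref{sec:column}).

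The passage from $\cA$ to $\cB$ and $\cD$ is also not the routine extraction you describe. For $\cB$ one must treat separately the non-alternating heaps, giving~\eqref{cBna} in terms of $\cA$, and the self-dual alternating heaps of type $B$, whose peeling recursion~\eqref{eq-cBa} contains a genuinely new boundary term $\cT(s)$ (defined in~\eqref{cT-def}, for heaps whose rightmost diagonal has maximal size); iterating this is precisely what creates the new series $\KI$ and gives $\cBa(1)=\KI(x)/\JI(x)$. For $\cD$ the paper does not redo a heap analysis at the fork: it imports the identity $\cD=2\cBa_{\rm odd}+1+x\cA+\frac{1}{xq}\cBna$ from~\cite{BJN-inv}, so the essential new object is the refined series $\cBa_{\rm odd}$ counting self-dual type-$B$ alternating heaps with an \emph{odd} number of points in the first column, whose peeling (with $\cT_{\rm odd}$) yields~\eqref{CBaodd}, namely $xq\,\KI_e(xq)/\JI(x)$. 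Your explanation that $\KI_e$ appears because ``odd contributions cancel by the duality'' misidentifies the mechanism: the even part of $\KI$ comes from the parity restriction on the first column, not from a cancellation, and without this odd-column refinement together with the identity from~\cite{BJN-inv} (or some equivalent substitute) the stated formula for $\cD$ cannot be reached by merely ``tracking the additional generators'' as in the non-involutive case.
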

\noindent{\bf Remark.}
 Again, the coefficients of $x^0$ and $x^1$ in
$\cD(x,q)$ are irrelevant, {but in agreement with the convention
of~\cite{BJN-inv}. They read $\cD_1^{FC}(q)=1$
  and $\cD_2^{FC}(q)=(1+q)^2$. Table~\ref{tab:finite-inv} lists the
  values of $\cW_n^{FC}(q)$ for small values of $n$.}

\begin{table}[htb]
  \centering
  \begin{tabular}{|c|ccc|}
 $n$ & 2 &3 & 4 \\
\hline &&&\\
$\cA_{n}^{FC}$&  [1, 2]& [1, 3, 1, 0, 1]& [1, 4, 3, 0, 2]
\\
$\cB_{n}^{FC}$ &[1, 2, 0, 2]& [1, 3, 1, 2, 1, 1, 1]& [1, 4, 3, 2, 4, 1, 3, 1,
                                              1, 0, 1]
\\
$\cD_{n+1}^{FC}$&[1, 3, 1, 0, 1]& [1, 4, 3, 1, 3, 1, 3]& [1, 5, 6, 2, 4, 2,
                                                   3, 1, 1]
  \end{tabular}
\vskip 3mm
  \caption{Length generating functions of fc involutions in finite Coxeter
    groups. The list $[a_0, \ldots, a_k]$ stands for the polynomial
    $a_0+a_1q+\cdots + a_k q^k$.}
  \label{tab:finite-inv}
\end{table}

\noindent 
\begin{Proposition}
   The series $\JI$  satisfies a linear $q$-equation of order $2$:
\beq\label{eq-UU}
(1-2xq)\JI(x)-(1-x(1+q)) \JI(xq)+x^2q^2(1-2x)\JI(xq^2)=0.
\eeq
We also have
\begin{align}
  (1-2x)\JI(-xq)&=2\JI(x)-\JI(xq), \label{JI-minus}
\\
\label{JI-JIe}
\JI(x)&=\JI_e(x)-x\JI_e(xq),
\end{align}
and 
\beq\label{HI-eq}
\HI(x)-\HI(xq)+x^2q\HI(xq^2)=0.
\eeq
The vector space (over $\qs(x,q)$) spanned by $\JI(x)$
and its $q$-shifts $\JI(xq), \JI(xq^2), \ldots$ has dimension~$2$, and
coincides with the space spanned by all series $\JI(-xq^i)$, and with
the space spanned by all series $\JI_e(xq^i)$.

The series $\KI$ is related to $\JI$ by: 
\beq\label{KI-JI}
\KI(x){-} xq\KI(xq)=\frac{\JI(x)-x\JI(xq)}{1-2x}= \JI(x)+x\JI(-xq),
\eeq
and thus satisfies a $q$-equation of order $3$:
\beq\label{KIlin}
\KI(x)-(1+xq)\KI(xq)+xq^2(1+xq)\KI(xq^2)-x^3q^6\KI(xq^3)=0.
\eeq
  The series $\KI(xq^i)$ span a $3$-dimensional vector space, which 
contains   $\JI(x)$ since
\beq\label{JI-KI}
\JI(x)=(1-x)\KI(x)-qx\KI(xq)+x^3q^3\KI(xq^2).
\eeq
Finally, 
\beq\label{KI-KIe}
\KI(x)= \KI_e(x)+xq\KI_e(xq),
\eeq
and  the series $\KI_e(xq^i)$ span a $4$-dimensional vector space, which 
contains $\KI(x)$ and $\JI(x)$.
 \end{Proposition}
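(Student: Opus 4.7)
My plan follows closely the proof of Proposition~\ref{prop:JHK}: first establish a short list of base identities directly from the series definitions of $\JI$ and $\KI$, then derive the remaining relations algebraically, and finally handle the dimension statements by an Abramov-style argument.

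For the base identities, I would first prove~\eqref{JI-JIe} by extracting the coefficient of $x^{2k+1}$: it reduces to $c_{2k+1} = -q^{2k} c_{2k}$ (where $c_n$ denotes the coefficient of $x^n$ in $\JI$), which is immediate from $\binom{2k+1}{2} = \binom{2k}{2}+2k$. Identity~\eqref{HI-eq} follows similarly: the coefficient of $x^{2k}$ reduces to a short $q$-Pochhammer manipulation based on $\binom{2k}{2}-\binom{2k-2}{2}=4k-3$. These two coefficient identities play the role of~\eqref{eq-JH} and~\eqref{eq-HJ-alt} in Proposition~\ref{prop:JHK}. Once they are available, both~\eqref{JI-minus} and~\eqref{eq-UU} follow purely algebraically: since $\JI_e$ is an even function of $x$, one has $\JI(-xq^i) = \JI_e(xq^i) + xq^i\JI_e(xq^{i+1})$, and both claimed identities collapse to tautologies after expressing everything through $\JI_e$ using~\eqref{JI-JIe} at $x, xq, xq^2$ and reducing via~\eqref{HI-eq}.

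The identities for $\KI$ hinge on~\eqref{KI-JI}: expanding $\KI(x)-xq\KI(xq)$ from the definition and telescoping the inner sum over $k$ kills the odd-$n$ contributions and leaves exactly $\JI_e(xq)$. The second and third forms of~\eqref{KI-JI} then come from the intermediate identity $\JI(x) - x\JI(xq) = (1-2x)\JI_e(xq)$ (obtained along the way in the derivation of~\eqref{eq-UU}) and from~\eqref{JI-minus}. Setting $L(x) := \KI(x)-xq\KI(xq) = \JI_e(xq)$ and applying the shifted~\eqref{HI-eq} to $L$ yields~\eqref{KIlin}. For~\eqref{JI-KI}, I would start from~\eqref{JI-JIe}, use~\eqref{HI-eq} to express $\JI_e(x) = \JI_e(xq) - x^2q\JI_e(xq^2)$, and replace each $\JI_e(xq^i)$ via the appropriate shift of~\eqref{KI-JI}. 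Finally,~\eqref{KI-KIe} is an easy coefficient comparison: the inner sum $S_n$ in the definition of $\KI$ satisfies $S_{2k+1}=S_{2k}$, which directly yields $\KI_o(x)=xq\KI_e(xq)$ after matching $q$-powers.

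The main obstacle is the dimension statements. The upper bounds $\dim\operatorname{span}\{\JI(xq^i)\}\le 2$ and $\dim\operatorname{span}\{\KI(xq^i)\}\le 3$ are immediate from~\eqref{eq-UU} and~\eqref{KIlin}, and substituting~\eqref{KI-KIe} into~\eqref{KIlin} yields the 4th-order relation
\[
\KI_e(x) - \KI_e(xq) + x^2q^5\KI_e(xq^3) - x^4q^{10}\KI_e(xq^4) = 0,
\]
so that $\dim\operatorname{span}\{\KI_e(xq^i)\}\le 4$. The equality of the three spans in the $\JI$ case reduces to explicit $2\times 2$ linear algebra: the transition matrices between $(\JI_e(x),\JI_e(xq))$ and $(\JI(x),\JI(xq))$ (resp.\ $(\JI(-x),\JI(-xq))$) have determinants $2x-1$ and $2x+1$, both nonzero in $\qs(x,q)$. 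The analogous inclusions $\KI(x), \JI(x) \in \operatorname{span}\{\KI_e(xq^i)\}$ follow from~\eqref{KI-KIe} and~\eqref{JI-KI}. The truly technical step is the lower bound on dimensions: for $\JI$, I would adapt the template used for $J$ in the proof of Proposition~\ref{prop:JHK}, assuming $\JI(x)/\JI(xq)$ is rational in lowest terms and deriving from~\eqref{eq-UU} a polynomial $q$-equation whose degree analysis forces the numerator to vanish. For $\KI$ and $\KI_e$ the same scheme reduces matters to checking that certain linear $q$-equations admit no nontrivial rational solution, which I would discharge by Abramov's algorithm, exactly as done for $K$ in Proposition~\ref{prop:JHK}. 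I expect the $\KI_e$ case to be the most delicate, since one must rule out candidate relations of orders $1$, $2$, and $3$ rather than just one.
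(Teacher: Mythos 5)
Your proposal is correct and follows essentially the same route as the paper: the paper simply observes that all the identities follow by extracting the coefficient of $x^n$ from the explicit expansions of $\JI$ and $\KI$ (your derivation of several of them algebraically from two such extractions is a harmless variant), and it proves the dimension lower bounds exactly as you plan, via the rationality-plus-degree argument of Proposition~\ref{prop:JHK} for $\JI$ and via Abramov's algorithm for $\KI$ and $\KI_e$. Two small points of detail: for $\JI$ the degree analysis actually splits into four cases for the relation between numerator and denominator (one of which needs a leading-coefficient extraction rather than degrees alone), and for $\KI_e$ you need not rule out relations of orders $1$, $2$ and $3$ separately --- since the $\KI_e$-span contains the $3$-dimensional $\KI$-span, a dimension smaller than $4$ forces dimension exactly $3$, so only the single candidate relation $\KI_e(x)=\alpha(x)\KI(x)+\beta(x)\KI(xq)+\gamma(x)\KI(xq^2)$ must be excluded, which the paper reduces to one $q$-equation for $\gamma$ handled by Abramov's algorithm.
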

\begin{proof}
 Since the expansions in $x$ of the series 
$\JI$ and $\KI$ are given explicitly, all identities above  are
 readily proved by extracting the   coefficient of $x^n$, for $n \in \ns$.

It remains to prove our results about dimensions. The arguments are
the same as in the last part of the proof of Proposition~\ref{prop:JHK}. If
the space spanned by the $\JI(xq^i)$ had dimension~1, then 
$\JI(x)/\JI(xq)$ would be a rational function $N(x)/D(x)$, with
$N(x)$ and $D(x)$ coprime in $\qs(q)[x]$ and $N(0)=D(0)=1$. Then dividing~\eqref{eq-UU} by $\JI(xq)$ would give 
\beq\label{eqDN}
(1-2xq) \frac{N(x)}{D(x)} +x^2q^2(1-2x) \frac{D(xq)}{N(xq)}=1-x(1+q).
\eeq
This gives four possible relations between {the polynomials} $N$ and $D$:
$$
D(x)=N(xq), \qquad D(x)=(1-2xq) N(xq), \qquad D(x)
=\frac{N(xq)}{1-2x}, \qquad D(x)
=\frac{(1-2xq)N(xq)}{1-2x}.
$$
Reporting each of these possibilities in~\eqref{eqDN} gives a linear
$q$-equation for $N$, and a degree argument shows that this equation has no
non-zero solution {(in the third case, the degree argument
  does not suffice, and we have to extract the dominant coefficient in
  $x$ to
  conclude). Hence $\JI(x)$ and its $q$-shifts span a 2-dimensional
space.  The same space is spanned by the series $\JI(-xq^i)$, thanks
to~\eqref{JI-minus}, and finally by the series  $\JI_e(xq^i)$, thanks
to~\eqref{JI-JIe} and~\eqref{HI-eq}}.

{Now assume that the space spanned by the series $\KI(xq^i)$ has  dimension less
than~3. Then its dimension is 2,  since it contains
$\JI(x)$ by~\eqref{JI-KI}, and  there exist rational functions $\al$ and
$\be$ such that}
$$
\KI(x)= \alpha(x) \JI(x) +\beta(x) \JI(xq).
$$
 Then~\eqref{KI-JI} and~\eqref{eq-UU} give
\begin{multline*}
  \Big( xq (1- 2x ) \al  ( x ) + (1 -2xq
 ) \be  ( xq ) -xq \Big)\JI ( x ) \\+
 \Big( {x}^{2}{q}^{2} ( 2x-1 ) \al  ( xq ) -xq
 ( 2x-1 ) \be  ( x ) + ( xq+x-1 ) \be 
 ( xq ) +{x}^{2}q \Big) \JI ( xq ) =0
,
\end{multline*}
from which it follows that
$$
xq (1- 2x )  ( 2xq-1 ) \be  ( x ) +
 ( xq+x-1 )  ( 2xq-1 ) \be  ( xq ) -x
 ( 2x-1 )  ( 2x{q}^{2}-1 ) \be  ( x{q}^{2}
 ) ={x}^{2}q (1- q ) 
.
$$
But applying Abramov's algorithm proves that this equation has no
rational solution.

Finally, {assume that the space spanned by the series $\KI_e(xq^i)$ has
dimension less than  4. Then  it has dimension~3  (since it contains
$\KI(x)$ by~\eqref{KI-KIe}) and there  exist rational functions $\al$,
$\be$ and $\gamma$ such that}
$$
\KI_e(x)= \alpha(x) \KI(x) +\beta(x) \KI(xq)+\gamma(x) \KI(xq^2).
$$
 Then
we derive from~\eqref{KI-KIe}
and~\eqref{KIlin} a linear $q$-equation for $\gamma(x)$:
$$
{q}^{6}x\gamma ( x ) +{q}^{3} ( xq+1 ) \gamma
 ( xq ) + ( x{q}^{2}+1 ) \gamma ( x{q}^{2}
 ) +
 {x}\gamma ( x{q}^{3} ) ={x}^{3}{q}^{9}
.
$$
Applying again Abramov's algorithm  proves that this equation has no
rational solution.
\end{proof}

\section{First  recursive approach: Peeling a diagonal}
\label{sec:recursive}
In this section, we prove the finite type results stated in the
previous section. We use the 
 description of fc elements in terms of
heaps given in~\cite{BJN-long,St1}. 
Then the central question is to count alternating heaps over a path, having at most one point in the rightmost column. 
Our approach is recursive, and consists in peeling the rightmost NW-SE
diagonal of alternating heaps. A precise description is given below,
but we refer to Figure~\ref{fig:A} for a quick intuition.  This is an
adaptation of a 
classical method used to count \emph{convex
polyominoes}~\cite{klarner-rivest-conv,bousquet-vcd}.

\subsection{Type  $\boldsymbol A$}
\label{sec:peel-A}
 
It is known from~\cite{St1} that fc elements in $A_n$ are in
bijection with alternating heaps over the $n$-point
path, having at most one point {in their first (i.e. leftmost) and last (rightmost)
columns.
}
We count them by recording three parameters:
 the number $n$
of generators of the group (variable $x$), the length of the fc element
(variable $q$), and   the size $i$ of its largest right factor
$s_{n-i+1}\cdots s_n$ (variable~$s$). In graphical terms, $i$ is the
size of the rightmost NW-SE diagonal of the heap (Figure~\ref{fig:A}).  We denote by $A(s)\equiv A(s;x,q)$ the corresponding \gf, and by
  $A_i\equiv A_i(x,q)$ the coefficient of $s^i$ in this series:
$$
A(s)\equiv A(s;x,q)=\sum_{i\ge 0} A_i s^i .
$$
Thus the series $A(x,q)$ of {Theorem}~\ref{thm:A-tA} is now
$A(1;x,q)\equiv A(1)$. We hope that this will not cause any
confusion.

We count separately four types of heaps:
  \begin{itemize}
  \item the trivial group ($n=0$) contributes $1$; we assume in
    what follows that $n\ge 1$; 
\item if the last column is empty, that is, $i=0$, the heap encodes an
  fc element  of $A_{n-1}$. Thus the \gf\ for this type  is simply
  $xA(1)$;
\item if the last column contains a point, and this point is lower than any
  point in the next-to-last column, removing this rightmost
  point leaves an fc element of $A_{n-1}$ (Figure~\ref{fig:A},
  left; {this includes the case where the next-to-last
    column is empty}). The size of the rightmost diagonal decreases by $1$. Hence
  the \gf\ for this type is $xsqA(s)$;
\item finally, if the last column contains a point, and this point is higher
  than the lowest point of the next-to-last column, we remove the top
  point in each of the $i$ rightmost columns (Figure~\ref{fig:A},
  right). {In other words, we peel off the rightmost NW-SE
    diagonal.} This leaves an fc element of
  $A_{n-1}$, with a rightmost diagonal of size $j \ge i$.  This shows   that the \gf\ for   this type is
$$
x\sum_{j\ge 1} A_j \sum_{i=1}^j (sq)^i= x\sum_{j\ge 1} A_j
\frac{sq-(sq)^{j+1}}{1-sq}= \frac {xsq}{1-sq}\left(A(1)-A(sq)\right).
$$
\end{itemize}
\begin{figure}[ht]
\begin{center}
{\scalebox{1}{\input{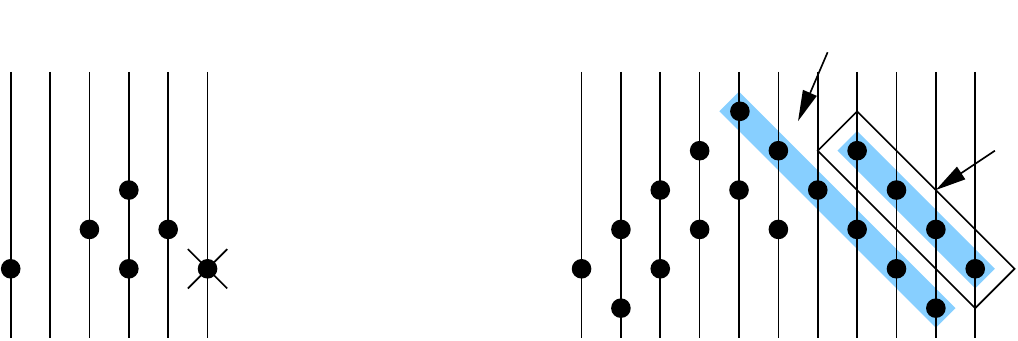_t}}}
\caption{Two types of alternating heaps with one point in the last column.} 
\label{fig:A}
\end{center}
\end{figure}
Putting together the four cases  gives:
\beq\label{A-eq}
A(s)= 1+xA(1)+xsqA(s)+ \frac {xsq}{1-sq}\left(A(1)-A(sq)\right).
\eeq
Grouping the terms in $A(s)$ and in $A(1)$ and dividing by $(1-xsq)$ yields:
$$
A(s)=  \frac 1 {1-xsq}+ \frac x {(1-sq)(1-xsq)}A(1)- \frac {xsq}
{(1-sq)(1-xsq)}A(sq).
$$
Iterating this equation $m$ times provides an expression of $A(s)$ in
terms of $A(sq^{m+1})$:
\begin{multline*}
  A(s)= \sum_{n= 0}^m \frac{(-xs)^n q^{n+1\choose 2}}{(sq)_n(xsq)_n}
\left( \frac 1 {1-xsq^{n+1}}+ \frac x
     {(1-sq^{n+1})(1-xsq^{n+1})}A(1)\right)\\
+\frac{(-xs)^{m+1} q^{m+2\choose 2}}{(sq)_{m+1 }(xsq)_{m+1}} A(sq^{m+1}).
\end{multline*}
As a series in $x$, the rightmost term tends to $0$ as $m$ tends to
infinity. Hence:
$$
A(s)= \sum_{n\ge 0} \frac{(-xs)^n q^{n+1\choose 2}}{(sq)_n(xsq)_n}
\left( \frac 1 {1-xsq^{n+1}}+ \frac x {(1-sq^{n+1})(1-xsq^{n+1})}A(1)\right).
$$
Setting $s=1$ gives
\begin{align*}
  A(1)&= \sum_{n\ge 0} \frac{(-x)^n q^{n+1\choose 2}}{(q)_n(xq)_{n+1}}
-
A(1) \sum_{n\ge 0} \frac{(-x)^{n+1} q^{n+1\choose
    2}}{(q)_{n+1}(xq)_{n+1}}\\
&=\frac{J(xq)}{1-xq} - A(1) \left( J(x)-1\right),
\end{align*}
with $J(x)$ defined by~\eqref{JK-def}.
  Solving for $A(1)$ gives the first result of
{Theorem}~\ref{thm:ABD}. \qed

\bigskip
We now adapt this to fc involutions of  $A_n$.  They are in bijection with
self-dual alternating heaps over the $n$-point path in which the first
and last columns contain at most one point. In order to preserve
self-duality,  the peeling procedure must now
remove the rightmost NW-SE diagonal \emm and the rightmost SW-NE
diagonal, (Figure~\ref{fig:A-inv}). We denote by
$\cA(s;x,q)=\sum_{i\geq0} \cA_i s^i$ the  \gf\ of fc involutions, refined by the same
parameter as above. We count separately  the same four types of heaps,
but there are no self-dual heaps of the third type:
\begin{itemize}
\item the empty group ($n=0$) contributes $1$; we assume from now on that $n\geq1$;
\item if the last column is empty,  that is, $i=0$, the generating function is $x\cA (1)$;
\item if the last column contains one point, that is, $i\geq1$, there
  are two cases: either  $n=1$, and then the contribution is $xsq$, or
  $n\geq2$. In this case, removing the rightmost  
{NW-SE and SW-NE diagonals} leaves an fc involution of
  $A_{n-2}$, with a rightmost diagonal of size $j \ge i-1$
  (Figure~\ref{fig:A-inv}). Hence  the generating function for this type reads:
$$
{xsq+} x^2\sum_{j\geq0 }\cA_j \sum_{i=1}^{j+1}s^iq^{2i-1}
={xsq+} \frac{x^2sq}{1-sq^2}\left(\cA (1)-sq^2\cA (sq^2)\right).
$$
\end{itemize}
Putting all contributions together and gathering the terms $\cA(1)$ gives:
\beq\label{eq-A-inv}
\cA (s)
=1+xsq+x\left(1+\frac{xsq}{1-sq^2}\right)\cA
(1)-\frac{x^2s^2q^3}{1-sq^2}\, \cA (sq^2).
\eeq
Iterating $m$ times this equation and then letting $m$ tends to infinity yields
$$
\cA(s)=\sum_{n\geq0}\frac{(-x^2s^2)^nq^{n(2n+1)}}{
\llp sq^2 \rrp_n
}
\left(1+xsq^{2n+1}+x\left(1+\frac{xsq^{2n+1}}{1-sq^{2n+2}}\right)\cA
  (1)\right).
$$
{Once $s$ is set to 1, this reads, with the notation~\eqref{DD-def}:
$$
\cA(1)= \JI(-xq) - (\JI(x)-1) \cA(1).
$$}
Solving for $\cA(1)$ gives the first result of
{Theorem}~\ref{thm:ABD-inv}. \qed

\begin{figure}[h!]
\begin{center}
{\scalebox{1}{\input{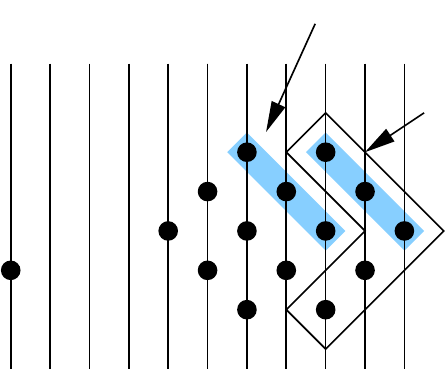_t}}}
\caption{Peeling the rightmost diagonals in a self-dual alternating heap.}
\label{fig:A-inv} 
\end{center}
\end{figure}

\bigskip
\noindent{\bf Remark: connection with staircase polyominoes and Dyck paths.} We have proved these  $A$-results to illustrate
our recursive approach, but they are equivalent to previously
published results involving staircase polyominoes and Dyck paths,
respectively. More precisely, let us say that a non-empty fc element
of $A_n$ is \emm connected, (or: \emm has full support,) if
every generator occurs in it. In this case, if we replace every point of the
corresponding heap by a unit square, the resulting collection of
squares is a \emm staircase  
polyomino, (Figure~\ref{fig:fs}, left). The perimeter of this polyomino is $2n+2$,
and its area is 
the length of the fc element. Let $P(x,q)$ be the \gf\ of
staircase polyominoes, counted by the half-perimeter (variable $x$)
and the area (variable $q$). Then  the \gf\
of connected fc elements of type $A$ is  $\Ac(x,q)=  P(x,q)/x$. By
discussing whether the first 
column of an fc heap of type $A$ is empty or not, one can relate the
series $A$ and $\Ac$ as follows:
$$
A= 1+ xA +\Ac + \Ac x A,
$$
so that
$$ 
A= \frac{1+\Ac}{1-x-x\Ac} = \frac{1+P/x}{1-x-P}.
$$ 
An expression of $P(x,q)$ can be found
in~\cite[Thm.~3.2]{bousquet-vcd}
or~\cite[Prop.~4.1]{bousquet-viennot}. With our notation, it reads
$$
P(x,q)= \frac{x^2q}{1-xq}\frac{H(xq^2)}{H(xq)},
$$
where $H(x)$ is defined by~\eqref{H-def}.  One then  recovers the expression of $A$ given in
{Theorem}~\ref{thm:ABD} using~\eqref{eq-JH} and~\eqref{eq-HJ-alt}.

\begin{figure}[ht]
\begin{center}
\includegraphics[scale=0.8]{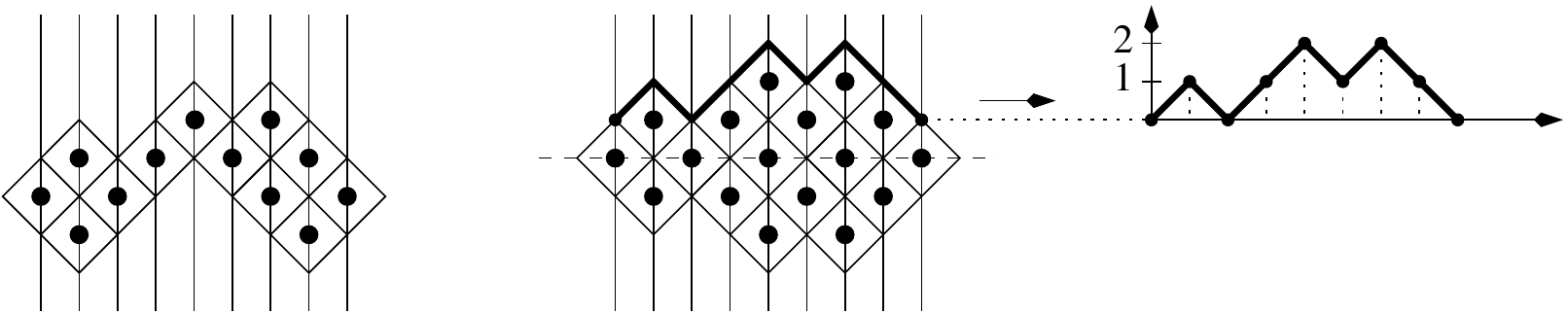}
\caption{Left: A connected fc element of  $A_9$ and the
  corresponding staircase polyomino, formed of square cells. Right: A connected
fc involution of $A_9$ and the corresponding Dyck path, with the
heights of the vertices shown.}
\label{fig:fs}
\end{center}
\end{figure}

Fully commutative involutions of type $A$ are related in  a similar
fashion to  staircase polyominoes  that are invariant by reflection
in a horizontal line, with half-perimeter/area \gf\ $\cP(x,q)$:
$$
\cA=  \frac{1+\cP/x}{1-x-\cP}.
$$ 
These  polyominoes are in bijection with the famous \emm Dyck
paths, (Figure~\ref{fig:fs}, right). More precisely, a symmetric
staircase polyomino of half-perimeter $n$ {(necessarily even)} and area $a$ gives rise to a
Dyck path of length $n-2$ in which the \emm total height, (the sum of heights of the vertices)
is $a-n+1$. Hence, denoting by 
$D(x,q)$ the \gf\ of Dyck paths, counted by half-length and total height, we
have
$$
 \cP{(x,q)}= x^2 q\,D(x^2q^2,q).
$$
An expression for $D(x,q)$ can be found
in~\cite[Ex.~5.2.12(c)]{goulden-jackson}. With our notation, it gives
$$
D(x^2q^2,q)= \frac{\HI(xq^2)}{\HI(xq)},
$$
where $\HI(x)$ is the even part of the series $\JI(x)$ defined
by~\eqref{DD-def}.  One then  recovers the expression of $\cA$ given in
{Theorem}~\ref{thm:ABD-inv} using~\eqref{JI-JIe} and~\eqref{HI-eq}.

\subsection{Type  $\boldsymbol B$}
\label{sec:recB}
As argued in~\cite[Sec.~4.4]{BJN-long},  fc elements of
 $B_n$ come in two types:
\begin{itemize}
\item first, we have all alternating heaps over the $n$-point path having at most
  one point in the last column. We denote by $\Ba(x)$ their \gf;
\item the remaining fc elements are not alternating. With the notation
  of Figure~\ref{fig:dynkin}, they are obtained 
as follows: {one starts from an
alternating heap over a path with vertices $s_j, \ldots,
  s_{n-1}$ (with $1\le j\le n-1$), having  exactly one point in
  the $s_j$-column and at most one point in the last column, and
  inflates the point in the $s_j$-column into a 
$<$-shaped heap
  $s_j s_{j-1} \cdots s_1 t s_1 \cdots s_{j-1} s_j$
  (see the third picture of~\cite[Fig.~7]{BJN-long} for an illustration)}. We denote by $\Bna(x)$ the
  corresponding \gf. 
\end{itemize}
The latter description gives $\Bna(x)=xq^2/(1-xq^2) A^{(1)}(x)$, where $ A^{(1)}(x)$ counts
alternating heaps {over a path} with one point in the first column and at most
one point in the last one. Since the \gf\ of alternating heaps
with an empty first column is $xA$, we have
$A^{(1)}(x)= A-1-xA$. Thus:
\begin{align}
\Bna(x)&= \frac{xq^2}{1-xq^2} \left((1-x)A-1\right) \label{Bna-A}
\\
&=
 \frac{xq^2}{1-xq^2}\left( \frac{1-x}{1-xq} \frac{\JJ(xq)}{\JJ(x)}
   -1\right).  \label{Bna-sol}
\end{align}

We now focus on {the \gf\  $\Ba(x)$ of} alternating fc
elements of type $B$.  We {refine} 
 their enumeration by recording the size $i$ of  
the rightmost diagonal: with the  generators denoted as in Figure~\ref{fig:dynkin}, this is
the longest right factor of the heap of the form $s_{n-i}\cdots s_{n-1}$, with $s_0=t$. 
We  apply  the recursive
approach that led to~\eqref{A-eq} in Section~\ref{sec:peel-A}. {The
only difference is that we can now have several points in the first
column.} The first three cases  contribute as
before (with $A(s)$ replaced by $\Ba(s)$). However, the fourth case is
now richer: {when the point in the last column is higher
  than some point in the next-to-last column},
removing the top point in each of the $i$ rightmost columns may leave
an alternating heap with a rightmost diagonal of size $i-1$ {(instead
of size $j\ge i$ in the $A$-case). This
happens only if $i$ is maximal, that is, equal to the number of
generators. An example is shown in Figure~\ref{fig:T}, left. We shall see below that the contribution of these heaps
is $xsqT(sq)$, where
\beq\label{eq-T}
T(s)=xsq+xsqT(s)+xsqT(sq),
\eeq
which implies
\beq\label{Tsol}
T(s)= \sum_{m\ge 1} \frac{(xs)^m q^{m+1\choose 2}}{(xsq)_m}.
\eeq}
Putting together all four cases gives:
\beq\label{Ba-eq}
\Ba(s)= 1+x\Ba(1)+xsq\Ba(s)+ \frac {xsq}{1-sq}\left(\Ba(1)-\Ba(sq)\right)+ xsqT(sq).
\eeq
We now proceed as we did for the \bjnChange{A}-equation~\eqref{A-eq}. Grouping the
terms in $\Ba(s)$ and in $\Ba(1)$, and dividing by $(1-xsq)$, gives:
\begin{align*}
  \Ba(s)&=  \frac 1 {1-xsq}+ \frac{xsq}{1-xsq}T(sq)+ \frac x
{(1-sq)(1-xsq)}\Ba(1)
- \frac {xsq}{(1-sq)(1-xsq)}\Ba(sq)
\\
&=  1+T(s)+ \frac x
{(1-sq)(1-xsq)}\Ba(1)
- \frac {xsq}{(1-sq)(1-xsq)}\Ba(sq)
\end{align*}
by~\eqref{eq-T}.
Iterating the equation gives
\beq\label{B-itere}
\Ba(s)= \sum_{n\ge 0} \frac{(-xs)^n q^{n+1\choose 2}}{(sq)_n(xsq)_n}
\left( 1+T(sq^{n})+ 
\frac x {(1-sq^{n+1})(1-xsq^{n+1})}\Ba(1)\right).
\eeq
Using the expression~\eqref{Tsol} of $T(s)$, we can rewrite
\begin{align*}
  \sum_{n\ge 0} \frac{(-xs)^n q^{n+1\choose 2}}{(sq)_{n}(xsq)_n}
\left(1+T(sq^{n})\right)&=
 \sum_{n\ge 0} \frac{(-xs)^n q^{n+1\choose 2}}{(sq)_{n}(xsq)_n}
\sum_{m\ge 0} \frac{(xsq^n)^m q^{m+1\choose 2}}{(xsq^{n+1})_m}
\\
&=
\sum_{m,n \ge0 } \frac{(xs)^{m+n} q^{m+n+1\choose
    2}}{(xsq)_{m+n}}\frac{(-1)^n}{(sq)_{n}}
\\
&=
\sum_{N \ge0} \frac{(xs)^{N} q^{N+1\choose
    2}}{(xsq)_N}\sum_{n=0}^{N}\frac{(-1)^n}{(sq)_{n}}.
\end{align*}
We now return to~\eqref{B-itere}, where we set $s=1$. This gives
$$
\Ba(1)= 
\sum_{n \ge0} \frac{x^{n} q^{n+1\choose
    2}}{(xq)_n}\sum_{k=0}^{n}\frac{(-1)^k}{(q)_{k}}-
\Ba(1) \sum_{n\ge 0} \frac{(-x)^{n+1} q^{n+1\choose
    2}}{(q)_{n+1}(xq)_{n+1}}.
$$
Solving for $\Ba(1)$ gives
\beq\label{Ba-expr}
\Ba(1)= \frac {\KK(x)}{\JJ(x)},
\eeq
where {$\JJ$ and} $\KK$ are defined by~\eqref{JK-def}. Adding the
contribution~\eqref{Bna-sol} {of non-alternating fc
  elements} gives the second result of
{Theorem}~\ref{thm:ABD}.

\medskip

{We still have to explain the term $xsq T(sq)$ occurring
in~\eqref{Ba-eq}. 
Let $T(s)\equiv T(s;x,q)$ denote
  the \gf\ of alternating heaps such that $i$ is maximal (and having,
  as always in this subsection, at most one point in
  the last column ---  and hence exactly one). Then the heaps that we need to count to
  complete the proof of~\eqref{Ba-eq} ($i$ maximal, two points in the
  next-to-last column) are counted by $xsqT(sq)$, because they are
  obtained by adding one point in every column of a heap counted by
  $T(s)$, and then a final (rightmost) column containing one point
  (Figure~\ref{fig:T}, left). It remains to prove that $T(s)$
  satisfies~\eqref{eq-T}. This results again from a peeling procedure.
  The first term counts the heap reduced to one point, the second
  counts those such that the point in the last column is lower than any
  point in the next-to-last column, and the third one, as we have just
  explained, counts those in which the point in the last column is higher
  than one point in the next-to-last column. Another way to justify~\eqref{Tsol}
is to observe that heaps counted by $T(s)$  are just integer
partitions into distinct parts, where $q$ counts the weight, and $(xs)$
the size of the largest part. The integer $m$ in~\eqref{Tsol} gives the number of parts.}

\begin{figure}[ht]
\begin{center}
{\scalebox{1}{\input{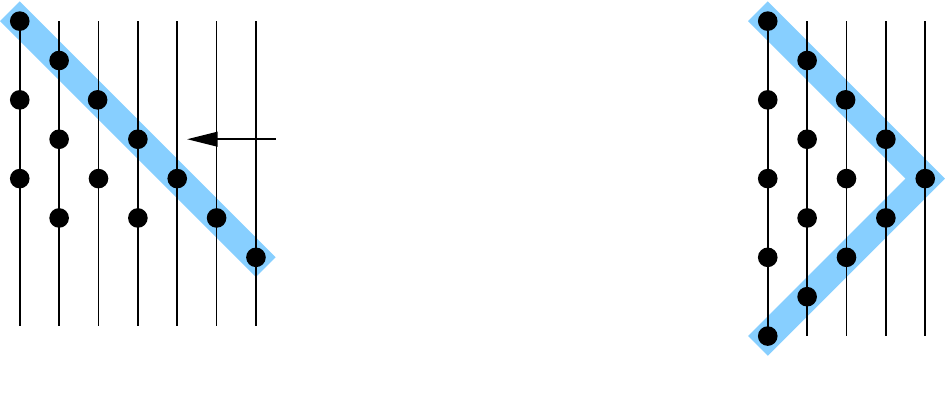_t}}}
\caption{Left: An alternating heap where the parameter $i$ is
  maximal. The corresponding integer partition is $\mbmChange{\bjnChange{(7,4,2)}}$. Right:
  The self-dual case.}
\label{fig:T}
\end{center}
\end{figure}
 \qed

\medskip 
We now restrict this argument to involutions, that is, to self-dual
heaps.
The \gf\ of non-alternating fc involutions is
$\cBna=xq^2/(1-xq^2)\cA^{(1)}(x)$, where $\cA^{(1)}(x)$ counts self-dual
alternating heaps with one point in the first column and at most
one point in the last column. With our notation, $\cA^{(1)}=
\cA-1-x\cA$, and  by {the first result of} {Theorem}~\ref{thm:ABD-inv},
\begin{align}
\cBna&={ \frac{xq^2}{1-xq^2}\left( \cA(1-x)-1\right)}\label{cBna-0}
\\
&= \frac{xq^2}{1-xq^2}\left( \frac{(1-x) \JI(-xq)}{\JI(x)}-1\right).\label{cBna}
\end{align}
We now consider the \gf\ $\cBa$ of self-dual alternating heaps with at most one point in the
last column. We refine it as before into $\cBa(s)$, and adapt the argument that led to~\eqref{eq-A-inv} in
type  $A$. The first two cases contribute as before (with $\cA$ replaced by
$\cBa$). The third one splits again into two sub-cases: either $i$ is
maximal {(Figure~\ref{fig:T}, right)}, which gives the  \gf\
\beq\label{cT-def}
\cT(s)=\sum_{m\geq1}(xs)^mq^{m+1 \choose 2},
\eeq
 or deleting the rightmost diagonals
leaves a heap with a rightmost diagonal of size $j\ge i-1$, as in
Figure~\ref{fig:A-inv}. 
This  gives:
\beq\label{eq-cBa}
\cBa(s)=1+x\left(1+\frac{xsq}{1-sq^2}\right)\cBa(1)-\frac{x^2s^2q^3}{1-sq^2}\,
\cBa(sq^2)+\cT (s).
\eeq
By iteration, we obtain
$$
\cBa(s)=
\sum_{n\geq0}\frac{(-x^2s^2)^nq^{n(2n+1)}}
{\llp sq^2 \rrp_n}
\left(1+\cT(sq^{2n})+x\left(1+\frac{xsq^{2n+1}}{1-sq^{2n+2}}\right)\cBa(1)\right).
$$
  Setting $s=1$ and solving for $\cBa(1)$ gives
$$
\JI(x)\cBa(1)=
\sum_{n\geq0}\frac{(-x^2)^nq^{n(2n+1)}}
{\llp q^2 \rrp _n}
\left(1+\cT  (q^{2n})\right),
$$
{where $\JI(x)$ is defined by~\eqref{DD-def}.}
By definition~\eqref{cT-def} of the series $\cT(s)$, the right-hand
side can be written as  
$$
  \sum_{n,m\geq0}\frac{(-x^2)^nq^{n(2n+1)}}
{\llp q^2 \rrp_n}
x^mq^{2nm+{m+1\choose 2}}
=\sum_{N\geq0}x^Nq^{N+1\choose 2}\sum_{k=0}^{\lfloor
  N/2\rfloor}\frac{(-1)^k}
{\llp q^2 \rrp_k}\\
= \KI(x)
$$
where $\KI(x)$ is defined in~\eqref{UV-def} (we have set $N:=m+2n$ and $k:=n$ in the double sum). This finally gives 
\beq\label{cBalt}
\cBa(1)=
\frac{\KI(x)}{\JI(x)}.
\eeq
 Adding the
contribution~\eqref{cBna} {of non-alternating heaps} gives the second result of
{Theorem}~\ref{thm:ABD-inv}. \qed

\bigskip
\noindent{\bf Remark.} In the next subsection, we need to count
self-dual alternating heaps of type $B$ in which the first column contains an
odd number of points. Let us denote by $\cBa_{\mbox{\tiny odd}}(s)$
the corresponding \gf. The peeling argument that led
to~\eqref{eq-cBa} specializes into:
$$
\cBa_{\mbox{\tiny odd}}(s)=x\left(1+\frac{xsq}{1-sq^2}\right)\cBa_{\mbox{\tiny odd}}(1)-\frac{x^2s^2q^3}{1-sq^2}\,
\cBa_{\mbox{\tiny odd}}(sq^2)+\cT_{\mbox{\tiny odd}} (s),
$$
with 
$$
\cT_{\mbox{\tiny odd}} (s)=\sum_{m \ \hbox{\scriptsize
    odd}}(xs)^mq^{m+1\choose 2}.
$$
{Comparing with~\eqref{eq-cBa} shows that the only difference is the
replacement of $1+ \cT(s)$ by $\cT_{\mbox{\tiny odd}} (s)$.}
The iteration procedure yields:
\beq\label{CBaodd}
\cBa_{\mbox{\tiny odd}}{(1)}=
\frac{xq\KI_e(xq)}{\JI(x)},
\eeq
where $\KI_e(x)$ is the even part of $\KI(x)$ as defined by~\eqref{eop}.
\subsection{Type  $\boldsymbol D$}
\label{sec:peel-D}
 As explained in~\cite{BJN-long}, the structures of fc elements of type $B$
 and $D$ are closely related, and this results in similar
 \gfs.
 {In particular, we claim that the second equation of Proposition~4.6
 in~\cite{BJN-long} translates, in our notation, as}
\beq\label{D-sol}
D= 2\Ba -1-x A + \frac 1{xq} \Bna.
\eeq
Indeed, the definitions of the series $M^*$ and $Q^*$ given
in~\cite[Sec.~1.3]{BJN-long} in terms of certain paths, and the
correspondence between these paths and heaps, show that, with our
notation,  $M^*=1+xA$ and $Q^*=\Ba$. Proposition~4.6
 of~\cite{BJN-long} thus reads
 \begin{align*}
   B&= \Ba + \frac{x^2q^3}{1-xq^2}M^*(x) M(xq),\\
D&= 2\Ba-(1+xA) +  \frac{xq^2}{1-xq^2}M^*(x) M(xq).
 \end{align*}
The second term in the expression of $B$ must thus be $\Bna$, and
reporting this in the expression of~$D$ gives~\eqref{D-sol}.
 The third result of {Theorem}~\ref{thm:ABD} now follows
from~\eqref{Bna-sol},~\eqref{Ba-expr}, and our expression~\eqref{A-expr} of $A$.

\medskip
Following the analysis of~\cite[Prop.~3.1]{BJN-inv}, we claim that the
counterpart of~\eqref{D-sol} for fc \emm involutions, of type $D$ is:
\beq
\cD= 2\cBa_{\mbox{\tiny odd}} + 1+x \cA
 + \frac 1{xq} \cBna, \label{Dinv-sol}
\eeq
where $\cBa_{\mbox{\tiny odd}}$ counts self-dual alternating heaps of type $B$
with an odd number of points in the first
column. {Indeed, the definitions of the series $M$,  $Q$ and $Q^\circ$
  in terms of paths given in~\cite[Sec.~1.4 and Prop.~3.1]{BJN-inv}, and the
  correspondence between these paths and heaps, show that, with our
  notation, 
$$
\frac{M}{1-xM}= 1+x\cA, \quad  \frac{Q}{1-xM}=
\cBa \quad \hbox{and} \quad \frac{Q^\circ}{1-xM}=
\cBa_{\mbox{\tiny odd}}.
$$
With our notation, Proposition~3.1 of~\cite{BJN-inv} thus reads
\begin{align*}
  \cB&= \cBa+ \frac{x^2q^3}{1-xq^2}\frac{M(x)M(xq)}{1-xM(x)},
\\
\cD&= 2 \cBa_{\mbox{\tiny odd}} +1+x\cA + \frac{xq^2}{1-xq^2}\frac{M(x)M(xq)}{1-xM(x)}.
\end{align*}
The second term in the expression of $\cB$ must thus be $\cBna$, and
reporting this in the expression of~$\cD$ gives~\eqref{Dinv-sol}.}
The third result of {Theorem}~\ref{thm:ABD-inv} now follows
from~\eqref{CBaodd},~\eqref{cBna} and our expression~\eqref{cA-expr} of $\cA$. \qed

\subsection{New expressions for {some} path generating functions}
\label{sec:earlier}

In the previous subsection, we have used two identities between \gfs\
that came from~\cite{BJN-long} and~\cite{BJN-inv}, namely~\eqref{D-sol}
and~\eqref{Dinv-sol}, to express the series $D$ and $\cD$.
{In Section~\ref{sec:At}, we will use more of these identities
when counting fc elements of  type $\tA$}.  It is thus 
appropriate to give the explicit values of some generating functions for paths
appearing in~\cite{BJN-long} and~\cite{BJN-inv}, and characterized therein
by non-linear $q$-equations.

\medskip
We first express  the series involved in~\cite{BJN-long} in the
enumeration of fc elements of types $A$, $B$, and~$D$
 in terms of the series $\JJ$ and $\KK$
of~\eqref{JK-def}, and of the series $\HH$ of~\eqref{H-def}. 

\begin{Proposition}\label{prop:MQ}
  The series denoted $M(x)$ and $M^*(x)$
  in~\cite{BJN-long} admit the following explicit expressions:
$$
M(x)= 
\frac{\HH(xq)}{(1-x) \HH(x)}, \qquad \qquad M^*(x)=
\frac{\HH(xq)}{\JJ(x)}.
$$
 \end{Proposition}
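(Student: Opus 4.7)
The plan is to derive both expressions purely algebraically from: Theorem~\ref{thm:ABD}, the identities of Proposition~\ref{prop:JHK}, and two relations borrowed from~\cite{BJN-long} that are recalled in Section~\ref{sec:peel-D}. The first is $M^* = 1 + xA$; the second is the decomposition
\[ \Bna = \frac{x^2q^3}{1-xq^2}\, M^*(x)\, M(xq), \]
which follows from~\cite[Prop.~4.6]{BJN-long} once $\Ba$ is identified as the alternating contribution to $B$. I would first obtain $M^*$ directly, and then extract $M$ from the second relation.

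For $M^*$, substituting the formula~\eqref{A-expr} into $M^* = 1+xA$ gives
\[ M^*(x) = \frac{(1-xq)\JJ(x) + x\JJ(xq)}{(1-xq)\JJ(x)}. \]
Multiplying~\eqref{eq-HJ-alt} by $(1-xq)$ identifies the numerator with $(1-xq)\HH(xq)$, whence $M^*(x) = \HH(xq)/\JJ(x)$ at once.

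For $M$, combining the displayed decomposition with the explicit form~\eqref{Bna-A} of $\Bna$ yields
\[ M(xq) = \frac{(1-x)A - 1}{xq\, M^*(x)}. \]
Plugging in~\eqref{A-expr} for $A$ and the formula for $M^*$ just established, the problem reduces to verifying the single algebraic identity
\[ (1-x)\JJ(xq) - (1-xq)\JJ(x) = xq\,\HH(xq^2). \]
Granting this, one finds $M(xq) = \HH(xq^2)/[(1-xq)\HH(xq)]$, and the shift $xq \mapsto x$ produces the claimed formula for $M(x)$. This identity is the only real computation and is the main (modest) obstacle: I would prove it by using~\eqref{eq-HJ-alt} to rewrite $(1-xq)\JJ(x) = (1-xq)\HH(xq) - x\JJ(xq)$, so that the left-hand side collapses to $\JJ(xq) - (1-xq)\HH(xq)$; then~\eqref{eq-JH} applied at $xq$ supplies the remaining substitution. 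After this manipulation, no further work is needed.
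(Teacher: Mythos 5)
Your proof is correct. For $M^*$ you do exactly what the paper does: quote $M^*=1+xA$ from~\cite{BJN-long}, insert~\eqref{A-expr}, and identify the numerator via~\eqref{eq-HJ-alt}. For $M$, however, you take a genuinely different route. The paper simply quotes Eq.~(4) of~\cite{BJN-long}, namely $M(x)=M^*(x)/\bigl(1-xM^*(x)\bigr)$, so that $M(x)=\HH(xq)/\bigl(\JJ(x)-x\HH(xq)\bigr)$ and~\eqref{eq-JH} finishes immediately. You instead extract $M(xq)$ from the relation $\Bna=\frac{x^2q^3}{1-xq^2}M^*(x)M(xq)$ (the translation of~\cite[Prop.~4.6]{BJN-long} recalled in Section~\ref{sec:peel-D}) together with the combinatorial formula~\eqref{Bna-A}, which reduces the claim to the identity $(1-x)\JJ(xq)-(1-xq)\JJ(x)=xq\,\HH(xq^2)$; your verification of that identity from~\eqref{eq-HJ-alt} and~\eqref{eq-JH} (applied at $xq$) is correct, and the substitution $xq\mapsto x$ is legitimate since $q$ is an indeterminate. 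What your detour buys is that the only facts imported from~\cite{BJN-long} are ones already restated and justified in this paper (the $B$-decomposition of Section~\ref{sec:peel-D} and~\eqref{Bna-A}), at the cost of one extra $\JJ$--$\HH$ identity to check; the paper's route is shorter but leans on an additional equation of~\cite{BJN-long} that is only cited, not rederived. Both arguments are free of circularity, since Section~\ref{sec:peel-D} does not use Proposition~\ref{prop:MQ}.
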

\begin{proof}
 The series denoted $A^{FC}$ in~\cite{BJN-long} is, in our notation,
  $xA$. Hence Proposition~2.7 from~\cite{BJN-long} gives $M^*=1+xA$,
  and the above expression of $M^*$ follows from
{Theorem}~\ref{thm:ABD} and~\eqref{eq-HJ-alt}.

Let us now consider the series $M$. Equation~(4) in~\cite{BJN-long} gives
$$
M(x)= \frac{M^*(x)}{1-xM^*(x)},
$$
and the above expression of $M(x)$ follows from the expression of
$M^*(x)$ and~\eqref{eq-JH}. 
\end{proof}
\noindent{\bf Remark.} Our proof of {Theorem}~\ref{thm:ABD} only relies
on the description of fc elements in terms of heaps. An alternative
proof would have been to use the {characterization of the series $A$, $B$
and $D$  in terms of non-linear $q$-equations} given in~\cite{BJN-long},
and to check that our expressions in terms of $J$ and $K$ satisfy
these equations.  One advantage of our recursive approach is that
it \emm constructs, the
series $J$ and $K$, while one would not understand where they come
from  if we had simply checked the systems of~\cite{BJN-long}. In the
next section however, we solve the $\tA$-case {by checking
  the relevant $q$-equations} of~\cite{BJN-long}. 
\medskip

To conclude this section, we state the counterpart of
Proposition~\ref{prop:MQ} for involutions. Recall that the series
$\HI$ 
is defined {by combining~\eqref{DD-def} and~\eqref{eop}}. 

\begin{Proposition}\label{prop:Dyckwalks}
Let us denote by $\cM(x)\equiv \cM(x,q)$ the series denoted by
$M(x)$ in~\cite{BJN-inv}.  Then
$$
\cM(x)=\frac{\HI(xq)}{\HI(x)}.
$$
\end{Proposition}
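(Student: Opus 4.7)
The plan is to determine $\cM$ from a functional relation with the series $\cA$ established in \cite{BJN-inv}, and then to identify the resulting expression with $\HI(xq)/\HI(x)$ using the identities of the previous propositions.

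More precisely, as noted in Section~\ref{sec:peel-D} (in the discussion following~\eqref{Dinv-sol}), the definition of $\cM$ in \cite{BJN-inv} translates in our notation into
$$
\frac{\cM(x)}{1-x\cM(x)}=1+x\cA,
$$
and by Theorem~\ref{thm:ABD-inv} we have $\cA = \JI(-xq)/\JI(x)$. I would first simplify $1+x\cA$ into an expression involving $\HI$ only. Since $\HI=\JI_e$, the identity~\eqref{JI-JIe} reads $\JI(x)=\HI(x)-x\HI(xq)$, and applying it at $-xq$ (together with the fact that $\HI$ is an even series in $x$) gives $\JI(-xq)=\HI(xq)+xq\HI(xq^2)$. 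Therefore
$$
(1+x\cA)\,\JI(x) = \JI(x)+x\JI(-xq)=\HI(x)-x\HI(xq)+x\HI(xq)+x^{2}q\HI(xq^{2})=\HI(x)+x^{2}q\HI(xq^{2}),
$$
and the recurrence~\eqref{HI-eq} for $\HI$ turns this into $\HI(xq)$. Hence $1+x\cA=\HI(xq)/\JI(x)$.

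Now I would check that $F(x):=\HI(xq)/\HI(x)$ satisfies the same equation $F/(1-xF)=\HI(xq)/\JI(x)$. Indeed,
$$
\frac{F}{1-xF}=\frac{\HI(xq)}{\HI(x)-x\HI(xq)}=\frac{\HI(xq)}{\JI(x)},
$$
where the last equality is again just~\eqref{JI-JIe}. Since the defining relation $\cM/(1-x\cM)=1+x\cA$ determines $\cM$ uniquely as a formal power series in~$x$ (the map $M\mapsto M/(1-xM)$ being invertible on $1+x\qs[[x,q]]$), we conclude that $\cM=F=\HI(xq)/\HI(x)$, as desired.

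There is no serious obstacle here; the whole argument is a short algebraic manipulation once the relation $\cM/(1-x\cM)=1+x\cA$ from \cite{BJN-inv} is quoted. The only mildly delicate point is keeping track of the interplay between the series $\JI$, its even part $\HI$, and the $q$-shifts, but all the required identities (namely~\eqref{JI-JIe} and~\eqref{HI-eq}) are already available from the proposition preceding Section~\ref{sec:recursive}.
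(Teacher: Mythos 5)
Your proof is correct and follows essentially the same route as the paper: both start from the relation $\cM(x)/(1-x\cM(x))=1+x\cA$ taken from Proposition~3.1 of~\cite{BJN-inv} together with the expression~\eqref{cA-expr} of $\cA$, and then identify the resulting rational expression in $\JI$ with $\HI(xq)/\HI(x)$. The only (minor) difference is that the paper performs this last identification by a direct check on the explicit expansion of $\JI$, whereas you deduce it from the already-stated identities~\eqref{JI-JIe} and~\eqref{HI-eq}, which is a perfectly valid and slightly cleaner way to finish.
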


\begin{proof}
The series denoted $\bar A$ in~\cite{BJN-inv} is our series
$x\cA$. Hence Proposition~3.1 in~\cite{BJN-inv} reads
$$ 
\frac{\cM(x)}{1-x\cM(x)}=1+x\cA.
$$
{Combined with the expression~\eqref{cA-expr} of $\cA$, this gives:
$$
\cM(x)= \frac{\JI(x)+x\JI(-xq)}{(1+x)\JI(x)+x^2\JI(-xq)}.
$$
Using the explicit expression of $\JI$, one checks that the numerator
(resp. denominator) of this expression is $\HI(xq)$ (resp. $\HI(x)$).}
\end{proof}

\section{Affine type $\boldsymbol{\tA}$}
\label{sec:At}
In this section, we first establish the simple expression of $\tA$  announced in
{Theorem}~\ref{thm:A-tA}. 
{As proved in~\cite[Prop.~3.3]{St1}, fc elements in $\tA_{n-1}$
are in one-to-one correspondence with  alternating heaps over the
$n$-point cycle (Figure~\ref{fig:Alter}, right).}
In~\cite{BJN-long},
the series $\tA$ was characterized  in terms of two series $\vO$ and
$\vO^*$ related to the series $M$ and $M^*$ of
Proposition~\ref{prop:MQ}. This yields our
nice expression of $\tA$ in terms of $J$ and its derivative.  We then
proceed similarly with  the \gf\ $\ctA$ of fc involutions in type~$\tA$ {({Theorem}~\ref{thm:A-tA-inv})}.

This computational  approach does not explain the simplicity
of $\tA$ and $\ctA$. In {the extended abstract}~\cite{BBJN-bij}, we provide
bijective explanations of them.

\subsection{All fully commutative elements}

\begin{Theorem}\label{thm:O}
  With $\JJ(x)$ and $\HH(x)$ defined by~\eqref{JK-def} and~\eqref{H-def}
  above, {the} \gfs\ $\vO(x)$ and $\vO^*(x)$
defined  in Corollary 2.4 of~\cite{BJN-long} are given by
\beq\label{expr-Oc}
\vO(x)= \frac 1 {1-x} -x \frac{H'(x)}{\HH(x)} + xq
\frac{H'(xq)}{\HH(xq)},
\eeq
\beq\label{expr-Ocs}
  \vO^*(x)= 1- x \frac{J'(x)}{\JJ(x)}+xq \frac{H'(xq)}{\HH(xq)}.
\eeq%
The \gf\ of fully commutative elements in type $\tA$ is
$$
 \tA:=
\sum_{n\ge 1} \tA_{n-1}^{FC}(q) x^n=- x \frac{\JJ'(x)}{\JJ(x)}- \sum_{n\ge 1}
 \frac{x^n  q^n}{1-q^n}.
$$
\end{Theorem}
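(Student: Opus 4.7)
The plan is to derive all three formulas by combining the explicit expressions for $M$ and $M^*$ from Proposition~\ref{prop:MQ} with the characterization of $\vO$, $\vO^*$ and $\tA$ given in~\cite[Cor.~2.4]{BJN-long}, and to simplify using the $J$-$H$ identities of Proposition~\ref{prop:JHK}.

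My first step is to verify the expressions~\eqref{expr-Oc} and~\eqref{expr-Ocs} via logarithmic differentiation. Taking $\log M^*(x) = \log H(xq) - \log J(x)$ and differentiating yields
\[
1 + x\, \frac{(M^*)'(x)}{M^*(x)} \;=\; 1 - x\, \frac{J'(x)}{J(x)} + xq\, \frac{H'(xq)}{H(xq)},
\]
which matches the right-hand side of~\eqref{expr-Ocs}. A parallel computation starting from $M(x) = H(xq)/((1-x)H(x))$ gives
\[
1 + x\, \frac{M'(x)}{M(x)} \;=\; \frac{1}{1-x} - x\, \frac{H'(x)}{H(x)} + xq\, \frac{H'(xq)}{H(xq)},
\]
matching~\eqref{expr-Oc}. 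So each identity reduces to showing $\vO = 1 + xM'/M$ and $\vO^* = 1 + x(M^*)'/M^*$. I would confirm that this is precisely what Corollary~2.4 of~\cite{BJN-long} asserts, after translating its $q$-equations for $\vO$, $\vO^*$ into the log-derivative format; conceptually, this is natural because $\vO$ and $\vO^*$ enumerate linear walk-like structures carrying a marked column, while $M$ and $M^*$ enumerate their unmarked counterparts.

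For the formula for $\tA$, the plan is to substitute~\eqref{expr-Oc} and~\eqref{expr-Ocs} into the relation from~\cite{BJN-long} that expresses $\tA$ as a cyclic (trace-type) combination of $\vO$ and $\vO^*$. The $xq\, H'(xq)/H(xq)$ contributions should partially cancel, and the remaining $x H'(x)/H(x)$ terms can be converted into $x J'(x)/J(x)$ by logarithmically differentiating identity~\eqref{eq-JH} of Proposition~\ref{prop:JHK} (or equivalently using~\eqref{eq-HJ}). What should survive is precisely $-xJ'/J$ together with a rational remainder, which by Lambert-series expansion becomes $-\sum_{n\ge 1} x^n q^n/(1-q^n) = -\sum_{n,k\ge 1} x^n q^{nk}$, the signature correction distinguishing the cyclic type $\tA$ from the linear type $A$.

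The main obstacle is this last identification: checking that the rational remainder, after all the $H$-to-$J$ substitutions and cancellations, matches the claimed Lambert series exactly. A clean way to discharge it is to expand both sides in powers of $x$ and verify the coefficient-wise identity of the resulting $q$-series, reducing the problem to elementary geometric-series manipulations analogous to those already used in the proof of Proposition~\ref{prop:JHK}. Carefully tracking the factors $(1-x)$ and $(1-xq)$ produced by~\eqref{eq-JH} and~\eqref{eq-HJ-alt} should give the coefficient match and complete the proof.
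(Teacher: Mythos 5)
Your reduction of~\eqref{expr-Oc} and~\eqref{expr-Ocs} to the statements $\vO=1+xM'/M$ and $\vO^*=1+x(M^*)'/M^*$ is a correct restatement of the target (the logarithmic-derivative computation from Proposition~\ref{prop:MQ} is fine), but the step where you ``confirm that this is precisely what Corollary~2.4 of~\cite{BJN-long} asserts'' is where the actual content of the theorem lies, and it is not what that corollary says. In~\cite{BJN-long} the series $\vO$ is given by $\vO(x)=M(x)\bigl(1+x^2q\,\tfrac{\partial(xM)}{\partial x}(xq)\bigr)$ (and $\vO^*$ analogously), not as a logarithmic derivative; the identity $M(x)\bigl(1+x^2q(M(xq)+xqM'(xq))\bigr)=1+xM'(x)/M(x)$ is false for a generic series and holds here only because of the specific $q$-difference structure of $M$. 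The paper's proof does exactly this missing work: it substitutes $M=\HH(xq)/((1-x)\HH(x))$ into the defining formula, which produces $\HH(xq^2)$ and $\HH'(xq^2)$, and then uses the linear $q$-equation~\eqref{eqH-lin} to rewrite these in terms of $\HH(x),\HH(xq),\HH'(x),\HH'(xq)$, yielding~\eqref{expr-Oc}; the heuristic about ``marked columns'' does not substitute for this computation (indeed the authors point out that a genuinely combinatorial explanation is deferred to another paper). As written, your argument for the first two formulas is circular: you assume the log-derivative characterization that is equivalent to what must be proved.

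For $\tA$, your outline is closer to the paper but still imprecise: the relation in~\cite{BJN-long} is not a direct ``trace-type'' expression of $\tA$ in terms of $\vO,\vO^*$, but the $q$-difference relation $\tA(x)-\tA(xq)=\vO(xq)-1-2\tfrac{xq}{1-xq}+\vO^*(x)-\vO^*(xq)$, which one must iterate; the telescoping then kills the $xq\HH'(xq)/\HH(xq)$ terms against those in $\vO^*(x)$ (no appeal to~\eqref{eq-JH} is needed, since $\vO^*$ already carries $-xJ'/J$), and the surviving sum $\sum_{i\ge1}xq^i/(1-xq^i)$ expands directly into the Lambert series $\sum_{n\ge1}x^nq^n/(1-q^n)$. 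Your fallback of verifying the remainder coefficient-wise could in principle be carried out, but the missing derivation of~\eqref{expr-Oc}--\eqref{expr-Ocs} from the actual definitions is the genuine gap to repair.
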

\noindent{\bf Remarks} \\
1. Above, we have taken $\tA_1= A_2$ so that
$\tA_1^{FC}(q)=(1+q)/(1-q)$. The coefficient  $\tA_0^{FC}(q)$ of
$x^1$ in $\tA(x,q)$ is irrelevant ({and equal to 1}).

\smallskip
\noindent 2.  {Fully commutative elements of $\tA$ having full support
  (that is, in which each generator occurs) can be seen as periodic
  staircase polyominoes. These objects have recently been studied
  for their own sake~\cite{aval-ppp,boussicault-laborde}, and in particular
  counted according to their perimeter. \mbmChangenew{When proving the
    above theorem, we will obtain  their length generating function as
  \beq\label{ppp}
  - xq \frac{\HH'(xq)}{\HH(xq)}- \sum_{n\ge 1}
 \frac{x^n  q^n}{1-q^n}.
  \eeq
   This formula can be explained combinatorially by the methods of~\cite{BBJN-bij}.}
  }

\begin{proof}
 Both series $\vO$ and $\vO^*$ are expressed in terms of $M$ and $M^*$ in Eq.~(2)
 of~\cite{BJN-long}. For instance,
\beq\label{vO-def}
\vO(x)= M(x) \left(1+ x^2q \frac{\partial (xM)}{\partial x}(xq)\right).
\eeq
The notation should be understood as follows: one first takes the
derivative of $xM$, and then evaluates it at $xq$.  Using the expression of $M$  in terms of $H$
(Proposition~\ref{prop:MQ}), the right-hand side can be written in terms of
$H(x)$, $H(xq)$, $H(xq^2)$, $H'(xq)$ and $H'(xq^2)$. Now using the
linear $q$-equation~\eqref{eqH-lin} satisfied by $H$, we can  express $H(xq^2)$ and
$H'(xq^2)$ in terms of $H(x)$, $H(xq)$, $H'(x)$ and $H'(xq)$. This
yields the expression of $\vO$ given in the {theorem}. 

The expression of $\vO^*$ can be proved similarly, starting from
Eq.~(2) in ~\cite{BJN-long}. Alternatively, we
can  derive it from our
expression of $\vO$ since this equation implies that
$\vO^*=\vO M^*/M= \vO (1-x)H/J$. One then checks that this coincides
with~\eqref{expr-Ocs} using the expression~\eqref{eq-JH} of
$J$ in terms of $H$.

Consider now the series $\tA$. The second part of Eq.~(1)
from~\cite{BJN-long} tells us that
\beq\label{AO}
\tA(x)-\tA(xq)= \vO(xq)-1 -2\,\frac{xq}{1-xq} +\vO^*(x) -\vO^*(xq).
\eeq
(We have used the fact that $\vO(x)$ has constant term $1$.) We now
use our expression of $\vO$:
$$
\tA(x)-\tA(xq)= -\frac{xq}{1-xq} -xq\frac{H'(xq)}{H(xq)}
+xq^2\frac{H'(xq^2)}{H(xq^2)} +\vO^*(x) -\vO^*(xq).
$$
Since
$\tA(x)$ has constant term $0$, and $\vO^*(x)$ has constant term $1$,
iterating this equation gives: 
$$
\tA(x)= -\sum_{i\ge 1}
\frac{xq^i}{1-xq^i}-xq\frac{H'(xq)}{H(xq)}+\vO^*(x)-1,
$$
and the last statement of the {theorem} follows using our expression
of $\vO^*(x)$. (We also expand the sum over $i$ in powers of $x$.)

\mbmChangenew{If we only want to count fc elements with full support,
  then the arguments of~\cite[Cor.~2.4]{BJN-long} shows that we only
  have to drop the terms $\vO^*(x)$ and $\vO^*(xq)$ in~\eqref{AO}. The
  expression~\eqref{ppp} of their \gf \ follows.}
 \end{proof}

\subsection{Fully commutative involutions}\label{sec:typeAtilde}
\begin{Theorem}\label{thm:At-inv}
 Let $\cM(x)$ be given by Proposition~\ref{prop:Dyckwalks}. {Let
 $\vcO(x)=\sum_{n\ge 0} \vO_n(q)x^n$, where the
 polynomials  $\vO_n(q)$ are defined} in Proposition~3.3 of~\cite{BJN-inv},
 and let
\beq\label{vcOs-def}
\vcO^*(x)=\frac{\cM(x)}{1-x\cM(x)} \left(1+ x^2q\, \frac{\partial (x
    \cM)}{\partial x}(xq)\right).
\eeq
Then
\beq\label{vcO}
\vcO(x)=1-x\frac{\HI'(x)}{\HI(x)} +xq\frac{\HI'(xq)}{\HI(xq)},
\eeq
\beq\label{vcOs}
\vcO^*(x)= 1-x\frac{\JI'(x)}{\JI(x)} +xq\frac{\HI'(xq)}{\HI(xq)},
\eeq
   where $\JI(x)$ is defined by~\eqref{UV-def} and $\JI_e(x)$ denotes
   its even part in $x$ (see~\eqref{eop}).

The \gf\ of fully commutative involutions in type $\tA$ is
$$
\ctA=
\sum_{n\ge 1} \ctA_{n-1}^{FC}(q) x^n=
-x \frac{\JI'(x)}{\JI                  (x)}.
$$
\end{Theorem}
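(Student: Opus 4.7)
\medskip
\noindent\textbf{Proof proposal.}
The plan is to mirror closely the proof of Theorem~\ref{thm:O}, replacing the pairs $(\HH,\JJ)$ and $M,M^*$ by their involution analogues $(\HI,\JI)$ and $\cM$, and exploiting the explicit expression of $\cM$ from Proposition~\ref{prop:Dyckwalks} together with the linear $q$-equation~\eqref{HI-eq}. The first task is to establish~\eqref{vcOs}. Starting from the definition~\eqref{vcOs-def}, Proposition~\ref{prop:Dyckwalks} gives $\cM(x)=\HI(xq)/\HI(x)$, and the identity~\eqref{JI-JIe} (together with $\HI=\JI_e$) reads $\JI(x)=\HI(x)-x\HI(xq)$, so that
$$
\frac{\cM(x)}{1-x\cM(x)}=\frac{\HI(xq)}{\JI(x)}, \qquad x\cM(x)=1-\frac{\JI(x)}{\HI(x)}.
$$
Differentiating $x\cM(x)$ and evaluating at $xq$ expresses the parenthesised factor of~\eqref{vcOs-def} in terms of $\HI$, $\JI$, $\HI'$, $\JI'$ at the arguments $xq$ and $xq^2$. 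I would then eliminate $\HI(xq^2)$ and $\HI'(xq^2)$ using~\eqref{HI-eq} and its $q$-derivative, and convert between $\HI$ and $\JI$ using~\eqref{JI-JIe}. After collecting terms, the right-hand side reduces to~\eqref{vcOs}.

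For~\eqref{vcO}, two routes are possible. Either run the same computation with $\cM$ replaced by $\cM/(1-x\cM)=1+x\cA$ in the $\vcO$-analogue of~\eqref{vO-def} (which should follow from Proposition~3.3 of~\cite{BJN-inv}), or, following the \textit{alternatively} step of the proof of Theorem~\ref{thm:O}, derive $\vcO$ from $\vcO^*$ via an identity of the form $\vcO^*/\vcO = \JI(x)/\HI(x)$ (the involution counterpart of $\vO^*/\vO=M^*/M=(1-x)H/J$). Substituting the expression~\eqref{vcOs} for $\vcO^*$ and using $\JI=\HI-x\HI(xq)$ yields~\eqref{vcO} after elementary simplification.

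Finally, for the closed form of $\ctA$, I would use the $\tA$-involution analogue of~\eqref{AO} from~\cite{BJN-inv}, namely a relation expressing $\ctA(x)-\ctA(xq)$ as a combination of $\vcO(xq)$, $\vcO^*(x)$ and $\vcO^*(xq)$ with an explicit rational correction. Plugging in~\eqref{vcO} and~\eqref{vcOs}, the terms of the form $xq^i \HI'(xq^i)/\HI(xq^i)$ telescope in the $x\leftrightarrow xq$ shift, leaving only $-x\JI'(x)/\JI(x)+xq\JI'(xq)/\JI(xq)$ plus (possibly) an elementary residual. Iterating the resulting $q$-difference equation and using $\ctA(0)=0$ should give exactly $\ctA(x)=-x\JI'(x)/\JI(x)$, with no residual sum $\sum x^nq^n/(1-q^n)$, consistent with the simpler shape observed for involutions.

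The main obstacle will be the bookkeeping in the first step: the raw formula for $\vcO^*(x)$ involves $\HI,\HI',\JI,\JI'$ at the three arguments $x,xq,xq^2$, and collapsing it to~\eqref{vcOs} requires the right combination of~\eqref{HI-eq}, its derivative, and~\eqref{JI-JIe}. A secondary subtlety is verifying that the telescoping in the last step really leaves no leftover terms; here the analogue of the rational correction in~\eqref{AO} must balance precisely against $\vcO(xq)-1$, which is a computation that is specific to the involution recursions of~\cite{BJN-inv} and does not directly follow from the non-involution case.
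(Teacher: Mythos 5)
Your proposal is correct and is essentially the paper's own argument: the paper likewise obtains $\vcO$ from the formula $\vcO(x)=\cM(x)\bigl(1+x^2q\,\frac{\partial (x\cM)}{\partial x}(xq)\bigr)$ coming from Proposition~3.3 of~\cite{BJN-inv}, using $\cM(x)=\HI(xq)/\HI(x)$ and the linear $q$-equation~\eqref{HI-eq}, then gets~\eqref{vcOs} from $\vcO^*(x)=\vcO(x)/(1-x\cM(x))$ together with~\eqref{JI-JIe}, and concludes with the relation $\ctA(x)-\ctA(xq)=\vcO(xq)-1+\vcO^*(x)-\vcO^*(xq)$ of~\cite{BJN-inv}, which carries \emph{no} rational correction, so the telescoping leaves no residual exactly as you anticipated (the only difference being that you establish~\eqref{vcOs} before~\eqref{vcO}, which is equally valid). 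The one slip is in your optional second route for~\eqref{vcO}: since $1-x\cM(x)=\JI(x)/\HI(x)$, the correct ratio is $\vcO^*/\vcO=\HI(x)/\JI(x)$, not $\JI(x)/\HI(x)$, as indeed follows from the identities you yourself wrote in your first paragraph.
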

\noindent{\bf Remark.} The coefficients of $x^1$ and $x^2$ in $\ctA$
are irrelevant.
\begin{proof}
 {{The polynomials
  $\vO_n(q)$ and the series $\cM(x)$ are defined in terms of Dyck-like
  paths in~\cite{BJN-inv} (see Proposition~3.3 and Section 1.4, \mbmChangenew{where the $\check{F}$ notation
  is explained})}. These definitions lead to:}  
$$
\vcO(x)= \cM(x) \left(1+ x^2q\, \frac{\partial (x
    \cM)}{\partial x}(xq)\right).
$$
 We can now derive the
  expression~\eqref{vcO} exactly  as we proved~\eqref{expr-Oc}
  from~\eqref{vO-def}, {using Proposition~\ref{prop:Dyckwalks} for the
    expression of $\cM$ and the linear $q$-equation~\eqref{HI-eq} satisfied
    by $\HI$.}

 The definition~\eqref{vcOs-def} of $\vcO^*(x)$ now reads $\vcO^*(x)= \vcO(x)/(1-x
 \cM(x))$. Using the expression of $\cM$ given in
 Proposition~\ref{prop:Dyckwalks}, and the expression~\eqref{JI-JIe} of
 $\JI$ in terms of $\JI_e$, one readily checks that this coincides with our expression of $\vcO^*(x)$.

\medskip
Consider now the series $\ctA$. Proposition~3.3 in~\cite{BJN-inv}
tells us that
\begin{align*}
  \ctA(x)-\ctA(xq)&= \vcO(xq)-1 +\vcO^*(x)-\vcO^*(xq)\\
&= -xq\ \frac{\HI'(xq)}{\HI(xq)} +xq^2\, \frac{\HI'(xq^2)}{\HI(xq^2)} +
\vcO^*(x)-\vcO^*(xq) 
\end{align*}
by~\eqref{vcO}. Iterating this formula gives
$$
\ctA(x)= -xq\ \frac{\HI'(xq)}{\HI(xq)} +\vcO^*(x)-1 = -x\,
\frac{\JI'(x)}{\JI (x)}
$$
by~\eqref{vcOs}.
\end{proof}

\begin{Corollary}
  The number of fully commutative involutions in $\tA_{2m}$ is finite,
  equal to $4^m$.
\end{Corollary}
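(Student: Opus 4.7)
The plan is to take the closed form $\ctA(x,q) = -x\,\JI'(x)/\JI(x)$ from Theorem~\ref{thm:At-inv} and evaluate its odd-in-$x$ part at $q=1$, since $\ctA_{2m}^{FC}(q) = [x^{2m+1}]\,\ctA(x,q)$ picks out exactly the odd-degree coefficients. First I would introduce the auxiliary series
\[
G(x,q) \;:=\; \frac{x\,\JI_e(xq)}{\JI_e(x)}.
\]
Since $\JI_e$ is even in~$x$, identity~\eqref{JI-JIe} yields $\JI(\pm x) = \JI_e(x)\,(1 \mp G(x,q))$, so that the ratio $\JI(x)/\JI(-x)$ equals $(1-G)/(1+G)$. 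A short logarithmic-derivative computation then gives the clean formula
\[
\ctA_o(x,q) \;:=\; \tfrac12\bigl(\ctA(x,q) - \ctA(-x,q)\bigr) \;=\; \frac{x\,G'(x,q)}{1 - G(x,q)^2}.
\]

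Next, I would derive a functional equation for~$G$ from the $q$-equation~\eqref{HI-eq} of $\HI = \JI_e$. Dividing $\JI_e(x) - \JI_e(xq) + x^2q\,\JI_e(xq^2) = 0$ by $\JI_e(x)$ and re-expressing the ratios via $\JI_e(xq)/\JI_e(x) = G(x)/x$ and $\JI_e(xq^2)/\JI_e(xq) = G(xq)/(xq)$ produces the nonlinear recursion
\[
G(x,q) \;=\; x \;+\; x\,G(x,q)\,G(xq,q).
\]
Together with $G(0,q)=0$ this determines $G$ uniquely, and a straightforward induction on the $x$-degree shows that $G$ is odd in~$x$ and that each coefficient $g_n(q) := [x^n]\,G(x,q)$ lies in $\zs[q]$. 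Setting $q=1$ in the recursion gives the algebraic equation $G_*(x) = x + x\,G_*(x)^2$ for $G_*(x) := G(x,1)$, whose unique formal solution with $G_*(0) = 0$ is the odd specialization of the Catalan generating function:
\[
G_*(x) \;=\; \frac{1 - \sqrt{1-4x^2}}{2x} \;=\; \sum_{m\ge 0} C_m\,x^{2m+1}.
\]

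Finally, substituting $G_*$ into the formula for $\ctA_o$ and using the consequences $G_*^2 = G_*/x - 1$ and $1 - 2xG_* = \sqrt{1-4x^2}$ of the quadratic equation for $G_*$, a short simplification yields
\[
\ctA_o(x,1) \;=\; \frac{x\,G_*'(x)}{1 - G_*(x)^2} \;=\; \frac{x}{1-4x^2} \;=\; \sum_{m\ge 0} 4^m\,x^{2m+1}.
\]
Since each $g_n(q) \in \zs[q]$, the coefficient $\ctA_{2m}^{FC}(q) = [x^{2m+1}]\,\ctA_o(x,q)$ is itself a polynomial in~$q$; by its combinatorial interpretation it also has nonnegative integer coefficients enumerating fc involutions of $\tA_{2m}$ by length, so its value $4^m$ at $q=1$ is precisely the total number of such involutions, which is therefore finite and equal to $4^m$. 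I expect the main obstacle to be the middle step: deriving the clean recursion for~$G$ from~\eqref{HI-eq} and verifying the polynomial-in-$q$ property of its coefficients, because this is exactly what turns the formal substitution $q \to 1$ into a genuine equality rather than an asymptotic or purely heuristic one.
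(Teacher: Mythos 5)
Your proposal is correct, and it reaches the count $4^m$ by a route that is genuinely different in mechanism from the paper's, although the two share the same backbone. The paper never manipulates $\ctA=-x\,\JI'(x)/\JI(x)$ directly: it observes that the odd part (in $x$) of $\ctA$ coincides with the odd part of $\vcO^*$ (the even terms in Theorem~\ref{thm:At-inv} drop out), imports the quadratic equation $\cM(x)=1+x^2q\,\cM(x)\cM(xq)$ from the earlier work on involutions, evaluates $\cM$ and then $\vcO^*$ at $q=1$ via~\eqref{vcOs-def}, and reads off the odd part $x/(1-4x^2)$ from~\eqref{vcOs-q1}. You instead stay entirely inside the closed form of $\ctA$: the factorization $\JI(\pm x)=\HI(x)\bigl(1\mp G\bigr)$ with $G=x\,\HI(xq)/\HI(x)$ gives the exact identity $\ctA_o=xG'/(1-G^2)$ valid for all $q$, and your recursion $G=x+x\,G(x)G(xq)$, derived from~\eqref{HI-eq}, is precisely the $\cM$-equation in disguise (since $G=x\cM$ by Proposition~\ref{prop:Dyckwalks}), so both proofs ultimately rest on the same Catalan specialization $G(x,1)=\bigl(1-\sqrt{1-4x^2}\bigr)/(2x)$. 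What your route buys is self-containedness (no appeal to $\vcO^*$ or to the external reference for the $\cM$-equation) and a cleaner justification of the substitution $q=1$: the induction showing $[x^n]G\in\zs[q]$ turns the paper's ``limit $q\to 1$'' into an honest specialization of power series with polynomial coefficients, and it also shows directly that each $\ctA_{2m}^{FC}(q)$ is a polynomial, which is the finiteness assertion. What the paper's route buys is brevity: with $\vcO^*$ and $\cM$ already on hand from the affine $\tA$ analysis, the parity comparison finishes the argument in a few lines, at the cost of an extra auxiliary series and a less explicit treatment of the $q\to1$ step. All your computational steps check out (the logarithmic-derivative identity, the recursion, and the final simplification using $G_*^2=G_*/x-1$ and $1-2xG_*=\sqrt{1-4x^2}$).
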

\begin{proof}
  We will prove this using the enumerative results of
{Theorem}~\ref{thm:At-inv}, though there are more direct
  combinatorial arguments based on the structure of these involutions, {see~\cite{BJN-inv}}.

By~\cite[Eq.~(2)]{BJN-inv}, the series $\cM(x)$ satisfies
$$
\cM(x)=1+x^2q\cM(x)\cM(xq).
$$
Hence it is finite at $q=1$, equal to $(1-\sqrt{1-4x^2})/(2x^2)$. Then
it follows from~\eqref{vcOs-def} that at $q=1$,
\beq\label{vcOs-q1}
\vcO^*(x)= \frac 1 2 \left( \frac 1{\sqrt{1-4x^2}}+\frac 1
  {1-2x}\right).
\eeq
We want to determine  the series
$$
\lim_{q\rightarrow 1} \sum_{m\ge 0} \tilde{\cA}^{FC}_{2m}(q)x^{2m+1},
$$
which is the odd part of $\tilde{\cA}(x,q)$, in the limit
$q\rightarrow 1$. 
{Let us compare the expressions of $\ctA$ and $\vcO^*$ in
{Theorem}~\ref{thm:At-inv}: the first and third terms in the
  expression of $\vcO^*$ are even in $x$ (because of the derivative),
  and thus the series we want to compute  is also the odd part of $\vcO^*(x)$}, that is (thanks
to~\eqref{vcOs-q1}), $x/(1-4x^2)$. The result follows.
\end{proof}

\section{Another recursive approach: Peeling a column}
\label{sec:column}
In order to count fc elements in infinite types $\tB$, $\tC$ and
$\tD$, we need to count  alternating
heaps over a path, with no restriction on the number of points in
the first and last columns. 
The recursive approach of the previous section applies, 
but the resulting expressions are complicated and we do not give
them.

Instead, we describe in this section another recursive
approach  to count alternating heaps over a path. The principle
simply consists in deleting the first column of the heap. This is essentially the idea behind the encoding of alternating heaps as paths in~\cite{BJN-long} but we will not use this encoding here.
By forcing the first  and/or last column
to contain at most one point, we  obtain new expressions for the
series $A$ and $\Ba$. Remarkably, these expressions involve negative
powers of the length variable $q$. This phenomenon has already been
observed in polyomino enumeration~(see
eg. \cite{feretic2,feretic} or~\cite[Ex.~5.5.2]{goulden-jackson}).

\subsection{Alternating heaps over a path}
We begin with a new expression for the series $A$.

\begin{Theorem}\label{thm:A-neg}
  The \gf\ $A(x,q)\equiv A$ of fully commutative  elements of type~$A$,
  defined by
$$
A=\sum_{n\ge 0} A_n^{FC}(q) x^n
$$
{and given by~\eqref{A-expr}}, can also be written as
$$
A=\frac{ \sum_{k\ge 0}x^k \sum_{i=0}^k
  {k \qchoose i} {k+1 \qchoose i+1}q^{-i(k-i)}}
{1+ \sum_{k\ge 1}x^k \sum_{i=0}^{k-1}
  {k-1\qchoose i}{k \qchoose i}q^{-i(k-i)}}
$$
where ${k \qchoose i}$ is the $q$-binomial coefficient:
$$
{k \qchoose i}= \frac{(q)_k}{(q)_i(q)_{k-i}}.
$$
\end{Theorem}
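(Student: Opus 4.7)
The approach is recursive, based on peeling off the \emph{leftmost column} of an alternating heap over a path (in contrast with the diagonal peeling of Section~\ref{sec:recursive}). Recall that fc elements of type $A_n$ are in bijection with alternating heaps over the $n$-point path having at most one point in each of their first and last columns. Let $F_k\equiv F_k(x,q)$ denote the generating function for such heaps in which the first column has \emph{exactly} $k$ points (and the last column has size at most~$1$), so that $A = 1 + F_0 + F_1$. Removing an empty leftmost column is a size-shifting bijection that gives $F_0 = xA$, so it suffices to determine~$F_0$.

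Classifying heaps by the size $l$ of their second column, and using the interlocking counts $c_{kl}=2$ for $k=l\ge 1$ and $c_{kl}=1$ for $|k-l|=1$ or $k=l=0$, one obtains the infinite linear system
\begin{align*}
F_0 &= x + x(F_0+F_1),\\
F_1 &= xq + xq(F_0+2F_1+F_2),\\
F_k &= xq^k(F_{k-1}+2F_k+F_{k+1}) \qquad (k\ge 2).
\end{align*}
Bundling the $F_k$'s into the formal series $\Phi(y) = \sum_{k\ge 0} F_k y^k$, this translates into the $q$-functional equation
\[
\Phi(y) = a(y)\,\Phi(qy) + b(y), \qquad a(y) = \frac{x(1+qy)^2}{qy}, \quad b(y) = \frac{x(1+qy)(qy-F_0)}{qy}.
\]
The negative powers of $qy$ appearing in $a(y)$ and $b(y)$ are the source of the negative powers of $q$ in the final formula. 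Iterating along the $q$-orbit $y,\, qy,\, q^2 y,\ldots$ gives
\[
\Phi(y) = \sum_{j\ge 0}\Bigl(\prod_{i=0}^{j-1} a(q^i y)\Bigr)\,b(q^j y),
\]
which converges as a formal power series in $x$ since $a(y)=O(x)$.

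Extracting $F_0 = [y^0]\Phi(y)$ produces a scalar linear equation of the shape $F_0 \cdot \bigl(1+\sum_{k\ge 1} D_k x^k\bigr) = \sum_{k\ge 1} N_k x^k$ for two explicit sequences $N_k$ and $D_k$; combined with $F_0 = xA$, this yields $A = \bigl(\sum_{k\ge 0} N_{k+1}\,x^k\bigr)\big/\bigl(1+\sum_{k\ge 1} D_k\,x^k\bigr)$. It then remains to identify $N_{k+1}$ and $D_k$ with the coefficients of the numerator and denominator in the theorem. The key inputs are the factorization
\[
\prod_{i=0}^{j-1}a(q^i y) = x^j y^{-j} q^{-\binom{j+1}{2}} \prod_{m=1}^j (1+q^m y)^2
\]
and the finite $q$-binomial theorem $\prod_{m=1}^{n}(1+z q^{m-1}) = \sum_i q^{\binom{i}{2}}{n \qchoose i}z^i$. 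Applying the latter twice and extracting the appropriate $y$-coefficients produces
\begin{align*}
N_{k+1} &= \sum_{i=0}^k q^{-i(k-i)}{k \qchoose i}{k+1 \qchoose i+1},\\
D_k &= \sum_{i=0}^{k-1} q^{-i(k-i)}{k-1 \qchoose i}{k \qchoose i},
\end{align*}
where the weights $q^{-i(k-i)}$ arise from the cumulative $q^{-(i+1)}$ factors inherited from $a(y)$.

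The main obstacle is the combinatorial reorganization of these doubly-indexed sums into the compact $q$-binomial form of the theorem; this ultimately comes down to verifying the exponent identity $\binom{a+1}{2}+\binom{b+1}{2}-\binom{k+1}{2}=-b(k-b)$ when $a+b=k$, together with the symmetry ${n \qchoose j} = {n \qchoose n-j}$ of $q$-binomials.
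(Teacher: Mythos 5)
Your proposal is correct and follows essentially the same route as the paper: you peel the leftmost column of alternating heaps over a path with at most one point in the last column, which (after bundling your $F_k$'s into $\Phi(y)$, i.e.\ introducing the catalytic variable marking the first-column size) gives exactly the paper's linear $q$-functional equation~\eqref{eq-tB}, which is then iterated, the constant term in the catalytic variable extracted, and the $q$-binomial theorem applied with the same exponent identity $\binom{a+1}{2}+\binom{b+1}{2}-\binom{k+1}{2}=-a(k-a)$ for $a+b=k$. The only differences are presentational (an infinite system for the $F_k$ versus the equation for $\hat B^{(\rm a)}(u)$ directly, and you bypass the intermediate type-$B$ statement of Theorem~\ref{thm:BL-neg}); also note that your $F_k$ for $k\ge 2$ should be defined with the ``at most one point'' restriction imposed only on the last column, as your recursion in fact assumes.
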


The next \mbmChangenew{theorem} gives a new expression for alternating heaps of
type $B$ (so far expressed by~\eqref{Ba-expr}) and an expression for 
alternating heaps over a path. {By Theorem~3.4
  of~\cite{BJN-long}, these heaps form a subset of fc elements in
  type $\tC$.} As before, the variable $x$ records
the number of vertices of the path (that is, the number of generators in the
group), and $q$ the number of points in the heap (the length of the
corresponding fc element). {By an \emm empty, path, we mean
  the $n$-point path when $n=0$.}

\begin{Theorem}\label{thm:BL-neg}
 Let $N(x)$ and $S(x)$ be the following series:
$$
N(x)= \sum_{k\ge 1} x^k q^{-{k\choose 2}}\frac{(-q)^2_{k-1}}{1-q^k},
$$
and
\beq\label{S-def}
S(x)= N(x/q)-N(xq)= \sum_{k\ge 1} {x^k q^{-{k+1\choose 2}}(-q)_{k-1}(-q)_k}.
\eeq
   The \gf\ of  alternating  heaps of type $B$,  expressed
   in~\eqref{Ba-expr}, can also be written as
\beq\label{B-neg}
\Ba= 1+ S(xq) -  {xS(x)}A,
\eeq
where $A$ is the \gf\ of fc elements of type $A$, expressed in
{Theorem}~\ref{thm:ABD} or alternatively in {Theorem}~\ref{thm:A-neg}.

The \gf\ of  alternating heaps over a non-empty path  is
$$
\AL(x)= N(x)-xS(x)\Ba,
$$
where $\Ba$ is the \gf\ of alternating heaps of type $B$, expressed
by~\eqref{Ba-expr} or alternatively by~\eqref{B-neg}. 
\end{Theorem}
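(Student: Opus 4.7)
The plan is to mimic the column-peeling argument of Section~\ref{sec:recursive}, but to now remove the \emph{first} column of an alternating heap (rather than the rightmost diagonal), since in types $\Ba$ and $L$ the first column may contain arbitrarily many points. I would introduce refined series $\Ba(s)\equiv \Ba(s;x,q)$ and $L(s)\equiv L(s;x,q)$ in which $s^m$ marks the number of points $m$ in the first column. The key combinatorial observation is that if the first column has $m$ points and the second column has $m'$ points, then either one of them is zero (in which case $m,m'$ are otherwise unconstrained and there is a unique zigzag), or both are $\ge 1$ with $|m-m'|\le 1$; moreover, in the latter case there are two admissible zigzag patterns when $m=m'$ and exactly one otherwise. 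Peeling the first column thus yields, after bookkeeping, a three-term recursion mixing $\Ba_{m-1}$, $\Ba_m$, $\Ba_{m+1}$ together with a uniform contribution from the $m'=0$ transitions; this translates into a functional equation schematically of the form $\Ba(s) = f(s) + x g(s)\Ba(1) + h(s)\Ba(sq)$ plus correction terms at $\Ba_0, \Ba_1$.

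Next, iterate the functional equation for $\Ba(s)$ exactly as was done for $A(s)$ in Section~\ref{sec:peel-A}. Each iteration introduces a coefficient in which negative powers of $q$ accumulate (because of a $1/(sq)$ factor created by the $m=m'+1$ transition in the peeling), producing after infinitely many steps the $q^{-\binom{k+1}{2}}$ and $(-q)_{k-1}(-q)_k$ factors that appear in $S(x)$. Setting $s=1$, using the boundary identity $\Ba_0=1+x\Ba(1)$ (heaps with empty first column biject with arbitrary $\Ba$-heaps on one fewer column), and comparing with the explicit formula for $A$ from Theorem~\ref{thm:ABD} then yields $\Ba=1+S(xq)-xS(x)A$: the series $S(xq)$ arises as the ``main'' iterated sum and $-xS(x)A$ as the residual part multiplying $\Ba(1)$.

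The formula for $L$ follows by the same procedure applied to $L(s)$: the only difference is that the absence of any restriction on the last column changes the boundary behavior at the right end, so that the residual term now involves $\Ba$ (rather than $A$), and the main iterated sum becomes $N(x)$ rather than $S(xq)$. The identity $S(x)=N(x/q)-N(xq)$ stated in~\eqref{S-def} ultimately reflects the consistency between these two main sums under the change of the right-end boundary condition.

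The main obstacle is the combinatorial care required when translating the three-term recursion into a single functional equation: the rule $\omega(c,c')=2$ when $c=c'\ge 1$, together with the special role played by $c=0$, produces nontrivial boundary corrections to the usual $s\mapsto sq$ shift; and the iteration is only convergent as a series in $x$ (not in $q$) because of the negative powers of $q$ generated at each step. Verifying that the iterated series can be rewritten in closed form as $N(x)$ or $S(x)$, and recognizing $S(x)=N(x/q)-N(xq)$ as the intrinsic compatibility identity, is the key technical $q$-series manipulation.
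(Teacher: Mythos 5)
You follow the same route as the paper: peel off the first column of the heap, mark the size of that column with an extra catalytic variable, derive a linear $q$-functional equation, and iterate it (this is exactly Section~\ref{sec:column}, culminating in~\eqref{eq-tB} and its analogue for $\AL$). However, your combinatorial bookkeeping at the empty-column boundary contains a genuine error, and it is not cosmetic: it changes the output of the computation. You assert that if one of two adjacent columns is empty then the other is ``otherwise unconstrained'', and accordingly you use the boundary identity $\Ba_0=1+x\Ba(1)$, i.e.\ that $\Ba$-heaps with empty first column biject with arbitrary $\Ba$-heaps on one fewer vertex. Definition~\ref{def:altheaps} forbids this: the chain $H_{st}$ must alternate, so a column adjacent to an empty column carries at most one point. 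Hence deleting the empty first column of a $\Ba$-heap leaves a heap with at most one point in its first \emph{and} last columns, i.e.\ a type-$A$ heap, so the correct identity is $\Ba_0=1+xA$; likewise an $\AL$-heap with empty first column leaves a reflected $\Ba$-heap. This empty-column constraint is precisely the mechanism by which $A$ enters~\eqref{B-neg} and $\Ba$ enters the formula for $\AL$. Carrying your identity through the iteration would essentially replace $A$ by $\Ba$ in~\eqref{B-neg}, a false relation since $\Ba\neq A$ (already at order $x^2$: two points over the first vertex, one over the second). Your stated reason why $\Ba$ appears in the $\AL$-formula, a change of behaviour ``at the right end'', is the right answer for the wrong reason: it is the left-end constraint just described (no point restriction remains at the right end, so the residual heap is a reflected $\Ba$-heap rather than an $A$-heap).

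A related structural point: once the boundary is treated correctly, the peeling equation reads $\tBa(u)=x(1+uq)-\frac{x^2(1+uq)}{uq}\,A+\frac{x(1+uq)^2}{uq}\,\tBa(uq)$, in which the previously determined series $A$ is a \emph{known} inhomogeneous term; there is nothing to solve for, one simply iterates and sets $u=1$ (coefficient extraction in $u$ is only needed afterwards, for Theorem~\ref{thm:A-neg}). Your schematic equation featuring an unknown $\Ba(1)$ imports the structure of the diagonal-peeling equations of Section~\ref{sec:recursive}, where a ``solve at $s=1$'' step is genuinely needed, but it does not belong here. Two smaller slips to fix when you actually perform the iteration: the factor $1/(uq)$ is created by the transition in which the new first column has one point \emph{fewer} than the column it is glued onto (your $m=m'-1$, not $m=m'+1$), and the case $k=1$, where the option ``$k-1$ points'' yields an empty new column already counted in the empty-first-column class, must be subtracted (this is the source of the correction $-x(A-1-xA)$). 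Tracking these is what makes the iterated sums come out exactly as $S$ and $N$, and then $S(x)=N(x/q)-N(xq)$ is a direct series identity rather than a ``consistency principle'' between boundary conditions.
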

\noindent{{\bf Remark.} As noted already, the expressions of
  $A, B$ and $D$ given in  {Theorem}~\ref{thm:ABD} define series in $x$
  with rational coefficients in $q$, with poles at roots of unity ---
  even though we know from combinatorial reasons that these
  coefficients must be polynomials in $q$. In particular, it is not
  immediate to derive from {Theorem}~\ref{thm:ABD} the value at
  $q=1$ of these three series. Now with {Theorem}~\ref{thm:A-neg}, we have
 an expression of $A$ as a series in $x$ whose coefficients
  are Laurent polynomials in $q$, and the specialization $q=1$ is
  straightforward: the numerator of $A$ becomes
$$
\sum_{k\ge0} x^k {2k+1 \choose k}= \frac 1 {2x} \left( \frac 1
  {\sqrt{1-4x}}-1\right),
$$
and its denominator
$$
1+\sum_{k\ge 1} x^k {2k-1\choose k}= \frac 1 {2} \left( \frac 1
  {\sqrt{1-4x}}+1\right),
$$
so that we recover the Catalan \gf\ for $A(x,1)$:
$$
A(x,1)= \sum_{n\ge 0} A_n^{FC}(1) x^n= \frac{1-2x-\sqrt{1-4x}}{2x^2}
=\sum_{n\ge 0} \frac 1 {n+2}{2n+2\choose n+1}x^n.
$$
Similarly,~\eqref{B-neg} expresses $\Ba$ as a series in $x$ whose
coefficients are Laurent polynomials in $q$. From this and~\eqref{Bna-A}, one can derive
a similar expression for $B=\Ba+\Bna$, and finally for $D$ using~\eqref{D-sol}. In
these expressions, one can set $q=1$ to recover Stembridge's
enumeration of fc elements in $B_n$ and $D_{n+1}$ (see~\cite{St3}). For
the series $L(x)$ counting alternating heaps, we obtain the following periodicity result.
\begin{Corollary}\label{cor:L}
  Write $L(x)=\sum_{n\ge 1} L_n x^n$, where $L_n\equiv L_n(q)$  counts
  alternating heaps over the $n$-point path. Then $L_n$ is a rational
  fraction in $q$
  of denominator $(1-q^n)$. Consequently, the sequence of its
  coefficients is ultimately periodic, with period dividing $n$.
\end{Corollary}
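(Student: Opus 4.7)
The plan is to combine the two identities of Theorem~\ref{thm:BL-neg}, namely $L(x) = N(x) - xS(x)\Ba$, with the explicit formulas for $N(x)$ and $S(x)$, and to track how denominators behave upon extracting the coefficient of $x^n$. The key observation will be that only the series $N(x)$ contributes a pole in $q$, and that pole is precisely of the required form $1/(1-q^n)$.

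First I would read off from the defining series of $N(x)$ that
\[
[x^n]N(x)=\frac{q^{-\binom{n}{2}}(-q)_{n-1}^2}{1-q^n},
\]
a rational function of $q$ whose only pole (up to a unit $q^{\binom{n}{2}}$) is at the $n$-th roots of unity, coming from the factor $1-q^n$. Next, from~\eqref{S-def},
\[
[x^n]\bigl(xS(x)\bigr)=[x^{n-1}]S(x)=q^{-\binom{n}{2}}(-q)_{n-2}(-q)_{n-1}\in\mathbb{Z}[q,q^{-1}]
\]
is a Laurent polynomial in $q$. Since the coefficients of $\Ba$ in $x$ are length generating functions of \emph{finite} sets of fc elements of type $B_k$, they are genuine polynomials in $q$; hence the Cauchy product
\[
[x^n]\bigl(xS(x)\Ba\bigr)=\sum_{k=2}^{n}[x^k]\bigl(xS(x)\bigr)\cdot[x^{n-k}]\Ba
\]
is a finite sum of products of Laurent polynomials by polynomials, hence itself a Laurent polynomial in~$q$.

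Combining these two observations gives $L_n = P(q)/(1-q^n)$ for some $P(q) \in \mathbb{Z}[q,q^{-1}]$, which is the first assertion. For the periodicity claim I would expand $1/(1-q^n) = \sum_{k\ge 0} q^{kn}$: writing $P(q) = \sum_{j=\alpha}^{\beta} p_j q^j$, one obtains
\[
[q^m]L_n=\sum_{\substack{\alpha\le j\le\beta\\ j\equiv m\!\!\pmod n\\ j\le m}}p_j,
\]
and for $m\ge\beta$ the constraint $j\le m$ is automatic, so the coefficient depends on $m$ only through $m\bmod n$, yielding ultimate periodicity with period dividing~$n$.

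There is no substantial obstacle in this argument; it is essentially bookkeeping once Theorem~\ref{thm:BL-neg} is available. The one point that deserves care is the claim that $\Ba$ has polynomial (not merely Laurent polynomial) coefficients in $q$, which follows from the finiteness of $B_k$ and hence of its set of fc elements of any given length.
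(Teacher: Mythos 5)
Your argument is essentially the paper's own proof: both start from the identity $L(x)=N(x)-xS(x)\Ba$ of Theorem~\ref{thm:BL-neg}, observe that the factor $1-q^n$ comes only from $[x^n]N(x)$, that $xS(x)$ has Laurent polynomial coefficients, and that $\Ba$ has polynomial coefficients because the groups $B_k$ are finite; your periodicity bookkeeping at the end is just a spelled-out version of the standard consequence.

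One small step is missing, though. What you actually establish is $L_n=P(q)/(1-q^n)$ with $P\in\mathbb{Z}[q,q^{-1}]$ a \emph{Laurent} polynomial (the factors $q^{-\binom{n}{2}}$ do not cancel for free), i.e.\ a fraction with denominator $q^e(1-q^n)$ in lowest polynomial terms; this is strictly weaker than the first assertion of the corollary, which claims denominator $(1-q^n)$. The gap is closed by the combinatorial remark the paper makes explicitly: $L_n$ counts alternating heaps by their number of points, hence is a power series in $q$ with no negative powers, so $q=0$ cannot be a pole; equivalently $P=(1-q^n)L_n$ has no negative powers and is therefore a genuine polynomial. Your periodicity conclusion is unaffected by this (it works for Laurent $P$ as well), but the statement "which is the first assertion" needs this extra line to be literally correct.
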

\begin{proof}
  The expression of $L(x)$ given in
  \mbmChangenew{Theorem}~\ref{thm:BL-neg} shows that $L_n$ is
  a fraction in $q$ with denominator $q^e (1-q^n)$, for some
  integer $e$ (indeed, this holds in $N(x)$, and the series $S$
  and $\Ba$ have polynomial coefficients). But then the combinatorial
 description of $L_n$ shows that $q=0$ cannot be a pole.
\end{proof}
}
\begin{proof}[Proof of {Theorems~\ref{thm:A-neg} and~\ref{thm:BL-neg}}]
    Let us begin with alternating heaps of type $B$, {excluding the
    case $B_0$}, and
    denote their \gf\ by $\tBa:=\Ba-1$. We enrich this series
  by taking into account  (with a new variable $u$) the number of occurrences of the ``leftmost'' generator,
  denoted~$t$ in Figure~\ref{fig:dynkin}. We denote by $n$ the number of
  generators, and partition
  the set of such   heaps into four classes: 
  \begin{itemize}
  \item if the first column is empty, $t$ does not occur. By
    Definition~\ref{def:altheaps} of alternating heaps, the second
    column contains at most one point, so that deleting the first
    column leaves  an alternating
    heap of type $A_{n-1}$; the \gf\ for this class is thus $xA$;
\item from now on $t$ occurs. If $n=1$ then the only element that contributes is $t$, with
  \gf\ $xuq$;
\item from now on $n\ge 2$, and $t$ occurs. If the second column is
  empty, that is, the generator $s_1$
  does not occur, then there is only one occurrence of $t$, followed
  by an empty column and an alternating heap of type $A_{n-2}$.  The
  \gf\ is thus $x^2uq A$; 
\item otherwise, deleting every point of the first column leaves an
  alternating heap of type $B_{{n-1}}$ with a non-empty first
  column. The \gf\ of such heaps is $\tBa(u)-xA$. Conversely, an alternating heap of type $B_{n-1}$ with $k$ points in the
  first column gives rise to four alternating heaps by adding a column: two
  with $k$ points in the first column, one with $k-1$ points and one with
  $k+1$ points (Figure~\ref{fig:B-alt}). However, if   $k=1$,  the choice of
  $k-1$ occurrences of
  $t$ gives  a heap with no occurrence of $t$, which we
  have already counted in the first class. Hence the contribution of this last  class is:
$$
x\left(\frac1{uq} +2 +uq\right) \left( \tBa(uq)-xA\right) -x (A-1-xA),
$$
since $A-1-xA$ counts alternating heaps of type $A$ with
exactly one point in the first column.
  \end{itemize}

\begin{figure}[ht]
\begin{center}
\includegraphics[scale=0.9]{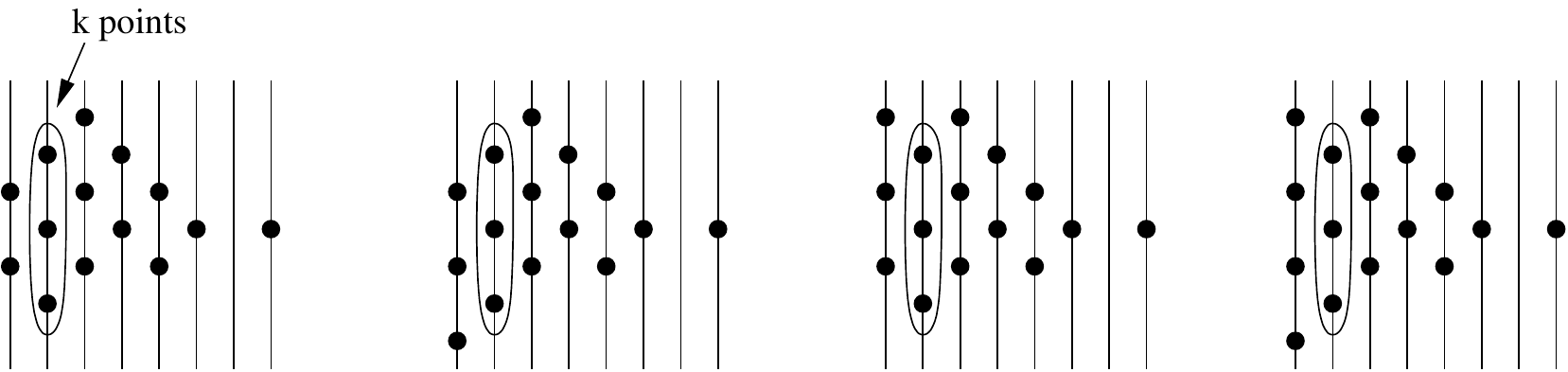}
\caption{{Adding} a column to the left of  an alternating heap of type $B$.} 
\label{fig:B-alt}
\end{center}
\end{figure}

\noindent Putting  together these four cases gives
\beq\label{eq-tB}
\tBa(u)= x(1+uq) - \frac{x^2(1+uq)}{uq} A +
\frac{x(1+uq)^2}{uq}\tBa(uq).
\eeq
This equation can be solved using the iteration method of Section~\ref{sec:peel-A}:
\begin{align}
  \tBa(u)&= \sum_{k\ge 1} x^{k-1}u^{-k+1}q^{-{k\choose
    2}}(-uq)_{k-1}^2\left( 
x(1+uq^k)-\frac{x^2(1+uq^k)}{uq^k}A\right) \nonumber\\
&= \sum_{k\ge 1} x^k u^{-k+1} q^{-{k\choose 2}}
  (-uq)_{k-1}(-uq)_k
-A \sum_{k\ge 1} x^{k+1} u^{-k} q^{-{k+1\choose 2}}
  (-uq)_{k-1}(-uq)_k.\label{Balt-u}
\end{align}
Setting $u=1$ and adding the contribution of the empty heap gives~\eqref{B-neg}.

\medskip
We can now derive the expression of $A$ given in
{Theorem}~\ref{thm:A-neg}. The series  $\tBa(0)$ counts heaps with an empty first
column, and  thus coincides with the series $xA$. Let us
generalize~\eqref{S-def} by denoting
$$
S(x,u)=\sum_{k\ge 1} x^{k} u^{-k} q^{-{k+1\choose 2}}  (-uq)_{k-1}(-uq)_k,
$$
so that~\eqref{Balt-u} reads
$$
\tBa(u)= uS(xq,u)-xA\, S(x,u).
$$
 Extracting from this the coefficient of $u^0$  gives
$$
xA=\tBa(0)={[u^{-1}]S(xq,u) } {-}  xA [u^0] S(x,u),
$$
and hence
$$
xA=\tBa(0)=\frac{[u^{-1}]S(xq,u) }{1+ [u^0] S(x,u)}.
$$
Performing the coefficient extraction explicitly gives
{Theorem}~\ref{thm:A-neg}, using
$$
(-uq)_k=\sum_{i=0}^k u^i q^{i+1\choose 2} {k\qchoose i}.
$$

 \medskip

  We now {apply}
 column peeling to general
  alternating heaps over the $n$-point path, counted by the series
  $L$. There are no more constraints on the first and last
  columns. As before, an additional variable $u$
  records the number of points in the first column. The four cases
  listed in the derivation of~\eqref{eq-tB} are transformed as follows:
 \begin{itemize}
  \item if the first column is empty, then the second must contain at
    most one point. Thus what remains after deleting the first column
    is an alternating heap of type $B_{{n-1}}$, reflected in a vertical
    line. The \gf\ for this first case is therefore $x\Ba$;
\item from now on the first column is non-empty. If $n=1$ then all heaps are
  alternating, with   \gf\  $xuq/(1-uq)$;
\item assume the second column is empty. Then there is  just one point in
  the first column, followed by an empty column, and then a
  reflected alternating heap of type $B_{{n-2}}$.  The
  \gf\ is $x^2uq \Ba$; 
\item finally, if there are $k$ points in the second column, with $k\ge 1$, then
  we can have $k-1$, $k$ or $k+1$ points in the first column. As
  before, the case where
  $k=1$ and the first column is empty has  already been counted. Hence
  the contribution of this final case is:
$$
x\left(\frac1{uq} +2 +uq\right) \left( \AL(uq)-x\Ba\right) -x (\Ba-1-x\Ba),
$$
since $\Ba-1-x\Ba$ counts alternating heaps of type $B$, reflected in
a vertical line, and having exactly one point in the first column.
 \end{itemize}

\noindent Putting  together these four cases gives
$$ 
\AL(u)= \frac x{1-uq} - \frac{x^2(1+uq)}{uq} \Ba +
\frac{x(1+uq)^2}{uq}\AL(uq).
$$ 
Iterating  yields
$$ 
\AL(u)= \sum_{k\ge 1} x^k u^{-k+1} q^{-{k\choose 2}}
 \frac{{(-uq)_{k-1}}^2}{1-uq^k}
-\Ba \sum_{k\ge 1} x^{k+1} u^{-k} q^{-{k+1\choose 2}}
  (-uq)_{k-1}(-uq)_k.
$$ 
Setting $u=1$ gives the last result of \mbmChangenew{Theorem}~\ref{thm:BL-neg}.
\end{proof}

\subsection{Self-dual alternating heaps over a path}

We now restrict the enumeration to self-dual heaps.
We begin with a new expression for the series ${\mathcal A}$, already
expressed in {Theorem}~\ref{thm:ABD-inv}.

\begin{Theorem}\label{thm:newinvolA}
The generating function ${\mathcal A}(x,q)\equiv{\mathcal A}$ of fully
commutative involutions of type $A$, defined by 
$$
\cA=\sum_{n\geq0}{\mathcal A}_n^{FC}(q)x^n
$$
{and given by~\eqref{cA-expr}}, can also be written as:
$$
\cA=\frac{\sum_{j\geq0}x^j
{j \qschoose \lfloor   j/2\rfloor}
\,q^{-\lfloor
    j/2\rfloor^2+\chi(j\,\mbox{odd})}}{1+\sum_{j\geq1}(-x)^j
{j-1 \qschoose \lfloor    j/2\rfloor}
\,q^{-\lfloor j/2\rfloor^2}}
$$ 
where $k \qschoose i$ is the $q^2$-binomial coefficient:
$$
{k \qschoose i}= \frac{\llp q^2\rrp _k}{\llp q^2 \rrp_i \llp
  q^2\rrp_{k-i}}
= \frac{(q^2;q^2)_k}{(q^2;q^2)_i(q^2;q^2)_{k-i}}.
$$
\end{Theorem}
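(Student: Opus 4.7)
The plan is to adapt the column-peeling argument of Section~\ref{sec:column} (used to prove Theorem~\ref{thm:A-neg}) to the self-dual setting. I introduce $\cBa(u)\equiv\cBa(u;x,q)$, the refinement of $\cBa$ in which the auxiliary variable $u$ tracks the number of points in the leftmost ($t$-)column, derive a linear $q$-functional equation for $\cBa(u)$ by peeling off that column, and then extract the coefficient of $u^0$ to recover $\cA$.

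The key new structural ingredient is that, in a \emph{self-dual} alternating heap, two adjacent columns that are both non-empty must differ in size by \emph{exactly} $1$. Indeed, the chain $H_{ts_1}$ of points labeled $t$ or $s_1$ is totally ordered with alternating labels, and self-duality provides a label-preserving, order-reversing automorphism acting on it as chain reversal, so its label sequence must be palindromic. An alternating two-symbol sequence is palindromic iff it has odd length, so $a+b$ is odd whenever both $a,b\ge 1$; combined with the standard $|a-b|\le 1$, this forces $|a-b|=1$. The allowed pairs $(a,b)$ of column sizes are therefore $(0,0)$, $(0,1)$, $(1,0)$, and $(b\pm1,b)$ for $b\ge1$, and in each case the $(t,s_1)$-interleaving is uniquely determined (there is no twofold choice as in the non-self-dual case). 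Peeling the $t$-column then yields
\[
\cBa(u)=1+x\Bigl(1-\tfrac{1}{uq}\Bigr)\cBa_0+x\Bigl(uq+\tfrac{1}{uq}\Bigr)\cBa(uq),
\]
where $\cBa_0:=[u^0]\cBa(u)=1+x\cA$: a heap counted by $\cBa_0$ has an empty $t$-column, hence at most one point in the $s_1$-column (by alternation) and, by definition of $\cBa$, at most one in the last column, so deleting the empty leftmost column produces an fc involution of type $A_{n-1}$.

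Iterating the functional equation in the $x$-adic topology gives
\[
\cBa(u)=\sum_{k\ge 0}\frac{x^k P_k(u)}{u^k q^{k(k+1)/2}}\Bigl(1+x\Bigl(1-\tfrac{1}{uq^{k+1}}\Bigr)\cBa_0\Bigr),
\]
where $P_k(u):=\prod_{i=1}^{k}(1+u^2q^{2i})$. The $q^2$-binomial theorem expands $P_k(u)=\sum_{j=0}^{k}{k\qschoose j}q^{j^2+j}u^{2j}$, and extracting $[u^0]$ on both sides gives a linear equation for $\cBa_0$ of the form $\cBa_0(1-xE(x)+xO(x))=E(x)$, with
\[
E(x)=\sum_{m\ge 0}x^{2m}{2m\qschoose m}q^{-m^2},\qquad O(x)=\sum_{m\ge 0}x^{2m+1}{2m+1\qschoose m+1}q^{-(m+1)^2}.
\]
The denominator $1-xE(x)+xO(x)$ already matches the claimed $D$ after sorting terms by parity of $j$. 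Solving for $\cA=(\cBa_0-1)/x$ gives $\cA\cdot D=(E-1)/x+E-O$: the even part of the right-hand side is $E$, matching the even-$j$ part of $N$; its $x^{2m+1}$-coefficient equals $q^{-(m+1)^2}\bigl({2m+2\qschoose m+1}-{2m+1\qschoose m+1}\bigr)$, which by the $q^2$-Pascal rule ${n+1\qschoose k}-{n\qschoose k}=q^{2(n+1-k)}{n\qschoose k-1}$ simplifies to $q^{-m^2+1}{2m+1\qschoose m}$, matching the odd-$j$ part of $N$.

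The main obstacle is establishing the structural lemma forcing exact size difference $1$ between adjacent non-empty columns in a self-dual alternating heap, together with the uniqueness of the interleaving; once these are in place, the rest of the argument is a direct parallel of the proof of Theorem~\ref{thm:A-neg} followed by a short $q$-series identification to match the claimed closed form.
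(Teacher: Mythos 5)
Your proposal is correct and follows essentially the same route as the paper: peel the leftmost column of self-dual alternating heaps of type $B$ with $u$ marking its size, use the observation that two adjacent non-empty columns of a self-dual alternating heap cannot have equal size (your functional equation for $\cBa(u)$ is algebraically identical to the paper's equation for $\hat{\mathcal B}^{(\rm a)}(u)$), iterate, and extract the constant term in $u$ via the $q^2$-binomial theorem, using $[u^0]\cBa(u)=1+x\cA$. The only cosmetic differences are that you justify the ``unequal adjacent columns'' observation explicitly via palindromicity of the chain $H_{ts_1}$, and that your bookkeeping requires a final $q^2$-Pascal step to match the stated numerator, which the paper's arrangement of the iterated sum avoids.
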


The next \mbmChangenew{theorem} gives a new expression for self-dual alternating
heaps of type $B$ and an expression for self-dual alternating  heaps
over a path. {These heaps  form a subclass of fc involutions in
  type $\tC$.}

\begin{Theorem}\label{prop:newinvolBetsegment}
Let $\mathcal{N}(x)$ and  $\mathcal{S}(x)$ be the following series:
\beq\label{cN-def}
\mathcal{N}(x)=\sum_{k\geq1}x^kq^{-\bi{k}{2}}\frac
{\llp -q^2 \rrp_{k-1}}
{1-q^k},
\eeq
\beq\label{cS-def}
\mathcal{S}(x)=\mathcal{N}(x/q)-\mathcal{N}(x)
=\sum_{k\geq1}x^kq^{-\bi{k+1}{2}}
{\llp -q^2 \rrp_{k-1}}.
\eeq
The generating function of self-dual alternating heaps of type $B$,
already given by~\eqref{cBalt}, can also be written as
\beq\label{cBa-new}
\cBa 
=1+ \cS(xq)+\cS(xq^2) -x \left( \cS(x)-\cS(xq)\right) \cA,
\eeq
where $\mathcal{A}$ is the generating function of 
 fully commutative involutions of
type $A$, given in {Theorem}~\ref{thm:ABD-inv} or
alternatively in {Theorem}~\ref{thm:newinvolA}. 

The generating function for  self-dual alternating heaps
over a  {non-empty} path is  
$$
\mathcal{L}=\mathcal{N}(x)-x(\mathcal{S}(x)-\mathcal{S}(xq))\cBa,
$$
where $\cBa$
 is the generating function 
of  self-dual
alternating heaps of type $B$, given by~\eqref{cBalt} or
alternatively by~\eqref{cBa-new}. 
\end{Theorem}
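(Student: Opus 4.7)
The plan is to mirror the column-peeling argument used to prove Theorems~\ref{thm:A-neg} and~\ref{thm:BL-neg}, with one crucial combinatorial twist: in a self-dual heap, any alternating chain $H_{ts_1}$ between two adjacent non-empty columns must be palindromic, and a strictly alternating chain on two distinct labels is palindromic if and only if its length is odd. Consequently, if the second column contains $k$ points, then the first column must contain exactly $k-1$ or $k+1$ points, rather than the four possibilities $k-1,k,k,k+1$ of the non-involution case.

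To establish the first identity, I would refine $\cBa$ into a series $\cBa(u)$ that tracks with a new variable $u$ the number of points in the leftmost column (labeled $t$), and set $F(u):=\cBa(u)-1$. Peeling the first column partitions non-empty heaps into four cases: (a)~first column empty, contributing $x\cA$ (alternation of $H_{ts_1}$ forces $|H_{s_1}|\le 1$, so the remainder is a self-dual type-$A$ heap); (b)~$n=1$, where the type-$B$ constraint $|H_t|\le 1$ (the column $t$ is then also the last column) restricts us to the singleton $\{t\}$, contributing~$xuq$; (c)~$n\ge 2$ with a single $t$-point and empty second column, contributing $x^2uq\cA$; (d)~both first columns non-empty, which by the palindrome constraint contributes $x(1/(uq)+uq)(F(uq)-x\cA)$ minus a correction $x(\cA-1-x\cA)$ that removes the forbidden $k=1$, $|H_t|=0$ extension. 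This last correction relies on the self-dual analogue of the coincidence $[v^1]\tBa(v)=A-1-xA$ used in the proof of Theorem~\ref{thm:A-neg}.

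Combining the four cases yields the functional equation
\[
F(u) \;=\; x(1+uq)\,+\,x^2\cA\bigl(1-1/(uq)\bigr)\,+\,x\bigl(1/(uq)+uq\bigr)F(uq),
\]
which I would iterate, letting the remainder term vanish $x$-adically. Using the telescoping identity $\prod_{j=1}^k(1/q^j+q^j)=\llp -q^2\rrp_k\,q^{-\binom{k+1}{2}}$, the evaluation at $u=1$ becomes a sum of two pieces, which by elementary index shifts are identified with $\cS(xq)+\cS(xq^2)$ (coming from the ``$x(1+uq)$'' part) and $x\cA(\cS(xq)-\cS(x))$ (coming from the ``$x^2\cA$'' part); adding $1$ yields the announced expression for $\cBa$.

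The proof of the $\cL$-formula is entirely parallel, adjusting the four cases to: (a)~$x\cBa$ (horizontal reflection commutes with self-duality, so the remaining heap is a reflected self-dual type-$B$ heap); (b)~$xuq/(1-uq)$ (no last-column constraint, so all chains of $t$'s are now admitted); (c)~$x^2uq\cBa$; (d)~$x(1/(uq)+uq)(\cL(uq)-x\cBa)-x(\cBa-1-x\cBa)$. The resulting functional equation
\[
\cL(u) \;=\; \frac{x}{1-uq}\,+\,x^2\cBa\bigl(1-1/(uq)\bigr)\,+\,x\bigl(1/(uq)+uq\bigr)\cL(uq)
\]
iterates to give $\cL=\cN(x)-x(\cS(x)-\cS(xq))\cBa$; the extra factor $1/(1-q^{k+1})$ arising from case~(b) is exactly what converts the ``$\cS(xq)+\cS(xq^2)$'' piece into $\cN(x)$. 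The main obstacle throughout is case~(d): confirming the palindrome/odd-length dichotomy rigorously, and justifying the correction identities $[v^1]F(v)=\cA-1-x\cA$ and its $\cBa$-analogue via a direct bijection adapted from the non-involution setting of Section~\ref{sec:column}.
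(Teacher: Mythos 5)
Your proposal follows essentially the same route as the paper's proof: peel the first column while tracking its size with a variable $u$, split into the same four cases (with the self-duality observation that adjacent columns cannot have equal size, which you justify via the palindromic-chain argument the paper leaves implicit), arrive at the same functional equations for $\hat{\cB}^{(\rm a)}(u)$ and $\cL(u)$, iterate, and set $u=1$. The identifications of the resulting sums with $\cS(xq)+\cS(xq^2)$, $x(\cS(x)-\cS(xq))$ and $\cN(x)$ are correct (up to a harmless index slip: the factor from the inhomogeneous term of the $\cL$-equation at step $k$ is $1/(1-q^{k})$, not $1/(1-q^{k+1})$), so the argument matches the paper's.
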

\begin{proof}[Proof of {Theorems~\ref{thm:newinvolA}
  and~\ref{prop:newinvolBetsegment}}]
We begin with self-dual alternating heaps of type~$B$, {excluding again
the case $B_0$}, and
denote their generating function by
$\hat{\mathcal{B}}^{\rm (a)}
:=\cBa-1$. 
We count them by
specializing to self-dual heaps the argument that led to~\eqref{eq-tB} for
alternating heaps of type $B$. The first three cases described there
now contribute $x \cA$, $xuq$ and $x^2uq \cA$ respectively. In the
fourth (and last) case, we observe that, in a self-dual heap, the
first two columns cannot both contain  $k$ points.
Hence the contribution of the fourth case is now
$$
x\left(\frac{1}{uq}+uq\right)\left(\hat{\mathcal{B}}^{\rm(a)}(uq)-x\mathcal{A}\right)-x(\mathcal{A}-1-x\mathcal{A}).
$$
Putting together the four cases gives
$$
\hat{\mathcal{B}}^{\rm(a)}(u)=x(1+uq)-\frac{x^2}{uq}(1-uq)\mathcal{A}
+\frac{x}{uq}(1+u^2q^2)\,\hat{\mathcal{B}}^{\rm(a)}(uq).
$$
 By iterating,  we obtain:
\begin{multline}
\label{altBversion2enu}
\hat{\mathcal{B}}^{\rm(a)}(u)=\sum_{k\geq1}x^ku^{1-k}q^{-\bi{k}{2}}(1+uq^k)
\llp -u^2q^2 \rrp_{k-1}\\
-\mathcal{A}\sum_{k\geq1}x^{k+1}u^{-k}q^{-\bi{k+1}{2}}(1-uq^k)
\llp -u^2q^2 \rrp_{k-1}.
\end{multline}
Setting $u=1$ and adding the contribution of the empty heap gives the
expression~\eqref{cBa-new} of $\cBa$.

\medskip
We can now  derive the alternative expression of $\cA$ given in
{Theorem}~\ref{thm:newinvolA}. The series $\hat{\mathcal{B}}^{\rm(a)}(0)$
counts heaps with an empty first column, and thus coincides with the
series $x\mathcal{A}$. Let us generalize~\eqref{cS-def} by denoting
$$
\mathcal{S}(x,u)=\sum_{k\geq1}x^ku^{-k}q^{-\bi{k+1}{2}}
 \llp -u^2q^2 \rrp _{k-1},
$$
so that~\eqref{altBversion2enu} reads 
\begin{equation}\label{Bachapeauenu}
\hat{\mathcal{B}}^{\rm(a)}(u)=u\mathcal{S}(xq,u)+u^2\mathcal{S}(xq^2,u)
-x\mathcal{A}\left( \mathcal{S}(x,u)-u\mathcal{S}(xq,u)\right).
\end{equation}
Extracting the coefficient of $u^0$ gives
$$
x\mathcal{A}=\hat{\mathcal{B}}^{\rm(a)}(0)=\frac{[u^{-1}]\mathcal{S}(xq,u)+[u^{-2}]\mathcal{S}(xq^2,u)}{1+[u^0]\mathcal{S}(x,u)-[u^{-1}]\mathcal{S}(xq,u)}. 
$$
The coefficient extraction is performed explicitly thanks to
$$
\llp -u^2q^2 \rrp_k= \sum_{i=0}^k u^{2i} q^{i(i+1)} {k \qschoose i},
$$
and one thus obtains {Theorem}~\ref{thm:newinvolA}. 

\medskip
Our final step is to count self-dual alternating heaps, by
specializing to the self-dual case the argument that led
to~\eqref{eq-tB} for general alternating heaps. The first three cases
described there  now contribute $x \cBa$, $xqu/(1-uq)$ and $x^2uq
\cBa$, respectively. In the fourth case, we must exclude heaps in
which the first two columns would have the same size. Hence the
contribution of the fourth case is now:
$$
x\left(\frac{1}{uq}+uq\right)\left(\mathcal{L}(uq)-x\mathcal{B}^{\rm(a)}\right)
-x\left(\mathcal{B}^{\rm(a)}-1-x\mathcal{B}^{\rm(a)}\right). 
$$
Putting together the four cases gives
\beq\label{cL-eq}
\mathcal{L}(u)=\frac{x}{1-uq}-\frac{x^2}{uq}(1-uq)\mathcal{B}^{\rm(a)}+\frac{x}{uq}(1+u^2q^2)\mathcal{L}(uq).
\eeq
Iterating yields
\begin{align}
\mathcal{L}(u)&=\sum_{k\geq1}x^ku^{1-k}q^{-\bi{k}{2}}\frac{
\llp -u^2q^2 \rrp_{k-1}
}{1-uq^k}
-\mathcal{B}^{\rm(a)}\sum_{k\geq1}x^{k+1}u^{-k}q^{-\bi{k+1}{2}}(1-uq^k)
\llp -u^2q^2 \rrp_{k-1}, \nonumber
\\
&=\sum_{k\geq1}x^ku^{1-k}q^{-\bi{k}{2}}\frac{\llp -u^2q^2 \rrp_{k-1}}{1-uq^k}
-x\left( \cS(x,u)-u\cS(xq,u)\right)\mathcal{B}^{\rm(a)}. \label{altSegmentversion2enu}
\end{align}
Setting $u=1$ gives the second result of \mbmChangenew{Theorem}~\ref{prop:newinvolBetsegment}.
\end{proof}

Later we will need the following result,
 which gives the generating
functions of self-dual alternating heaps over a path, with
 parity constraints on the first and/or last columns. Recall
 the notation  $F_e(x)$ and  $F_o(x)$ for the even and odd parts (in
 $x$) of a
 series $F(x)$ (see~\eqref{eop}).

\begin{Proposition}\label{prop:newinvolBetsegmentOdd}
The generating function of self-dual alternating heaps of type $B$
having an odd number of points in the first column, given
by~\eqref{CBaodd}, admits the following alternative expression:
\beq\label{CBaodd-new}
\cBa_{\mbox{\tiny \rm odd}}=\cS_e(xq)+\cS_o(xq^2)-x \left(
  \cS_o(x)-\cS_e(xq)\right) \cA,
\eeq
where $\cS$ is defined by~\eqref{cS-def} and $\cA$ counts fc
involutions of type $A$ ({Theorems}~\ref{thm:ABD} and~\ref{thm:newinvolA}).

The generating function of self-dual alternating heaps over a path,
having an odd number of points in the first column, is
$$
\mathcal{L}_{\mbox{\tiny \rm odd}}=\frac{1}{2}\left(\mathcal{N}(x)+\tilde
  \cN(-x)\right)
- x \left(
  \cS_o(x)-\cS_e(xq)\right) \cBa,
$$
where $\cN$ is defined by~\eqref{cN-def}, $\cBa$ counts
{self-dual} alternating heaps of type~$B$ (see~\eqref{cBalt} and~\eqref{cBa-new}), and
$$
\tilde \cN
(x)=\sum_{k\geq1}x^kq^{-\bi{k}{2}}\frac{
\llp -q^2 \rrp_{k-1}
}{1+q^{k}}.
$$

The generating function of self-dual alternating heaps over a path
having an odd number of points in the first and  last
columns is  
$$ 
\mathcal{L}^{\mbox{\tiny \rm odd}}_{\mbox{\tiny \rm odd}}=\frac 1 2 \left( \cN_o(xq)+\tilde
  \cN_o(xq)\right)
- x \left(  \cS_o(x)-\cS_e(xq)\right) \cBa_{\mbox{\tiny \rm odd}}
$$
where $\cBa_{\mbox{\tiny \rm odd}}$ is given by~\eqref{CBaodd} or~\eqref{CBaodd-new}.
\end{Proposition}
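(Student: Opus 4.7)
My plan is to derive all three formulas by combining the column-peeling constructions developed in the proofs of Theorems~\ref{thm:newinvolA} and~\ref{prop:newinvolBetsegment} with a parity extraction in the auxiliary variable $u$, which marks the size of the leftmost column.

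For $\cBa_{\rm odd}$, I would start from equation~\eqref{Bachapeauenu}, in which $u$ records the number of $t$-points, or equivalently the size of the first column. Since the empty heap (the unique heap contributing to $\cBa$ but not to $\hat{\cBa}^{(\rm a)}$) has an even number of first-column points, we have $\cBa_{\rm odd}=\frac{1}{2}\bigl(\hat{\cBa}^{(\rm a)}(1)-\hat{\cBa}^{(\rm a)}(-1)\bigr)$. The key observation is that $\cS(x,-1)=\cS(-x)$, since $\llp -u^2q^2\rrp_{k-1}$ is even in~$u$ and $u^{-k}|_{u=-1}=(-1)^k$. Substituting $u=\pm1$ in~\eqref{Bachapeauenu}, subtracting, and regrouping via $\cS(\pm x)=\cS_e(x)\pm\cS_o(x)$ then yields formula~\eqref{CBaodd-new} directly.

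For $\cL_{\rm odd}$, I would apply exactly the same extraction to equation~\eqref{altSegmentversion2enu}. The only new calculation is the evaluation of the first sum at $u=-1$: using $u^{1-k}|_{u=-1}=(-1)^{k-1}$ and $1-uq^k|_{u=-1}=1+q^k$, this sum equals $-\tilde\cN(-x)$, which is where the auxiliary series $\tilde\cN$ enters. Combining with the parity identities for $\cS$ as above then produces the stated closed form.

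For $\cL^{\rm odd}_{\rm odd}$, I would redo the column-peeling argument behind~\eqref{cL-eq}, now restricted to self-dual alternating heaps whose last column has odd size. Since peeling does not touch the last column, and since the reflection identification used in that proof matches the ``first column'' of the peeled type-$B$ heap with the ``last column'' of the original $L$-heap, every occurrence of $\cBa$ in the four case contributions must be replaced by $\cBa_{\rm odd}$; the single-vertex base contribution $xuq/(1-uq)$ must be restricted to odd multiplicities, becoming $xuq/(1-u^2q^2)$; and the correction $-x(\cBa-1-x\cBa)$ updates accordingly (the $-1$ disappears since $\cBa_{\rm odd}$ has no empty-heap contribution). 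Iterating the resulting recursion as in the derivation of~\eqref{altSegmentversion2enu} and then extracting the odd-in-$u$ part at $u=1$ as in the first two parts produces a $\cBa_{\rm odd}$-coefficient equal to $-x(\cS_o(x)-\cS_e(xq))$, while the iterated ``base'' piece, after evaluation at $u=\pm1$ and a re-indexing $K=k+1$, telescopes to $\frac{1}{2}(\cN_o(xq)+\tilde\cN_o(xq))$. The main obstacle is this third part: one must carefully verify the reflection identification so that the substitution $\cBa\to\cBa_{\rm odd}$ truly enforces the correct parity on the last column, redo the bookkeeping for the correction term, and perform the splitting of the iterated base sum (writing $1+uq^{k+1}-u^2q^{2k+2} = (1-u^2q^{2k+2})+uq^{k+1}(1-uq^{k+1})/(1-uq^{k+1})$) needed to isolate the $\cN_o$ and $\tilde\cN_o$ contributions appearing in the statement.
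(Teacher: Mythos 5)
Your proposal is correct and follows essentially the same route as the paper: extract the odd part in the column-marking variable $u$ (i.e.\ evaluate at $u=\pm1$ and take half the difference) from~\eqref{Bachapeauenu} and~\eqref{altSegmentversion2enu} for the first two formulas, and, for the third, set up and iterate the same modified peeling recursion $\mathcal{L}^{\mbox{\tiny \rm odd}}(u)=\frac{xuq}{1-u^2q^2}-\frac{x^2}{uq}(1-uq)\cBa_{\mbox{\tiny \rm odd}}+\frac{x}{uq}(1+u^2q^2)\mathcal{L}^{\mbox{\tiny \rm odd}}(uq)$ before extracting the odd part in $u$. The only cosmetic difference is your splitting identity in the last step, which is an unnecessarily roundabout way of saying that $\frac{1}{1-u^2q^{2k}}=\frac12\bigl(\frac{1}{1-uq^{k}}+\frac{1}{1+uq^{k}}\bigr)$ separates the $\cN_o$ and $\tilde\cN_o$ contributions.
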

\begin{proof}
Recall that the series $\hat{\mathcal{B}}^{\rm(a)}(u)$ given by~\eqref{Bachapeauenu} counts
 alternating heaps of type $B$, and records, with the variable $u$, the
number of points in the first column. Hence $\mathcal{B}^{\rm(a)}_{\mbox{\tiny \rm odd}}$
is simply the odd part (in $u$) of $\hat{\mathcal{B}}^{\rm(a)}(u)$, specialized at $u=1$:
$$
\mathcal{B}^{\rm(a)}_{\mbox{\tiny \rm odd}}=\frac{1}{2}\left(\hat{\mathcal{B}}^{\rm(a)}(1)-\hat{\mathcal{B}}^{\rm(a)}(-1)\right).
$$
Upon using~\eqref{Bachapeauenu}, and noticing that $x$ and $u$ have the same parity in $\cS(x,u)$, 
 we obtain the first result of the proposition.

For the second one, use in the same spirit~\eqref{altSegmentversion2enu} and 
$$\mathcal{L}_{\mbox{\tiny \rm odd}}=\frac{1}{2}\left(\mathcal{L}(1)-\mathcal{L}(-1)\right).$$

Finally, in order to compute  our last series $\mathcal{L}^{\mbox{\tiny \rm odd}}_{\mbox{\tiny \rm odd}}$,
 we will determine the
generating function $\mathcal{L}^{\mbox{\tiny \rm odd}}(u)$ of self-dual alternating
heaps over a path having an odd number of points in the \emm last,
column, where $u$ records the number of points in the first column.
In order to do this, we restrict to heaps with
an odd number of points in the last column the argument that led to~\eqref{cL-eq}. The first three cases
contribute $x\mathcal{B}_{\mbox{\tiny \rm odd}}^{\rm(a)}$, $xuq/(1-u^2q^2)$ and $x^2uq
\mathcal{B}_{\mbox{\tiny \rm odd}}^{\rm(a)}$ respectively.  The fourth one contributes
$$
x\left(\frac 1 {uq}+ uq \right)\left(\mathcal{L}^{\mbox{\tiny \rm odd}}(uq)
  -x\mathcal{B}_{\mbox{\tiny \rm odd}}^{\rm(a)}\right) -x (1-x)\mathcal{B}_{\mbox{\tiny \rm odd}}^{\rm(a)}.
$$
Putting together the four contributions gives:
  $$
\mathcal{L}^{\mbox{\tiny \rm odd}}(u)=
\frac{xuq}{1-u^2q^2}
-\frac{x^2}{uq}(1-uq)\mathcal{B}_{\mbox{\tiny \rm
    odd}}^{\rm(a)}+\frac{x}{uq}(1+u^2q^2)\mathcal{L}^{\mbox{\tiny \rm
    odd}}(uq).
$$
{Iterating this gives the ``odd'' analogue of~\eqref{altSegmentversion2enu}:}
$$
{\mathcal{L}^{\mbox{\tiny \rm odd}}(u)=
\sum_{k\ge 1} (xq)^ku^{2-k} q^{-{k\choose 2}} \frac{
  \llp -u^2q^2\rrp_{k-1}}{1-u^2q^{2k}}
-x\left(\cS(x,u)-u\cS(xq,u)\right) \mathcal{B}_{\mbox{\tiny \rm
    odd}}^{\rm(a)}.}
$$
The expression of $\mathcal{L}^{\mbox{\tiny \rm odd}}_{\mbox{\tiny \rm odd}}$ follows by writing
$$
\mathcal{L}^{\mbox{\tiny \rm odd}}_{\mbox{\tiny \rm
    odd}}=\frac{1}{2}\left(\mathcal{L}^{\mbox{\tiny \rm
      odd}}(1)-\mathcal{L}^{\mbox{\tiny \rm odd}}(-1)\right).
$$
\end{proof}

{\noindent{\bf Remark.} {Theorem}~\ref{thm:newinvolA} gives $\cA$ as  a
  series in $x$ whose coefficients are Laurent polynomials in $q$
  (while the expression of {Theorem}~\ref{thm:ABD-inv} defines a series in $x$
  whose coefficients are rational functions of $q$, with poles at
  roots of unity). In particular, we can specialize {Theorem}~\ref{thm:newinvolA}
  at $q=1$ to recover the number of  fc involutions in
   $\cA_n$, already derived by Stembridge~\cite{St3}. Similarly, the
  expression~\eqref{cBa-new} of $\cBa$ can be specialized at $q=1$. Combined
  with~\eqref{cBna-0} and~\eqref{Dinv-sol}, this allows to recover the
  number of fc involutions in $B_n$ and $D_n$. Finally, note that the
  expression of $\cBa_{\rm odd}$ in Proposition~\ref{prop:newinvolBetsegmentOdd} is also of the
  same type: a series in $q$ whose coefficients are Laurent
  polynomials in $q$. The specialization $q=1$ is again possible. For
  the series $\cL$ and its variants, we obtain the following
  periodicity results.
\begin{Corollary}\label{cor:L-odd}
  Write $\cL(x)=\sum_{n\ge 1} \cL_n x^n$, where $\cL_n\equiv \cL_n(q)$  counts
  self-dual alternating heaps over the $n$-point path. Define similarly the
  series $\cL_{{\rm odd},n}$ (resp. $\cL_{{\rm odd},n}^{\rm odd}$) for
  self-dual alternating heaps with an odd first (resp. first and last)
  column. Then
  $\cL_n$ (resp. $\cL_{{\rm odd},n}$)
is a rational  fraction in $q$
  of denominator $(1-q^n)$ (resp. $(1-q^{2n})$). Consequently, the sequence of its
  coefficients is ultimately periodic, with period dividing $n$
  (resp. $2n$). The series $\cL_{{\rm odd},n}^{\rm    odd}$ is a
  polynomial when $n$ is even, and otherwise a rational  fraction 
  of denominator $(1-q^{2n})$. In this case its coefficients form a
  periodic sequence, with period dividing $2n$.
\end{Corollary}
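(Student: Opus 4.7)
The plan is to closely mirror the argument of Corollary~\ref{cor:L}, applying it to the three negative-power expressions of Theorem~\ref{prop:newinvolBetsegment} and Proposition~\ref{prop:newinvolBetsegmentOdd} for $\cL$, $\cL_{\mbox{\tiny \rm odd}}$ and $\cL^{\mbox{\tiny \rm odd}}_{\mbox{\tiny \rm odd}}$. In all of these expressions the only sources of poles in $q$ are the series $\cN(x)$ and $\tilde\cN(x)$, whose coefficient of $x^n$ has denominator $q^{\binom{n}{2}}(1-q^n)$ and $q^{\binom{n}{2}}(1+q^n)$ respectively. All other ingredients --- $\cS(x)$, $\cS(xq)$ and their even/odd parts, as well as $\cA$, $\cBa$ and $\cBa_{\mbox{\tiny \rm odd}}$ --- have Laurent polynomial coefficients in $q$: indeed $\cA$, $\cBa$ and $\cBa_{\mbox{\tiny \rm odd}}$ are length \gfs\ in finite Coxeter groups and therefore have \emph{polynomial} coefficients, while for $\cS$ the Laurent polynomiality is immediate from~\eqref{cS-def}. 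Combined with the fact that $\cL_n$, $\cL_{\mbox{\tiny \rm odd}, n}$ and $\cL^{\mbox{\tiny \rm odd}}_{\mbox{\tiny \rm odd}, n}$ all lie in $\ns[[q]]$ (as length \gfs\ of combinatorial classes), this will rule out any pole at $q=0$ in the reduced fractions.

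The first two cases then proceed directly. For $\cL_n$, the formula $\cL=\cN(x)-x(\cS(x)-\cS(xq))\cBa$ of Theorem~\ref{prop:newinvolBetsegment} makes $\cL_n$ a rational function in $q$ of the form $q^{-e}P(q)/(1-q^n)$ for some $e\ge 0$ and some polynomial $P$, and non-negativity forces $q^e$ to divide~$P$, yielding denominator dividing $(1-q^n)$. For $\cL_{\mbox{\tiny \rm odd}, n}$, the expression $\cL_{\mbox{\tiny \rm odd}}=\tfrac12(\cN(x)+\tilde\cN(-x))-x(\cS_o(x)-\cS_e(xq))\cBa$ leads to a denominator dividing $q^e(1-q^n)(1+q^n)=q^e(1-q^{2n})$, which reduces to $(1-q^{2n})$ by the same non-negativity argument.

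The series $\cL^{\mbox{\tiny \rm odd}}_{\mbox{\tiny \rm odd}, n}$ requires one additional observation. Since $\cN_o(x)$ and $\tilde\cN_o(x)$ only contain odd powers of $x$, the coefficient of $x^n$ in $\cN_o(xq)$ (and in $\tilde\cN_o(xq)$) vanishes whenever $n$ is even. In that case $\cL^{\mbox{\tiny \rm odd}}_{\mbox{\tiny \rm odd}, n}$ is entirely determined by $-x(\cS_o(x)-\cS_e(xq))\cBa_{\mbox{\tiny \rm odd}}$; its coefficient of $x^n$ is a Laurent polynomial in $q$ which, by non-negativity, must actually be an ordinary polynomial. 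When $n$ is odd, both $\cN_o(xq)$ and $\tilde\cN_o(xq)$ contribute, producing a denominator dividing $(1-q^n)(1+q^n)=(1-q^{2n})$, and the argument of the previous paragraph goes through.

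Finally, the periodicity claims are standard: if $F(q)=P(q)/(1-q^m)$ with $\deg P=d$, the geometric expansion $1/(1-q^m)=\sum_{j\ge 0}q^{jm}$ shows that the coefficient of $q^\ell$ in $F(q)$ is determined by $\ell\bmod m$ as soon as $\ell\ge d$. No step here is really an obstacle, but the most delicate point I anticipate is the careful bookkeeping of Laurent monomials in $q$ and of the parity decompositions underlying the third case, to make sure that $(1-q^n)(1+q^n)$ is the correct combined denominator in both the $\cL_{\mbox{\tiny \rm odd}}$ and the odd-$n$ case of $\cL^{\mbox{\tiny \rm odd}}_{\mbox{\tiny \rm odd}}$.
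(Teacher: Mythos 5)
Your proposal is correct and follows essentially the same route as the paper: the paper's proof is a one-line appeal to the expressions of $\cL$, $\cL_{\rm odd}$ and $\cL_{\rm odd}^{\rm odd}$ in Theorem~\ref{prop:newinvolBetsegment} and Proposition~\ref{prop:newinvolBetsegmentOdd}, implicitly repeating the argument of Corollary~\ref{cor:L} (only $\cN$ and $\tilde\cN$ contribute denominators $1-q^n$ and $1+q^n$, the other factors have Laurent polynomial coefficients, and the combinatorial power-series interpretation excludes a pole at $q=0$), which is exactly what you spell out, including the correct parity observation that $\cN_o(xq)$ and $\tilde\cN_o(xq)$ do not contribute to even powers of $x$.
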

\begin{proof}
 This follows from  the expressions of $\cL(x)$, $\cL_{\rm odd}(x)$
 and $\cL_{\rm odd}^{\rm odd}(x)$ given in 
  \mbmChangenew{Theorem}~\ref{prop:newinvolBetsegment} and Proposition~\ref{prop:newinvolBetsegmentOdd}.
\end{proof}
}

\section{Generating functions  for infinite types}
\label{sec:infinite}
In this section, we complete our results for the infinite types. The
$\tA$-case has been solved in Section~\ref{sec:At}, and we now derive the  \gfs\ of fully commutative elements,
and  fully commutative involutions, in Coxeter groups of type
$\tB$, $\tC$ and $\tD$.

\subsection{All fully commutative elements}
Recall that all series for finite types have a common form
({Theorem}~\ref{thm:ABD}). This is also true for the infinite types
$\tB, \tC$ and $\tD$. In particular, all involve the \gf\
{$\AL(x)$} of
alternating heaps over a path, determined in the previous section
{(\mbmChangenew{Theorem}~\ref{thm:BL-neg})}.

\begin{Theorem}\label{thm:BCDt-alt}
  Let $\tB\equiv \tB(x,q)$, $\tC\equiv \tC(x,q)$, and $\tD\equiv
  \tD(x,q)$  be the \gfs\ of fully commutative elements of type $\tB$,
  $\tC$ and $\tD$,
  defined respectively by 
$$
\tB(x,q)=\sum_{n\ge 0} \tB_{n}^{FC} (q) x^n,\qquad 
{\tC(x,q)=\sum_{n\ge 0} \tC_{n-1}^{FC} (q) x^{n},}
\qquad
\tD(x,q)=\sum_{n\ge 0} \tD_{n+1}^{FC} (q) x^n.
$$
Let $\AL(x)$ be the \gf\ of alternating heaps over a path, expressed
in \mbmChangenew{Theorem}~\ref{thm:BL-neg}, and define the series $\JJ(x)$ and
$\KK(x)$ by~\eqref{JK-def} as before.
Then each of the series $\tB, \tC$ and $\tD$ can be written as
\beq\label{tilde-form}
R_0 L(x) + R_1 \frac{\KK(x)}{\JJ(x)} + R_2\frac{\JJ(xq)}{\JJ(x)}+R_3 +S,
\eeq
where  the series $R_i$ are
explicit rational series in $x$ and $q$ and $S$ is a simple  $q$-series. More precisely, the series
 $R_0$, $R_1$, $R_2$ and $S$ are given by the following table: 
$$
\begin{array}{l|ccc|}
& B & C & D\\\hline &&&\\
R_0 & 2  & 1  &  4 
\\
R_1 & \displaystyle -\frac{x+q(x-1)+xq^2(x-2)}{1-xq^2}  & 
 \displaystyle 2\ \frac{xq^2(1-x)}{1-xq^2}   
& \displaystyle  4\ \frac{q-x-xq+x^2q^2}{1-xq^2} \\
 R_2 &\displaystyle  \frac{xq^2(1-x)\left( q-x(1+q)+x^2q^2\right)}{(1-xq)(1-xq^2)^2}   & 
\displaystyle   \frac{x^2q^4(1-x)^2}{(1-xq)(1-xq^2)^2} &
\displaystyle   \frac{(q-x-xq+x^2q^2)^2}{(1-xq)(1-xq^2)^2} \\
S&\displaystyle \sum_{n\ge 3} \frac{x^n q^{2(2n-1)}}{1-q^{2n-1}} 
 &  0  &\displaystyle  2 \sum_{n\ge 3} \frac{x^n q^{3n}}{1-q^{n}} 
\end{array}
$$
and, with obvious notation,
\begin{align*}
R_3^{B}&
={\frac {{x}^{3}{q}^{4} \left(- 3\,{q}^{6}x-2\,x{q}^{5}+{q}^{4}x+4\,{q}^
{4}+2\,{q}^{3}-x{q}^{2}-{q}^{2}+q+1 \right) }{ \left( 1-q \right) \left(1- x{q}^{2}
 \right) ^{2} }}
,\\
 R_3^{C}&=
\frac{x^3 q^4(1+q)(1-3q+4q^2+2xq^3-3xq^4)}{(1-q)(1-xq^2)^2},\\
 R_3^{D}&= 
{\frac {{x}^{3}{q}^{5} \left(- 3\,{q}^{6}x-3\,{q}^{5}x+4\,{q}^{4}+3\,{q
}^{3}-2\,x{q}^{2}+q+2 \right) }{ \left( 1-q \right) \left(1- x{q}^{2} \right) ^{2}
 }}
.
\end{align*}
\end{Theorem}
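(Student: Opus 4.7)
My plan is to adapt the structural strategy of Section~\ref{sec:finite} to the affine types: decompose an fc element into an alternating ``core'' plus ``inflations'' attached at the special nodes of the Coxeter diagram, and then substitute the closed-form expressions available for the building blocks. The three functional summands in~\eqref{tilde-form} strongly suggest this blueprint, since they are precisely $L(x)$ (the alternating core, given by Theorem~\ref{thm:BL-neg}), $K(x)/J(x)= \Ba$ (alternating heaps of type $B$, see~\eqref{Ba-expr}), and $J(xq)/J(x) = (1-xq)A$ (fc elements of type $A$, Theorem~\ref{thm:ABD}).

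Concretely, following the analysis of~\cite{BJN-long}, fc elements in $\tC_n$ are either honest alternating heaps over an $(n{+}1)$-vertex path, or they feature a $<$-shaped inflation $s_j\cdots s_1 t s_1\cdots s_j$ at one of the two label-$4$ ends, exactly as in Section~\ref{sec:recB} for finite $B$. For $\tB_n$ and $\tD_n$, the inflations at a fork are the branched analogue already used in Section~\ref{sec:peel-D} for finite $D$, which produces a multiplicative factor of $2$ per fork; this is what explains the values $R_0 = 1$ for $\tC$ (no fork), $R_0 = 2$ for $\tB$ (one fork), and $R_0 = 4$ for $\tD$ (two forks). An efficient way to formalise this is via the systems of non-linear $q$-equations for $\tB, \tC, \tD$ given in~\cite{BJN-long}, which are the affine counterparts of Proposition~4.6 of~\cite{BJN-long} that we used in~\eqref{D-sol} for finite $D$. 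These equations express each $\tW$ as a rational combination of the auxiliary series $M$, $M^*$, $Q$, $Q^*$ and of segment-type generating functions, all of which now have explicit expressions via Propositions~\ref{prop:MQ} and~\ref{thm:O} and Theorem~\ref{thm:BL-neg}. After substitution, the $q$-equations become a rational identity in $x, q, J(x), J(xq), K(x)$, which can be collapsed to the claimed form using Proposition~\ref{prop:JHK}.

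The main obstacle I expect will be identifying and justifying the purely $q$-dependent correction $S$. This extra term does not come from the alternating-core/inflation picture: it accounts for families of ``wraparound'' fc elements whose support is a proper sub-diagram, but whose heap passes through the additional affine node in such a way that one generator may be repeated arbitrarily often without contradicting any commutation. A direct heap enumeration of these configurations, organised by the length of the wrapping tail, produces geometric-type series of the form $\sum_{n\ge 3} x^n q^{\alpha(n)}/(1-q^{\beta(n)})$, matching the values listed in the table (with $S=0$ for $\tC$, where the symmetry between the two label-$4$ ends leaves no room for such a family).

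Once $S$ is peeled off, what remains is a rational identity to be checked in the $3$-dimensional $\qs(x,q)$-space spanned by the $q$-shifts of $J$ (and of $K$). This last step is essentially bookkeeping: after substituting the explicit expressions for $L$, $\Ba$ and $A$ into the BJN-long $q$-equations, one extracts the coefficients of $L(x)$, $K/J$, $J(xq)/J(x)$ and identifies them with the $R_i$ of the table, type by type. As a robustness check, the first coefficients of $\tW^{FC}_n(q)$ can be computed independently with the \textsc{GBNP} package of \textsc{GAP} as explained after Theorem~\ref{thm:ABD}, and matched against the expansion of the claimed expressions.
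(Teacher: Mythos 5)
Your overall route --- start from the structural description of affine fc elements in \cite{BJN-long}, substitute the new closed forms for $L(x)$ (Theorem~\ref{thm:BL-neg}), $\Ba=\KK/\JJ$ and $A=\frac{1}{1-xq}\,\JJ(xq)/\JJ(x)$, then collect the coefficients of these three blocks --- is indeed the paper's strategy, which works type by type from \cite[Thm~3.4 and Sec.~3.2]{BJN-long}. But as written your plan has a genuine gap in precisely the place you flag as the main obstacle. The classification of fc elements of $\tC_n$ is not ``alternating heaps, plus a $<$-shaped inflation at one of the two label-$4$ ends'': it consists of \emph{five} disjoint families --- alternating heaps, left peaks, right peaks, \emph{left-right peaks} (inflations at both ends simultaneously, whose generating function involves $A$ and is the source of the $R_2$-type terms), and \emph{zigzags}, an infinite family for each $n$. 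You omit the last two, and this omission is what forces your incorrect account of $S$. The series $S$ has nothing to do with ``wraparound elements whose support is a proper sub-diagram'': it comes from the zigzags. In type $\tC$ their length generating function has denominator $1-q$ independently of $n$, so their total contribution is rational in $x$ and $q$ and is absorbed into $R_3^{C}$; thus $S^{C}=0$ not because ``the symmetry between the two label-$4$ ends leaves no room for such a family'' (the infinite families are very much present), but because no $n$-dependent denominator arises. In types $\tB$ and $\tD$, the images of the zigzags under the doubling transformations $\Delta_t$ and $\Delta_{t,u}$ of \cite{BJN-long} contain families whose length increases with a period depending on $n$, producing the terms $q^{2(2n-1)}/(1-q^{2n-1})$ and $2q^{3n}/(1-q^{n})$; these cannot be written as rational series in $x$ and $q$, and they are exactly $S^{B}$ and $S^{D}$.

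A second, related point: ``a multiplicative factor of $2$ per fork'' only explains the leading coefficients $R_0=2$ and $4$. The maps $\Delta_t$ and $\Delta_{t,u}$ multiply by $2$ (resp.\ $4$) only generically; the corrections for heaps containing no occurrence, or exactly one occurrence, of the end generator(s), together with the exceptional behaviour of one special peak, are what generate the $R_1$ and $R_2$ entries, and they must be tracked explicitly through $\Ba$ and $A$ (this is where the $\frac{xq^2}{1-xq^2}$-type prefactors in the table come from). Your fallback --- plugging the closed forms into the affine identities of \cite{BJN-long} and doing bookkeeping --- would in principle recover all of this, since the zigzag and peak contributions are recorded there (Props.~4.2--4.4 of \cite{BJN-long}); but then $S$ is already present in those identities and needs no new ``direct heap enumeration of wrapping tails''. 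As it stands, the combinatorial mechanism you propose for $S$ is not the right one, and the decomposition you describe would not reproduce the stated $R_1$, $R_2$, $R_3$ without restoring the missing families and the correction terms.
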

\noindent
{\bf Remarks} 
\\
1. The coefficients of $x^0, x^1$ and $x^2$ {in our three series
 are irrelevant, since the corresponding groups are
not well-defined. We have simply adjusted these coefficients  so as to make the rational part $R_3$ a multiple of
$x^3$.}

\smallskip

\noindent {2. Again, we can feed a computer algebra system
  with our expressions and expand then in $x$ to obtain the series
  $\tilde W_n^{FC}(q)$ for small values of $n$. The first few are reported
  in Table~\ref{tab:affine}.}

\begin{table}[htb]
  \centering
  \begin{tabular}{c|cc|}
 $n$ & 3 & 4 \\
\hline&& \\
$\tA_{n-1}^{FC}$    & $\frac{[1,2,3]}{1-q}$ & $\frac{[1, 4, 9, 12,
                                         8]}{1-q^2}$
\\
$\tB_{n}^{FC}$ & $\frac{(1+q)^2[1, 3, 7, 11, 14, 15, 12, 7, 2, 0, \bar 3, \bar 1,
            \bar 1]}{(1+q+q^2)(1-q^5)}$ & 
$\frac{(1+q)[1, 4, 10, 18, 24, 29, 29, 27, 19, 12, 3, \bar 3, \bar 9, \bar 8, \bar 7, \bar 3, \bar 1, \bar 1]}{1-q^7}$\\
$\tC_{n-1}^{FC}$ & $\frac{[1, 3, 5, 7, 6, 4, 2]}{1-q^3}$
&$\frac{[1, 3, 5, 7, 6, 4, 2, \bar 2, \bar 2, \bar 2]}{1-q}$
\\
$\tD_{n+1}^{FC}$ & $\frac{(1+q)[1,4,10,17,17,13,4,\bar 9,\bar 5,\bar 4]}{1-q^3}$ 
& $\frac{[1, 6, 20, 46, 78, 100, 100, 78, 35, \bar 14, \bar 36, \bar 44, \bar 36, \bar 16, \bar 8, \bar 4]}{1-q^4}$ 
  \end{tabular}
\vskip 3mm
  \caption{Length generating functions of fc elements in affine Coxeter
    groups. The list $[a_0, \ldots, a_k]$ stands for the polynomial
    $a_0+a_1q+\cdots + a_k q^k$, and the notation $\bar a$ means
    $-a$. The $\tA$-results come from {Theorem}~\ref{thm:O}.}
  \label{tab:affine}
\end{table}

{Before we prove {Theorem}~\ref{thm:BCDt-alt}, let us
  show that it implies the periodicity results
  of~\cite[Thm.~4.1]{BJN-long}. \mbmChangenew{Note that the
    \emph{exact} values of the periods  were found in~\cite{JN-periods}.}
  \begin{Corollary}
    The  coefficients of the series $\tB_n^{FC}(q)$
    (resp. $\tC_{n-1}^{FC}(q)$,     $\tD_{n+1}^{FC}(q)$) form a
    periodic sequence of period dividing $n(2n-1)$ (resp. $n$, $n$).
  \end{Corollary}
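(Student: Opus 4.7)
The plan is to read off the periodicity of the coefficient of $x^n$ in $\tB(x)$, $\tC(x)$, $\tD(x)$ directly from the decomposition~\eqref{tilde-form} of {Theorem}~\ref{thm:BCDt-alt}. For each summand I would show that its $x^n$-coefficient is either a polynomial in $q$ or a rational function of $q$ whose denominator divides an explicit $(1-q^d)$; the claim then follows by taking the least common multiple of the resulting periods.

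The first step is to observe that the ratios $\JJ(xq)/\JJ(x)$ and $\KK(x)/\JJ(x)$ have polynomial coefficients in $q$. Indeed, {Theorem}~\ref{thm:ABD} gives $A(x,q)=\JJ(xq)/((1-xq)\JJ(x))$, and since the groups $A_n$ are finite, $A(x,q)$ has polynomial coefficients in $q$, whence so does $(1-xq)A(x,q)=\JJ(xq)/\JJ(x)$. By~\eqref{Ba-expr}, $\KK(x)/\JJ(x)=\Ba$ counts a subset of fc elements of the finite group $B_n$, and is therefore polynomial in $q$ coefficientwise. The factors $R_1$ and $R_2$ appearing in {Theorem}~\ref{thm:BCDt-alt} are rational in $x$ with denominators only of the form $(1-xq^i)^k$, so their $x^n$-coefficients are polynomials in $q$. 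The combined contribution of $R_1\KK/\JJ+R_2\JJ(xq)/\JJ$ to the $x^n$-coefficient of $\tilde W(x)$ is thus a polynomial in $q$, contributing nothing to the period.

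Next I would deal with $R_3$ and $S$ separately. The only $q$-denominator in $R_3$ is $(1-q)$, so the $x^n$-coefficient of $R_3$ has the form $P_n(q)/(1-q)$, contributing a period dividing $1$. The series $S$ is explicit: in type $\tB$ its $x^n$-coefficient (for $n\ge 3$) equals $q^{2(2n-1)}/(1-q^{2n-1})$; in type $\tC$ it is zero; and in type $\tD$ it equals $2q^{3n}/(1-q^n)$, with respective periods dividing $2n-1$, $1$, and $n$. Finally, by {Corollary}~\ref{cor:L}, the $x^n$-coefficient of $\AL(x)$ is a rational function of $q$ with denominator dividing $(1-q^n)$, hence contributes a period dividing $n$.

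Summing these contributions, the $x^n$-coefficient of $\tilde W(x)$ is a polynomial in $q$ plus a rational function whose denominator divides $(1-q)(1-q^n)(1-q^{2n-1})$ in type $\tB$ and $(1-q)(1-q^n)$ in types $\tC$ and $\tD$. Taking the least common multiple of the periods yields $\mathrm{lcm}(n,2n-1)=n(2n-1)$ for $\tB_n^{FC}$ (using $\gcd(n,2n-1)=1$), and $n$ for $\tC_{n-1}^{FC}$ and $\tD_{n+1}^{FC}$. The only nuance is verifying that $R_1$ and $R_2$ carry no genuine $q$-denominators, which is immediate from the table, together with keeping careful track of the indexing shifts between $x^n$ and the corresponding group.
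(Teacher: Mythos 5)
Your proposal is correct and follows essentially the same route as the paper: it reads the periodicity off the decomposition of {Theorem}~\ref{thm:BCDt-alt}, using that $R_1$, $R_2$, $\KK(x)/\JJ(x)$ and $\JJ(xq)/\JJ(x)$ have polynomial coefficients in $q$, that $R_3$ contributes only a denominator $(1-q)$, and invoking Corollary~\ref{cor:L} for $\AL(x)$ together with the explicit series $S$, then taking the lcm of the periods. Just phrase the conclusion as ``each summand's $x^n$-coefficient is ultimately periodic with period dividing $1$, $n$ or $2n-1$, hence so is the sum'' rather than via a common denominator $(1-q)(1-q^n)(1-q^{2n-1})$, since the latter alone would not exclude multiple poles at $q=1$.
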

  \begin{proof}
    We start from the expressions of {Theorem}~\ref{thm:BCDt-alt}. The series
    $R_1$ and $R_2$, once expanded in $x$, have polynomial
    coefficients in $q$. The same holds for $K(x)/J(x)$ (by~\eqref{Ba-expr})
    and $J(xq)/J(x)$ (by~\eqref{A-expr}). The coefficient of $x^n$ in $R_3$
    is a fraction with denominator $(1-q)$. Then the results follow
    from the properties of $L_n(q)$ (Corollary~\ref{cor:L}) and the values
    of the series $S$.
  \end{proof}
}

\begin{proof}[Proof of {Theorem}~\ref{thm:BCDt-alt}]
 We begin with the simplest case, which is $\tC_n$. Recall that the
Coxeter graph of $\tC_n$ is the $(n+1)$-point path (Figure~\ref{fig:dynkin}), which explains why the \gf\ we consider is $\sum_{n}
  \tC_n^{FC} x^{n+1}= {\sum_{n}
  \tC_{n-1}^{FC} x^{n}}$. 

   For $n\ge 2$, {Theorem~3.4}
in~\cite{BJN-long} describes the set of heaps
  encoding fc elements of $\tC_n$ as the disjoint union of  five sets, which we
  will enumerate separately. 
  \begin{itemize}
  \item First come alternating heaps over the $(n+1)$-point path. The
    associated \gf\ is 
$$
L(x) -x L_1 -x^2L_2,
$$
where $L_i$ counts alternating heaps over the $i$-point
path. {Since the coefficients of $x$ and $x^2$ in our
  final series are irrelevant, we do not need the expressions of $L_1$
  or $L_2$.}
\item Then come \emm zigzags,. Their \gf , derived
  in the proof of~\cite[Prop.~4.2]{BJN-long}, is
$$
\sum_{n\ge 2} x^{n+1}\left( \frac{2n q^{2n+2}}{1-q} + (2n-2) q^{2n+1}
\right)=
2\,{\frac {{x}^{3}{q}^{5} \left(1+q -x{q}^{3} \right) }{ \left(1- q
 \right)  \left(1- x{q}^{2} \right) ^{2}}}
.
$$

\item Then come \emm left peaks,. They are obtained as follows:
  {starting from an alternating heap over the $(n-j+1)$-point path
  (with $1\le j \le n-1$, and vertices indexed by $s_j, \ldots,
  s_{n-1},u$),  having  exactly one point in
  the $s_j$-column, one inflates this point into a 
$<$-shaped heap}
  $s_j s_{j-1} \cdots s_1 t s_1 \cdots s_{j-1} s_j$ (this is closely related to the description of
  non-alternating fc elements of type $B$ in Section~\ref{sec:recB}). The
  corresponding \gf\ is
\beq\label{LP-B}
LP(x)=\frac{xq^2}{1-xq^2} \left( \Ba(x) -1-xq-x \Ba(x)\right),
\eeq
where the term between parentheses counts alternating heaps with at
least two columns and exactly one point in the first column.
\item The \gf\ $RP(x)$ of \emm right peaks, coincides with $LP(x)$.
\item The fifth class consists of \emm left-right peaks,. To  obtain
  them, one starts from  an alternating heap $H$ over a path with
  vertices  $s_j,
  \ldots,s_k$, having one point in its first and last columns
  (with $1\le j<k\le n-1$), {inflates the point in the
    $s_j$-column into a $<$-shaped heap
  $s_j s_{j-1} \cdots s_1 t s_1 \cdots s_{j-1} s_j$, and symmetrically
  inflates the point in the $s_k$-column  into a $>$-shaped heap $s_k s_{k+1} \cdots s_{n-1} u  s_{n-1}\cdots s_{k+1}s_k
  $}. Accordingly, the \gf\ is
\beq\label{LRP-A}
{LRP(x)}= \left(\frac{xq^2}{1-xq^2}\right)^2 \left(A(x)-1-2xA(x)+x^2A(x)+x-xq\right),
\eeq
where the term between parentheses counts alternating heaps of type
$A$ with at least two columns and one point in the first and
last columns. 
  \end{itemize}
 {We now add all contributions, and inject the expressions~\eqref{A-expr}
 and~\eqref{Ba-expr} of $A$ and $\Ba$. This gives the
form~\eqref{tilde-form} for the series
$$\sum_{n\ge 2} \tC_n^{FC} x^{n+1} = \sum_{n\ge 3} \tC_{n-1}x^n,$$
 for the values of $S, R_0,
R_1, R_2$  listed above, but with a different value of $R_3$. Removing
from $R_3$ its terms in $x^1$ and $x^2$ gives the announced value of $R_3^C$.}

\medskip
We now move to the $\tB$-case. For $n\ge 2$, Section 3.2
in~\cite{BJN-long} describes the heaps encoding fc elements of
$\tB_{n+1}$ in terms of   a transformation $\Delta_t$ acting on fc heaps
of type $\tC_n$ 
(which we have described above).  This transformation maps an fc heap
$H$ of
type $\tC_n$ on a \emm set, of heaps of type $\tB_{n+1}$, obtained by
replacing each occurrence of the generator $t$ in $H$ by one or
several copies of $t_1$ and/or $t_2$ (with the notation of Figure~\ref{fig:dynkin}). The union of
these sets is disjoint, and consists of all fc heaps of type
$\tB_{n+1}$. Therefore we now count heaps of $\Delta_t(X)$, for each of
the five families~$X$ of fc heaps of type $\tC_n$ listed above.
\begin{itemize}
\item Given an alternating heap $H$ of $\tC_n$, the length \gf\ of the set
  $\Delta_{t}(H)$ is $q^{|H|}\alpha_t(H) $, where
 \beq\label{alpha}
    \alpha_t(H)=2+
  \begin{cases}
0 & \hbox{if } H \hbox{ contains at least two occurrences of } t,
\\
-1 &\hbox{if } H \hbox{ contains no occurrence of } t,
\\
q &\hbox{if } H \hbox{ contains exactly one occurrence of } t.
 \end{cases}
\eeq
 In the second case, erasing the first (empty) column of $H$ leaves
    an alternating heap of type~$B$ on $n\ge 2$ points, reflected in a
    vertical line.
In the third case, $H$
  is  an alternating heap of type $B$  on $n+1\ge 3$ points with a non-empty
  last column, reflected in a vertical line.
Hence, the \gf\ of fc heaps of type $\tB_{n+1}$ arising from
alternating heaps of type $\tC_n$ is
\begin{multline}\label{Dt-alt}
  2\left( L(x)-xL_1-x^2L_2\right) -x \left( \Ba(x) -1 -x \Ba_1\right)
  \\
+q
\left( \Ba(x)-1-x\Ba_1 -x^2\Ba_2 -x\Ba(x)+x +{x^2} \Ba_1\right),
\end{multline}
where $\Ba_i$ counts alternating heaps of type $B$ over the $i$-point
path.
{Again, we do not need the expressions of $L_1$, $L_2$, $\Ba_1$ or $\Ba_2$.}
\item The \gf\ of heaps arising from zigzags is given
  in~\cite[Prop.~4.3]{BJN-long}. It reads
  \begin{multline*}
    \sum_{n\ge 2} x^{n+1} \left( \frac{(2n+3)q^{2n+4}}{1-q} +
  \frac{q^{2(2n+1)}}{1-q^{2n+1}} +(2n+2) q^{2n+3} +(2n-2) q^{2n+2}\right)
\\=
\sum_{n\ge 2} x^{n+1} \frac{q^{2(2n+1)}}{1-q^{2n+1}} 
+{\frac {{x}^{3}{q}^{6} \left(- {q}^{4}x-4\,{q}^{3}x+{q}^{2}+4\,q+2
 \right) }{ \left(1- q \right)  \left(1- x{q}^{2} \right) ^{2}}}
.
  \end{multline*}
\item Each left peak $H$ of $\tC_n$ gives rise to one element of
  $\tB_{n+1}$, of size $|H|+1$. The associated \gf\ is thus
$
q LP(x)$, with $LP(x)$ given by~\eqref{LP-B}.
\item Right peaks behave under the map $\Delta_t$ as alternating
  elements do, with one special case: the right peak $H$
  corresponding to the element $ts_1 \cdots
  s_{n-1}u s_{n-1}\cdots s_1 t$ does not give rise to two
  {heaps of size $|H|$}, but to nine heaps of
  $\tB_{n+1}$: four of size $|H|$, four of size $|H|+1$ and one of
  size $|H|+2$. By adapting the argument that led us
  to~\eqref{Dt-alt}, we obtain the \gf\ of heaps arising from right peaks as
  \begin{multline}\label{RpDt}
  RP_{\Delta_t}=  2 RP(x)- \frac{x^2q^2}{1-xq^2} \left(A(x)-1-xA(x)\right)+\left(2 + 4q+q^2\right)\sum_{n\ge 2} x^{n+1} q^{2n+1}
\\+\frac{xq^3}{1-xq^2}  \left( A(x)-1-x(1+q)  -2x(A(x)-1)+x^2A(x)\right),
  \end{multline}
where $RP(x)=LP(x)$ is given by~\eqref{LP-B}. 
{In the above expression, the second term counts right peaks
  with an empty first column, and the fourth one is $q$ times the \gf\
  of right peaks having exactly one point in the first column.}
\item Each left-right peak $H$ of $\tC_n$ gives rise to one element of
  $\tB_{n+1}$, of size $|H|+1$. The associated \gf\ is thus
$
q LRP(x)$, with $LRP(x)$ given by~\eqref{LRP-A}.
\end{itemize}
Adding all contributions, {and injecting the expressions~\eqref{A-expr}
 and~\eqref{Ba-expr} of $A$ and $\Ba$,}  gives the form~\eqref{tilde-form} for
the series
$$
\sum_{n\ge 2} \tB_{n+1}^{FC}(q)x^{n+1}= \sum_{n\ge 3}
\tB_{n}^{FC}(q)x^{n} , 
$$
for the values of  $S, R_0, R_1$ and $ R_2$ listed above. {The value of
$R_3$ that we obtain only differs from the one given in the
{theorem} by the coefficients of $x^1$ and $x^2$.}

\medskip
Let us finally address  the $\tD$-case. Again, Section 3.2
in~\cite{BJN-long} describes the heaps encoding fc elements of
$\tD_{n+2}$ (for $n\ge 2$) in terms of   a transformation $\Delta_{t,u}$ acting on fc heaps
of type $\tC_n$. The transformation $\Delta_t$ {that we used to
generate elements of $\tB_{n+1}$} was acting on the first
column of a heap. {We can roughly describe the new
  transformation  $\Delta_{t,u}$ by saying that it also modifies
the last column, in a symmetric fashion. We refer
to~\cite{BJN-long} for details.}   
We will now count heaps of $\Delta_{t,u}(X)$, for each of
the five families $X$ of fc heaps of type $\tC_n$ listed above.
\begin{itemize}
\item Given an alternating heap $H$ of $\tC_n$, the length \gf\ of the set
  $\Delta_{t,u}(H)$ is $q^{|H|}\alpha_t(H) \alpha_u(H)$, where 
$
    \alpha_t(H)$ is defined by~\eqref{alpha}, and $\alpha_u(H)$ is
    defined similarly in terms of the generator $u$. A careful case by
    case study gives the \gf\ of fc elements of type {$\tD_{n+2}$}
    arising from alternating heaps of $\tC_n$ as
    \begin{multline*}
      4 L(x) -3 x^2A(x)+ 2 (q-2) x \big(A(x)-xA(x)\big) -4 x \big(\Ba(x)-A(x)\big)\\+
(4q+q^2)\big(A(x)-2{x}A(x)+x^2A(x)\big) +4q \big(\Ba(x)-x\Ba(x)-A(x)+xA(x)\big) +
\Pol_2(x),
 \end{multline*}
where $\Pol_2(x)$ is a polynomial in $x$ of degree at most 2 (recall
that the first three coefficients of our final series $\tD(x,q)$ are
irrelevant). The first term corresponds to the generic case where a
heap gives rise to 4 heaps of the same size. The
second (resp. third, fourth, fifth, sixth) term corrects this contribution for
heaps $H$ such that $\{|H|_t,|H|_u\}=\{0\}$ (resp. $\{0,1\}$,
$\{0,{i}\}$,   $\{1\}$, $\{1,{i}\}$,
{with $i\ge 2$}).
\item The \gf\ of heaps arising from zigzags is given
  in~\cite[Prop.~4.4]{BJN-long}. It reads
  \begin{multline*}
    \sum_{n\ge 2} x^{n+1} \left( \frac{(2n+6)q^{2n+5}}{1-q} +
  \frac{2q^{3(n+1)}}{1-q^{n+1}} +(2n+4) q^{2n+4} +(2n-2) q^{2n+3}\right)
\\=
2\sum_{n\ge 3} x^{n} \frac{q^{3n}}{1-q^n} 
+ 2\,{\frac {{x}^{3}{q}^{7} \left(- {q}^{4}x-3\,{q}^{3}x+{q}^{2}+3\,q+1
 \right) }{ \left(1- x{q}^{2} \right) ^{2} \left( 1-q \right) }}
.
  \end{multline*}
\item Applying $\Delta_{t,u}$ to a right peak boils down to applying
  $\Delta_t$, and then replacing the only occurrence of $u$ by $u_1
  u_2$. Hence the \gf\ of heaps of $\tD_{n+2}$ arising from a right
  peak of $\tC_n$ is $q RP_{\Delta_t}$, where  $ RP_{\Delta_t}$ is
  given by~\eqref{RpDt}.
\item The case of left peaks is symmetric, and gives another term $q
  RP_{\Delta_t}$.
\item Finally, each left-right peak $H$ of $\tC_n$ gives rise to one
  element of {$\tD_{n+2}$}, of size $|H|+2$. The associated \gf\ is thus
  $q^2LRP(x)$, with $LRP(x)$ given by~\eqref{LRP-A}.
\end{itemize}
Adding all contributions gives an expression of the form~\eqref{tilde-form} for
the series
$$
\sum_{n\ge 2} \tD_{n+2}^{FC}(q)x^{n+1}= \sum_{n\ge 3} \tD_{n+1}^{FC}(q)x^{n},
$$
for the values of $S, R_1, R_2$ listed in the {theorem}. The series
$R_3$ only differs from $R_3^D$ by a
polynomial in $x$ of degree 2.
\end{proof}

\subsection{Fully commutative involutions}

We now establish the corresponding results for fc involutions.
Again, the three series that we obtain have a similar form. 
In particular, each of them involves the  generating function
$\cL(x)$ of
self-dual alternating heaps over a path, determined in the previous
section (\mbmChangenew{Theorem}~\ref{prop:newinvolBetsegment}), or one of its variants obtained by
imposing parity conditions on the first and/or last column
(Proposition~\ref{prop:newinvolBetsegmentOdd}). Recall that the
$\tA$-case was solved in Section~\ref{sec:At}.

\begin{Theorem}\label{thm:invol-BCDt-alt}
  Let $\tilde{\cB}\equiv \tilde{\cB}(x,q)$, $\tilde{\cC}\equiv \tilde{\cC}(x,q)$, and $\tilde{\cD}\equiv
  \tilde{\cD}(x,q)$  be the \gfs\ of fully commutative involutions of type $\tB$,
  $\tC$ and $\tD$,
  defined respectively by 
$$
\tilde{\cB}(x,q)=\sum_{n\ge 0} \tilde{\cB}_{n}^{FC} (q) x^n,\qquad 
\tilde{\cC}(x,q)=\sum_{n\ge 0} \tilde{\cC}_{n-1}^{FC} (q) x^{n},
\qquad
\tilde{\cD}(x,q)=\sum_{n\ge 0} \tilde{\cD}_{n+1}^{FC} (q) x^n.
$$
 Define the series $\JI(x)$ and 
$\KI(x)$ by~\eqref{DD-def} and~\eqref{UV-def}  as before, and let
$\KI_e(x)$ denote the even part of $\KI(x)$ in $x$ (see~\eqref{eop}).
Then each of the series $\tilde{\cB}, \tilde{\cC}$ and $\tilde{\cD}$
can be written as 
\beq\label{inv-tilde-form}
\cR_0 \cL_0(x) +\cR\frac{\KI_e(xq)}{\JI(x)}+ \cR_1 \frac{\KI(x)}{\JI(x)} + \cR_2\frac{\JI(-xq)}{\JI(x)}+\cR_3 +\cS,
\eeq
where $\cL_0(x)$ is the \gf\ of
self-dual alternating heaps
(\mbmChangenew{Theorem}~\ref{prop:newinvolBetsegment}) or one of its variants
(Proposition~\ref{prop:newinvolBetsegmentOdd}),  $\cR$ and the $\cR_i$ are
explicit rational series in $x$ and $q$, and  $\cS$ is a simple $q$-series. The series
 $\cR_0$, $\cL_0$,  $\cR$, $\cR_1$, $\cR_2$  and $\cS$ are given by the following table: 
$$\begin{array}{l|ccc|}
& \cB & \cC & \cD \\
\hline&&& \\
\cR_0& 2  & 1  & 4 \\
    \cL_0 &\cL_{\rm odd} &  \cL &
\cL_{\rm odd}^{\rm odd} \\
\cR &\displaystyle 2x^2q^3\ \frac{1-x}{1-xq^2}  &  0  &
       \displaystyle  4xq\ \frac{x+q(1-x)-x^2q^2}{1-xq^2}  \\
\cR_1 & \displaystyle \frac{x+q(1-x)-x^2q^2}{1-xq^2} &
\displaystyle{2\ \frac{xq^2(1-x)}{1-xq^2}   }
& 0
\\
 \cR_2& \displaystyle \frac{xq^2(1-x)\left( x+q(1-x)-x^2q^2\right)}{(1-xq^2)^2} &
\displaystyle  \frac{x^2q^4(1-x)^2}{(1-xq^2)^2}&\displaystyle 
\frac{(x+q(1-x)-x^2q^2)^2}
{(1-xq^2)^2}\\
\cS &\displaystyle \sum_{n\geq3}\frac{x^nq^{2(2n-1)}}{1-q^{2n-1}} 
 &  0  &\displaystyle  2\sum_{n\geq3}\frac{x^nq^{4n}}{1-q^{2n}}
\end{array}$$
and, with obvious notation,
\begin{align*}
\cR_3^{B}&
=x^3q^4\,{\frac {x(q^2-2q^3+3q^4-3q^6)-1+3q-5q^2+4q^4}{ \left( 1-q \right) \left(1- x{q}^{2}
 \right) ^{2} }}
,\\
 \cR_3^{C}&={\frac{x^3 q^4(
1-3q-5q^2+(2x+5)q^3+(3x+4)q^4-4xq^5-3xq^6)}{(1-q^2)(1-xq^2)^2},}
\\
 \cR_3^{D}&= x^3q^5\,
{\frac {xq^2(2-2q+3q^3-2q^4-3q^5)-2+3q-q^2-5q^3+3q^4+4q^5 }{ \left( 1-q^2 \right) \left(1- x{q}^{2} \right) ^{2}
 }}
.
\end{align*}
\end{Theorem}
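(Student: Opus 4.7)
The plan is to mirror the proof of Theorem~\ref{thm:BCDt-alt} step by step, restricting every construction to self-dual heaps (which by Definition~\ref{defheap} are exactly the fc involutions). The key inputs will be the five-family decomposition of fc heaps of $\tC_n$ from Theorem~3.4 of~\cite{BJN-long}, the transformations $\Delta_t$ and $\Delta_{t,u}$ used to pass from $\tC_n$ to $\tB_{n+1}$ and $\tD_{n+2}$, and the results of Theorem~\ref{prop:newinvolBetsegment} and Proposition~\ref{prop:newinvolBetsegmentOdd} for the various self-dual path \gfs.

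First I would treat $\tilde{\cC}$. The five-family decomposition of fc heaps of $\tC_n$ restricts to a disjoint decomposition of self-dual heaps, since each of the five geometric types (alternating, zigzags, left peaks, right peaks, left-right peaks) is stable under vertical reflection. Self-dual alternating heaps contribute $\cL(x)$ up to polynomial corrections for small $n$. Self-dual zigzags form an explicit two-sided symmetric subfamily whose length \gf\ is a simple $q$-series and produces the term $\cS$. Self-dual left peaks are obtained by inflating the unique point of the first column of a self-dual alternating heap with exactly one point there; self-dual right peaks give the symmetric contribution (and by self-duality the two classes match up). Self-dual left-right peaks come from doubly inflating a self-dual alternating heap of type $A$ with one point in each end column. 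Adding up and injecting \eqref{cA-expr} and \eqref{cBalt} for $\cA$ and $\cBa$ (or equivalently \eqref{cBa-new}) would give the claimed form~\eqref{inv-tilde-form} with $\cL_0=\cL$, the listed $\cR_i$, and $\cR_3^C$.

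Next I would handle $\tilde{\cB}$ and $\tilde{\cD}$ by analyzing, family by family, which heaps in $\Delta_t(H)$ (respectively $\Delta_{t,u}(H)$) are self-dual. A heap in $\Delta_t(H)$ is self-dual iff $H$ is self-dual and the sequence of $t_1,t_2$-labels in its first column is a palindrome; in particular, the palindromic constraint forces the image to have an odd number of points in its first column, which is why $\cL_{\rm odd}$ replaces $\cL$ in the alternating contribution. A careful case analysis on the multiplicity of the $t$-column of $H$ gives the self-dual analog of the factor $\alpha_t(H)$ from~\eqref{alpha}; the ``exactly one occurrence'' subcase, weighted by $q$, explains the presence of the term $\cR\,\KI_e(xq)/\JI(x)$, since by~\eqref{CBaodd} this is exactly where $\cBa_{\rm odd}$ enters. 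The remaining four families (zigzags, left peaks, right peaks, left-right peaks) of $\tC_n$ are treated analogously, reusing the intermediate generating functions from the proof of Theorem~\ref{thm:BCDt-alt} but with their self-dual versions. For $\tilde{\cD}$, the transformation $\Delta_{t,u}$ acts symmetrically on the two ends, so self-duality forces palindromic constraints on both the first and last columns, which is the origin of $\cL_{\rm odd}^{\rm odd}$ and of the doubled coefficient $\cR_0=4$.

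The main obstacle will be the bookkeeping of the self-duality constraints under $\Delta_t$ and $\Delta_{t,u}$: one has to determine, for each of the five families of $\tC_n$ and each combinatorial profile of the first (and, for $\tD$, the last) column, the exact number of palindromic replacements and their length contribution, then convert each subcase into an expression in $\cA$, $\cBa$, $\cBa_{\rm odd}$, and $\cL_0$. Once all contributions are collected, checking that the total matches~\eqref{inv-tilde-form} with the tabulated $\cR_0,\cR,\cR_1,\cR_2,\cS$ and the rational remainder $\cR_3^B$, $\cR_3^C$ or $\cR_3^D$ is a long but essentially mechanical simplification, of the same nature as in the proof of Theorem~\ref{thm:BCDt-alt}.
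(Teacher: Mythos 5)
Your proposal follows essentially the same route as the paper: its proof likewise restricts the five-family decomposition of type $\tC$ to self-dual heaps and then determines which images under $\Delta_t$ (resp.\ $\Delta_{t,u}$) are self-dual, which forces odd first (resp.\ first and last) columns and brings in $\cL_{\rm odd}$, $\cL_{\rm odd}^{\rm odd}$ and $\cBa_{\rm odd}$ exactly as you predict, before the same mechanical collection of contributions. The only detail your sketch glosses over is the exceptional special right (and, in type $\tD$, left) peak, where the $t$-labels need not alternate so a positive even $t$-multiplicity still produces self-dual images; your palindrome criterion does cover this case, but it must be singled out as in the paper's exceptions (i)--(ii).
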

\noindent
{\bf Remark.} As before, the coefficients of $x^0, x^1$ and $x^2$ in our series
 are irrelevant, {and we have adjusted them so that the
   fractions $\cR_3$ are multiples of $x^3$. Expanding our series in~$x$ gives the first few
 values of the series $\tilde {\mathcal W}_n^{FC}(q)$, reported in Table~\ref{tab:affine-inv}.}

\begin{table}[htb]
  \centering
  \begin{tabular}{c|cc|}
 $n$ & 3 & 4 \\
\hline&& \\
$\ctA_{n-1}^{FC}$    & [1,3]& $\frac{[1, 4, 2, 0, 3, \bar 4]}{1- q^4}$ 
\\
$\ctB_{n}^{FC}$ &$ \frac{[1, 5, 8, 11, 14, 16, 16, 11, 8, 3, 1, \bar 5, \bar 3, \bar 4,
             \bar 2, \bar 2, \bar 1, \bar 1]}{(1-q^5)(1-q^6)/(1-q)}$
& $ \frac{[1,6,11,10,12,13,13,12,6,\bar 1,\bar 2,\bar 5,\bar 6,\bar 7,\bar 6,\bar 5,\bar 4,\bar 2,\bar 1,\bar 1]}{(1+q)(1-q^7)}$\\
$\ctC_{n-1}^{FC}$ &$\frac{[1, 4, 4, 4, 1, 2, 2]}{(1+q)(1-q^3)}$&
 $\frac{[1, 4, 2, 0, 3, \bar 2, 0, 4, 0, \bar 4, \bar 2]}{1-q^2}$
 \\
$\ctD_{n+1}^{FC}$& $\frac{[1, 5, 6, 4, 7, 4, 12, \bar 1, 0, \bar 4, \bar 1, \bar 4, \bar 1,
              \bar 4]}{1-q^6}$
&$\frac{[1, 6, 10, 6, 7, 6, 8, 4, 5, \bar 2, \bar 8, \bar 6, \bar 5, \bar 6, \bar 6, \bar 4, \bar 2, \bar 4]}{1-q^8}$
  \end{tabular}
\vskip 3mm
  \caption{Length generating functions of fc involutions in affine Coxeter
    groups. The list $[a_0, \ldots, a_k]$ stands for the polynomial
    $a_0+a_1q+\cdots + a_k q^k$, and the notation $\bar a$ means
    $-a$. The $\tA$-results come from {Theorem}~\ref{thm:At-inv}. Recall that $\ctA_{2m}(q)$ is a polynomial.}
  \label{tab:affine-inv}
\end{table}

{Before we prove {Theorem}~\ref{thm:invol-BCDt-alt}, let us
  show that it implies the periodicity results
  of~\cite[Prop.~3.4]{BJN-inv}. \mbmChangenew{Again, we refer
    to~\cite{JN-periods} for the \emph{exact} values of the periods.}
  \begin{Corollary}
    The  coefficients of the series $\ctB_n^{FC}(q)$
    (resp. $\ctC_{n-1}^{FC}(q)$,     $\ctD_{n+1}^{FC}(q)$) form a
    periodic sequence of period dividing $2n(2n-1)$ (resp. $2n$,
    $2n$). Moreover, if $n$ is even, the period of
    $\ctC_{n-1}^{FC}(q)$ divides $n$.
  \end{Corollary}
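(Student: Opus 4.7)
The plan is to mimic the proof of the corresponding corollary for fc elements given right after {Theorem}~\ref{thm:BCDt-alt}, using now the closed-form expression~\eqref{inv-tilde-form} from {Theorem}~\ref{thm:invol-BCDt-alt}. First I would extract the coefficient of $x^n$ in each of the five summands of~\eqref{inv-tilde-form} and track its denominator as a rational function of~$q$: the period of an ultimately periodic coefficient sequence divides the denominator's list of pole-orders at roots of unity, so the period of the total $x^n$-coefficient divides the least common multiple of these contributions.

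Second, I would verify that the three ``ratio'' terms $\cR\,\KI_e(xq)/\JI(x)$, $\cR_1\,\KI(x)/\JI(x)$ and $\cR_2\,\JI(-xq)/\JI(x)$ contribute only polynomials in~$q$. Indeed, by~\eqref{cA-expr},~\eqref{cBalt} and~\eqref{CBaodd}, these three quotients are, up to a monomial in $x$ and $q$, the series $\cBa_{\rm odd}$, $\cBa$ and $\cA$ enumerating finite-type fc involutions or self-dual alternating heaps of type $B$, all of which are polynomial in $q$ for combinatorial reasons (finitely many fc involutions of a given length in a finite group). The prefactors $\cR$ and the $\cR_i$ are rational in $x,q$ with a single $x$-pole at $x=q^{-2}$, so their $x^n$-coefficients are polynomial in $q$; thus these three terms bring no period contribution. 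A direct inspection of $\cR_3^{B}$, $\cR_3^{C}$ and $\cR_3^{D}$ shows that their denominators are $(1-q)(1-xq^2)^2$ or $(1-q^2)(1-xq^2)^2$, so the $x^n$-coefficient of $\cR_3$ has denominator dividing $(1-q^2)$ and contributes a period divisor of $2$.

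Third, by Corollary~\ref{cor:L-odd}, the coefficient of $x^n$ in $\cL_0(x)$ has denominator $(1-q^n)$ for $\cL$ (the $\tC$-case), denominator $(1-q^{2n})$ for $\cL_{\rm odd}$ (the $\tB$-case), and either is polynomial or has denominator $(1-q^{2n})$ for $\cL^{\rm odd}_{\rm odd}$ (the $\tD$-case, polynomial exactly when $n$ is even). The $\cS$-summand contributes denominator $(1-q^{2n-1})$ in type $\tB$, is zero in type $\tC$, and has denominator $(1-q^{2n})$ in type $\tD$. Assembling all contributions via lcm yields periods dividing $\operatorname{lcm}(2n,2,2n-1)=2n(2n-1)$ for $\tB$, $\operatorname{lcm}(n,2)$ for $\tC$, and $\operatorname{lcm}(2n,2,2n)=2n$ for $\tD$. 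In the $\tC$-case, $\operatorname{lcm}(n,2)=n$ when $n$ is even and $2n$ otherwise, which is exactly the refined claim.

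The only step that requires real care is the identification in the second paragraph: one must be sure that no hidden cyclotomic factor pops out of the three ratio terms. Once these are confirmed to have polynomial $x^n$-coefficients (via the combinatorial identifications with $\cA$, $\cBa$ and $\cBa_{\rm odd}$), the rest is routine denominator bookkeeping, fully parallel to the non-involutive corollary.
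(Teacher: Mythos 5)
Your proposal is correct and follows essentially the same route as the paper's own proof: both start from the decomposition of Theorem~\ref{thm:invol-BCDt-alt}, observe that the three ratio terms and the prefactors have polynomial $x^n$-coefficients in $q$ (via the identifications~\eqref{cA-expr},~\eqref{cBalt},~\eqref{CBaodd}), bound the denominator of the $x^n$-coefficient of $\cR_3$ by $(1-q)$ or $(1-q^2)$, and conclude by combining Corollary~\ref{cor:L-odd} with the explicit series $\cS$. The only difference is that you spell out the final lcm bookkeeping, which the paper leaves implicit.
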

  \begin{proof}
    We start from the expressions of {Theorem}~\ref{thm:invol-BCDt-alt}. The series
    $\cR$, $\cR_1$ and $\cR_2$, once expanded in $x$, have polynomial
    coefficients in $q$. The same holds for $\KI_e(xq)/\JI(x)$
    (by~\eqref{CBaodd}),  $\KI(x)/\JI(x)$ (by~\eqref{cBalt})
    and $\JI(-xq)/\JI(x)$ (by~\eqref{cA-expr}). The coefficient of $x^n$ in $\cR_3$
    is a fraction with denominator $(1-q)$ (in the $B$-case) or
    $(1-q^2)$ (in the $C$- and $D$-cases). Then the results follow
    from the properties of $\cL_n(q)$ and its variants (Corollary~\ref{cor:L-odd}) and the values
    of the series $\mathcal{S}$.
  \end{proof}
}

\begin{proof}[Proof of Theorem~\ref{thm:invol-BCDt-alt}]
By~\cite[Lemma~1.4]{BJN-inv}, a heap encodes an involution if and
only if it is self-dual. Hence we revisit  the proof of
Theorem~\ref{thm:BCDt-alt}, by restricting the enumeration to
self-dual heaps.

Again we begin with the simplest case, which is $\tC_n$, and we
enumerate  self-dual heaps in the five disjoint sets
listed in~\cite[Theorem~3.4]{BJN-long}. The expressions that we obtain are perfect
analogues of those determined in the proof of Theorem~\ref{thm:BCDt-alt}.
  \begin{itemize}
  \item For self-dual alternating heaps over the $(n+1)$-point path, the \gf\ is 
$$
\cL(x) -x \cL_1 -x^2\cL_2,
$$
where $\cL_i$ counts self-dual alternating heaps over the $i$-point
path. 
\item The \gf\ for self-dual  zigzags, derived
  in the proof of~\cite[Prop.~3.4]{BJN-inv}, is
$$
\sum_{n\ge 2} \frac{2 q^{2n+3}}{1-q^2}x^{n+1}=
{\frac {2x^3 q^7}{ \left(1- q^2
 \right)  \left(1- x{q}^{2} \right)}}
.
$$
\item For  self-dual left peaks, 
the \gf\ is the analogue of~\eqref{LP-B}:
\beq\label{inv-LP-B}
{\mathcal {LP}}(x)=\frac{xq^2}{1-xq^2} \left( \cBa(x) -1-xq-x \cBa(x)\right).
\eeq
\item The \gf\ ${\mathcal {RP}}(x)$ of  self-dual right peaks coincides with ${\mathcal {LP}}(x)$.
\item Finally, for self-dual left-right peaks we obtain the self-dual
  analogue of~\eqref{LRP-A}:
\beq\label{inv-LRP-A}
{\mathcal {LRP}}(x)=\left(\frac{xq^2}{1-xq^2}\right)^2 \left(\cA(x)-1-2x\cA(x)+x^2\cA(x)+x-xq\right).
\eeq
  \end{itemize}
 Adding all contributions, {and injecting the
   expressions~\eqref{cA-expr} and~\eqref{cBalt} of $\cA$ and $\cBa$}
gives the
form~\eqref{inv-tilde-form} for $\tilde{\cC}(x,q)$, for the values of
$\cR_0$, $\cL_0$, $\cR$, 
$\cR_1$,  $\cR_2$  and $\cS$  listed above. The value of $\cR_3$ is
different, but one obtains $\cR_3^C$ by subtracting from $\cR_3$ its
terms in $x^1$ and $x^2$.

\medskip
We now move to the $\tB$-case.  
In the proof of
Theorem~\ref{thm:BCDt-alt}, we have described the set of fc elements of
$\tB_{n+1}$ as the disjoint union of sets  $\Delta_t(H)$, for $H$ an
fc element of  $\tC_n$. Now we have to determine, for each $H$, which
elements in $\Delta_t(H)$ are self-dual. Fortunately, the answer is
simple: if $H$ itself is not self-dual, then no element in
$\Delta_t(H)$ is self-dual. If $H$ is self-dual, then  all
elements of $\Delta_t(H)$ are self-dual, except in the two following
cases:
\begin{enumerate}
\item[(i)] if $H$ is the special right peak $ts_1 \cdots
  s_{n-1}u s_{n-1}\cdots s_1 t$, then exactly three of the nine elements of
  $\Delta_t(H)$ are self-dual: two of size $|H|$ and one of size
  $|H|+2$;
\item[(ii)] if $H$ is either alternating or a (non-special) right peak,
  \emm and has a positive even number of points in its first
  column,, then  $\Delta_t(H)$ contains no self-dual heap.
\end{enumerate}
In particular, if $H$ is either alternating or a non-special right peak, the
length \gf\ of self-dual heaps in $\Delta_t(H)$ is $q^{|H|}
a_t(H)$ with
\beq\label{a-t-def}
     a_t(H)=
  \begin{cases}
2 &\hbox{if } H \hbox{ contains an odd number of occurrences of } t,
\hbox{ at least equal to 3,}\\
1 &\hbox{if } H \hbox{ contains no occurrence of } t,
\\
2+q &\hbox{if } H \hbox{ contains exactly one occurrence of } t,
\\
0 & \hbox{otherwise.}
  \end{cases}
\eeq

With these changes in mind, we revisit the derivation of the
series $\tB(x,q)$ performed in the proof of Theorem~\ref{thm:BCDt-alt}, and adapt it to self-dual heaps. 
\begin{itemize}
\item  The contribution of involutions of $\tB_{n+1}$ obtained from self-dual alternating heaps of $\tC_n$ is
  found to be:
\begin{multline*}
  2\left( \cL_{\rm odd}(x)-{x \cL_{\rm odd, 1}-x^2 \cL_{\rm odd, 2}}
\right) +x \left( \cB^{\rm(a)}(x) -1 -x \cB^{\rm(a)}_1\right)
  \\
+q
\left( \cB^{\rm(a)}(x)-1-x\cB^{\rm(a)}_1 -x^2\cB^{\rm(a)}_2 -x\cB^{\rm(a)}(x)+x +{x^2} \cB^{\rm(a)}_1\right),
\end{multline*}
where $\cL_{{\rm odd}, i}$ (resp. $\cB^{\rm(a)}_i$) counts self-dual
alternating heaps (resp.~of type $B$) over the $i$-point path.
{The main difference with~\eqref{Dt-alt} is, in the first term,
  the restriction to heaps with an odd number of points in the first
  column. This also results in a sign change in the second term.}
 
\item The \gf\ of self-dual heaps arising from zigzags is given
  in the proof of~\cite[Prop.~3.4]{BJN-inv} and reads
$$
    \sum_{n\ge 2} x^{n+1} \left( \frac{q^{2n+4}}{1-q} +
  \frac{q^{2(2n+1)}}{1-q^{2n+1}}\right)=
\sum_{n\ge 2} x^{n+1} \frac{q^{2(2n+1)}}{1-q^{2n+1}} 
+{\frac {{x}^{3}{q}^{8} }{ \left(1- q \right)  \left(1- x{q}^{2} \right) }}
.
$$
\item Each self-dual 
left peak $H$ of $\tC_n$ gives rise to one fc involution of
  $\tB_{n+1}$, of size $|H|+1$. The associated \gf\ is thus
$
q {\mathcal {LP}}(x)$, with ${\mathcal {LP}}(x)$ given by~\eqref{inv-LP-B}.
\item  For right peaks, we adapt the derivation of~\eqref{RpDt} to the self-dual
  case, keeping in mind the above restrictions (i) and (ii). We obtain the following \gf:
  \begin{multline}\label{inv-RpDt}
  {\mathcal RP}_{\Delta_t}=  2\,
{\mathcal{RP}_{\rm odd}(x)}
+ \frac{x^2q^2}{1-xq^2} \left(\cA(x)-1-x\cA(x)\right)
\\+\frac{xq^3}{1-xq^2}  \left( \cA(x)-1-x(1+q)  -2x(\cA(x)-1)+x^2\cA(x)\right)
+\left(2 + q^2\right)\sum_{n\ge 2} x^{n+1} q^{2n+1},
  \end{multline}
where {$\mathcal{RP}_{\rm odd}(x)$ counts self-dual right
peaks with an odd number of points in the first column (and at least
three columns). The main difference with~\eqref{RpDt} is, as before, the
restriction on the parity of the first column in the first term. This
results in a sign change in the second term. 

Since self-dual right
peaks with an odd first column are obtained by inserting a $>$-shaped
heap to the right of a  self-dual alternating
heap of type $B$ with an odd first column, which gives:}
$$
\mathcal{RP}_{\rm odd}(x)=
\frac{xq^2}{1-xq^2}\left(\cB_{\rm odd}^{\rm(a)}-xq-x\cB_{\rm
    odd}^{\rm(a)}\right),
$$ 
which should be compared to~\eqref{LP-B}.
\item Finally, for self-dual heaps arising from left-right peaks, we
  obtain the series $q\mathcal{LRP}(x)$, where $\mathcal{LRP}(x)$ is
  given by~\eqref{inv-LRP-A}.
\end{itemize}
Adding all contributions,  {and injecting the
   expressions~\eqref{cA-expr},~\eqref{cBalt} and~\eqref{CBaodd} of
   $\cA$, $\cBa$ and $\cBa_{\rm odd}$}, gives an expression of the
form~\eqref{inv-tilde-form} for the series
$$
\sum_{n\ge 2} \tilde{\cB}_{n+1}^{FC}(q)x^{n+1}= \sum_{n\ge 3} \tilde{\cB}_{n}^{FC}(q)x^{n},
$$
for the values of $\cR_0, \cL_0, \cR,  \cR_1$, $\cR_2$ and $\cS$ listed in
the {theorem}. The value of $\cR_3$ only differs from
$\cR_3^{B}$  by the coefficients of $x^1$ and $x^2$.

\medskip
Let us finally address  the $\tD$-case. As in the proof of
Theorem~\ref{thm:BCDt-alt},
we use the transformation $\Delta_{t,u}$ acting 
on fc heaps $H$
of type $\tC_n$. Again, the description of self-dual heaps found in
$\Delta_{t,u}(H)$ is simple. If $H$ is not self-dual, then no heap of
$\Delta_{t,u}(H)$ will be. If $H$ is self-dual, then all heaps of
$\Delta_{t,u}(H)$  are self-dual, with the following exceptions:
\begin{enumerate}
\item [(i)] if $H$ is the special left peak $us_{n-1}\cdots s_1 t s_1
  \cdots s_{n-1} u$ or the special right peak $ t s_1 \cdots s_{n-1} u$
  $s_{n-1} \cdots s_1 t$, then exactly {three} elements of $\Delta_{t,u}(H)$ are
  self-dual: two of size $|H|+1$, one of size $|H|+3$; 
\item [(ii)] if $H$ is either alternating, or a non-special left peak, or a
  non-special right peak \emm with a positive even number of points it
  its first or last column,, then no heap of $\Delta_{t,u}(H)$ is
  self-dual.
\end{enumerate}
In particular, if $H$ is  alternating,
the \gf\ of self-dual elements in
  $\Delta_{t,u}(H)$ is $q^{|H|}a_t(H)a_u(H)$, where $a_t$ is defined
  by~\eqref{a-t-def}, and $a_u$ is defined symmetrically.

   We now  count self-dual heaps of $\Delta_{t,u}(X)$, for each of
the five families $X$ of self-dual fc heaps of type $\tC_n$ listed above.
\begin{itemize}
\item For heaps arising from self-dual alternating heaps of $\tC_n$, a careful case by
    case study involving the parity of the first and last columns gives the \gf\ as:
    \begin{multline*}
 4 \cL_{\rm odd}^{\rm odd}(x) +x^2\cA(x)+2(2+q)x(1-x) \cA (x) +4x \big(
 \cBa_{\rm odd}(x) - (1-x) \cA(x)\big)\\
+(4q+q^2) (1-x)^2 \cA(x) + 4q \big( (1-x) \cBa_{\rm odd}(x) -(1-x)^2
\cA(x) \big)
+\Pol_2(x),
 \end{multline*}
where $\Pol_2(x)$ is a polynomial in $x$ of degree at most 2. {The first term corresponds to the generic case of a heap with odd
first and right columns. The second (resp. third, fourth, fifth,
sixth) term corrects or completes this contribution for heaps $H$ such that
$\{|H|_t, |H|_u\}= \{0\}$ (resp. $\{0,1\}$, $\{0,i\}$, $\{1\}$,
$\{1,i\}$), with $i\ge 3$ odd.}

\item The \gf\ of self-dual heaps arising from zigzags is given
  {at the end of the proof of}~\cite[Prop.~3.4]{BJN-inv} and reads
  $$
    \sum_{n\ge 2} x^{n+1} \left( \frac{2q^{2n+6}}{1-q^2} +
  \frac{2q^{4n+4}}{1-q^{2n+2}}\right)
\\=
2\sum_{n\ge 3} x^{n} \frac{q^{4n}}{1-q^{2n}} 
+ 2\,{\frac {{x}^{3}{q}^{10}  }{ { \left( 1-x{q}^{2} \right) 
\left( 1-q^2 \right)} }}
.
 $$
\item {If $H$ is a self-dual right peak}, the self-dual elements
  of $\Delta_{t,u}(H)$ are obtained from self-dual elements of
  $\Delta_t(H)$ by replacing their only occurrence of $u$ by $u_1
  u_2$. Hence the \gf\ of heaps of $\tD_{n+2}$ arising from a self-dual right
  peak of $\tC_n$ is $q {\mathcal {RP}}_{\Delta_t}$, where  $ {\mathcal {RP}}_{\Delta_t}$ is
  given by~\eqref{inv-RpDt}.
\item The case of self-dual left peaks is symmetric, and gives another term $q
  {\mathcal {RP}}_{\Delta_t}$.
\item Finally, each self-dual left-right peak $H$ of $\tC_n$ gives rise to one
  fc involution of {$\tD_{n+2}$}, of length $|H|+2$. The associated \gf\ is thus
  $q^2{\mathcal {LRP}}(x)$, with ${\mathcal {LRP}}(x)$ given by~\eqref{inv-LRP-A}.
\end{itemize}
Adding all contributions, {and injecting the
   expressions~\eqref{cA-expr} and~\eqref{CBaodd} of
   $\cA$ and $\cBa_{\rm odd}$},  gives an expression of the form~\eqref{inv-tilde-form} for
the series
$$
\sum_{n\ge 2} \tilde{\cD}_{n+2}^{FC}(q)x^{n+1}= \sum_{n\ge 3} \tilde{\cD}_{n+1}^{FC}(q)x^{n},
$$
with the values of  $\cL_0$, $\cR_0$, $\cR, \cR_1, \cR_2$, $\cS$ given in the
theorem. As before, the value of $\cR_3$ only differs from  $\cR_3^D$ by a polynomial in $x$ of degree 2.
\end{proof}


\begin{thebibliography}{10}

\bibitem{abramov}
S.~A. Abramov.
\newblock Rational solutions of linear difference and {$q$}-difference
  equations with polynomial coefficients.
\newblock {\em Programmirovanie}, 6:3--11, 1995.
\newblock Translation in \emph{Program. Comput. Software}, 6:273--278, 1995.

\bibitem{aval-ppp}
J.-C. Aval, A.~Boussicault, P.~Laborde-Zubieta, and M.~P\'etr\'eolle.
\newblock Generating series of periodic parallelogram polyominoes.
\newblock \href{https://arxiv.org/abs/1612.03759}{arXiv:1612.03759}, 2016.

\bibitem{barcucci}
E.~Barcucci, A.~Del~Lungo, E.~Pergola, and R.~Pinzani.
\newblock Some permutations with forbidden subsequences and their inversion
  number.
\newblock {\em Discrete Math.}, 234(1-3):1--15, 2001.

\bibitem{BBJN-bij}
R.~Biagioli, M.~Bousquet-M\'elou, F.~Jouhet, and P.~Nadeau.
\newblock 321-{A}voiding affine permutations, heaps, and periodic parallelogram
  polyominos.
\newblock In {\em Proceedings of GASCom 2016}, Elect. Notes in Discrete Math.,
  to appear.
\newblock Long version in preparation.

\bibitem{BJN-inv}
R.~Biagioli, F.~Jouhet, and P.~Nadeau.
\newblock Combinatorics of fully commutative involutions in classical {C}oxeter
  groups.
\newblock {\em Discrete Math.}, 338(12):2242--2259, 2015.

\bibitem{BJN-long}
R.~Biagioli, F.~Jouhet, and P.~Nadeau.
\newblock Fully commutative elements in affine and finite {C}oxeter groups.
\newblock {\em Monatsh. Math.}, 178(1):1--37, 2015.

\bibitem{BJS}
S.~C. Billey, W.~Jockusch, and R.~P. Stanley.
\newblock Some combinatorial properties of {S}chubert polynomials.
\newblock {\em J. Algebraic Combin.}, 2(4):345--374, 1993.

\bibitem{bjorner-brenti-book}
A.~Bj{\"o}rner and F.~Brenti.
\newblock {\em Combinatorics of {C}oxeter groups}, volume 231 of {\em Graduate
  Texts in Mathematics}.
\newblock Springer, New York, 2005.

\bibitem{bousquet-eq}
M.~Bousquet-M\'elou.
\newblock Codage des polyominos convexes et \'equations pour l'\'enum\'eration
  suivant l'aire.
\newblock {\em Discrete Appl. Math.}, 48:21--43, 1994.

\bibitem{bousquet-vcd}
M.~Bousquet-M{\'e}lou.
\newblock A method for the enumeration of various classes of column-convex
  polygons.
\newblock {\em Discrete Math.}, 154(1-3):1--25, 1996.

\bibitem{bousquet-fedou}
M.~Bousquet-M\'elou and J.-M. F\'edou.
\newblock The generating function of convex polyominoes: the resolution of a
  $q$-differential system.
\newblock {\em Discrete Math.}, 137:53--75, 1995.

\bibitem{bousquet-viennot}
M.~Bousquet-M\'elou and X.~G. Viennot.
\newblock Empilements de segments et $q$-\'enum\'eration de polyominos convexes
  dirig\'es.
\newblock {\em J. Combin. Theory Ser. A}, 60:196--224, 1992.

\bibitem{boussicault-laborde}
A.~Boussicault and P.~Laborde-Zubieta.
\newblock Periodic parallelogram polyominoes.
\newblock \href{https://arxiv.org/abs/1611.03766}{arXiv:1611.03766}, 2016.

\bibitem{cartierfoata}
P.~Cartier and D.~Foata.
\newblock {\em Probl\`emes combinatoires de commutation et r\'earrangements}.
\newblock Lecture Notes in Mathematics, No. 85. Springer-Verlag, Berlin-New
  York, 1969.

\bibitem{chyzak-salvy}
F.~Chyzak and B.~Salvy.
\newblock Non-commutative elimination in {O}re algebras proves multivariate
  identities.
\newblock {\em J. Symbolic Comput.}, 26(2):187--227, 1998.

\bibitem{delest-fedou-diag-conv-dir}
M.~Delest and J.-M. F\'edou.
\newblock Exact formulas for fully diagonal compact animals.
\newblock Technical Report 89-07, LaBRI, Universit\'e Bordeaux 1, 1989.

\bibitem{delest-fedou}
M.-P. Delest and J.-M. F\'edou.
\newblock Enumeration of skew {F}errers diagrams.
\newblock {\em Discrete Math.}, 112:65--79, 1993.

\bibitem{feretic2}
S.~Fereti{\'c}.
\newblock An alternative method for {$q$}-counting directed column-convex
  polyominoes.
\newblock {\em Discrete Math.}, 210(1-3):55--70, 2000.
\newblock Formal power series and algebraic combinatorics (Minneapolis, MN,
  1996).

\bibitem{feretic}
S.~Fereti{\'c}.
\newblock A {$q$}-enumeration of convex polyominoes by the festoon approach.
\newblock {\em Theoret. Comput. Sci.}, 319(1-3):333--356, 2004.

\bibitem{goulden-jackson}
I.~P. Goulden and D.~M. Jackson.
\newblock {\em Combinatorial enumeration}.
\newblock A Wiley-Interscience Publication. John Wiley \& Sons, Inc., New York,
  1983.
\newblock Wiley-Interscience Series in Discrete Mathematics.

\bibitem{Gre321}
R.~M. Green.
\newblock On 321-avoiding permutations in affine {W}eyl groups.
\newblock {\em J. Algebraic Combin.}, 15(3):241--252, 2002.

\bibitem{HanJon}
C.~R.~H. Hanusa and B.~C. Jones.
\newblock The enumeration of fully commutative affine permutations.
\newblock {\em European J. Combin.}, 31(5):1342--1359, 2010.

\bibitem{humphreys}
J.~E. Humphreys.
\newblock {\em Reflection groups and {C}oxeter groups}, volume~29 of {\em
  Cambridge Studies in Advanced Mathematics}.
\newblock Cambridge University Press, Cambridge, 1990.

\bibitem{JN-periods}
F.~Jouhet and P.~Nadeau.
\newblock Long fully commutative elements in {C}oxeter groups.
\newblock {\em Integers}, 15:Paper A36, 2015.

\bibitem{klarner-rivest-conv}
D.~A. Klarner and R.~L. Rivest.
\newblock Asymptotic bounds for the number of convex $n$-ominoes.
\newblock {\em Discrete Math.}, 8:31--40, 1974.

\bibitem{nadeau}
P.~Nadeau.
\newblock On the length of fully commutative elements.
\newblock \href{http://arxiv.org/abs/1511.08788}{arXiv:1511.08788}, 2015.

\bibitem{owczarek-prellberg-sos}
A.~L. Owczarek and T.~Prellberg.
\newblock Exact solution of the discrete (1+1)-dimensional {SOS} model with
  field and surface interactions.
\newblock {\em J. Statist. Phys.}, 70:1175--1194, 1993.

\bibitem{prellberg-brak}
T.~Prellberg and R.~Brak.
\newblock Critical exponents from nonlinear functional equations for partially
  directed cluster models.
\newblock {\em J. Statist. Phys.}, 78:701--730, 1995.

\bibitem{stanley-vol2}
R.~P. Stanley.
\newblock {\em Enumerative combinatorics $2$}, volume~62 of {\em Cambridge
  Studies in Advanced Mathematics}.
\newblock Cambridge University Press, Cambridge, 1999.

\bibitem{St1}
J.~R. Stembridge.
\newblock On the fully commutative elements of {C}oxeter groups.
\newblock {\em J. Algebraic Combin.}, 5(4):353--385, 1996.

\bibitem{St2}
J.~R. Stembridge.
\newblock Some combinatorial aspects of reduced words in finite {C}oxeter
  groups.
\newblock {\em Trans. Amer. Math. Soc.}, 349(4):1285--1332, 1997.

\bibitem{St3}
J.~R. Stembridge.
\newblock The enumeration of fully commutative elements of {C}oxeter groups.
\newblock {\em J. Algebraic Combin.}, 7(3):291--320, 1998.

\bibitem{temperley}
H.~N.~V. Temperley.
\newblock Combinatorial problems suggested by the statistical mechanics of
  domains and of rubber-like molecules.
\newblock {\em Phys. Rev.}, 103:1--16, 1956.

\bibitem{viennot1}
G.~X. Viennot.
\newblock Heaps of pieces. {I}. {B}asic definitions and combinatorial lemmas.
\newblock In {\em Graph theory and its applications: {E}ast and {W}est
  ({J}inan, 1986)}, volume 576 of {\em Ann. New York Acad. Sci.}, pages
  542--570. New York Acad. Sci., New York, 1989.

\end{thebibliography}

\end{document}